\documentclass[10pt,a4paper,twoside,english]{article}
\usepackage{mathpazo}
\usepackage[T1]{fontenc}
\usepackage[latin9]{inputenc}
\setcounter{tocdepth}{2}
\usepackage{color}
\usepackage{amsthm}
\usepackage{amsmath}
\usepackage{graphicx}
\usepackage{amssymb}

\theoremstyle{plain}
 \theoremstyle{definition}
 \newtheorem*{defn*}{Definition}
\newtheorem{thm}{Theorem}[section]
  \theoremstyle{remark}
  \newtheorem{rem}[thm]{Remark}
  \theoremstyle{plain}
  \newtheorem{prop}[thm]{Proposition}
  \theoremstyle{remark}
  \newtheorem{notation}[thm]{Notation}
  \theoremstyle{plain}
  \newtheorem{cor}[thm]{Corollary}
  \theoremstyle{definition}
  \newtheorem{defn}[thm]{Definition}
  \theoremstyle{plain}
  \newtheorem{lem}[thm]{Lemma}

\usepackage{babel}
\addto\extrasenglish{\providecommand{\fg}{\ifdim\lastskip>\z@\unskip\fi~\frqq}}

\begin{document}

\title{Semi-classical behaviour of Schr\"{o}dinger\'{}s dynamics : revivals of
wave packets on hyperbolic trajectory}

\author{Olivier Labl\'{e}e}

\date{19 July 2010}
\maketitle
\selectlanguage{english}%
\begin{abstract}
The aim of this paper is to study the semi-classical behaviour of
Schr\"{o}dinger\'{}s dynamics for an one-dimensional quantum Hamiltonian
with a classical hyperbolic trajectory. As in the regular case (elliptic
trajectory), we prove, that for an initial wave packets localized
in energy, the dynamics follows the classical motion during short
time. This classical motion is periodic and the period $T_{hyp}$
is order of \foreignlanguage{english}{{\normalsize $\left|\ln h\right|$}}.
And, for large time, a new period $T_{rev}$ for the quantum dynamics
appears : the initial wave packets form again at $t=T_{rev}$. Moreover
for the time $t=\frac{p}{q}T_{rev}$ a fractionnal revivals phenomenon
of the initial wave packets appears : there is a formation of a finite
number of clones of the original wave packet.
\end{abstract}
Schr\"{o}dinger\'{}s dynamics, revivals of wave packets, semi-classical analysis,
hyperbolic trajectory, Schr\"{o}dinger operator with double wells potential.

\tableofcontents{}

\section{Introduction}

\subsection{Context and motivation}

For $P_{h}$ a pseudo-differential operator (here $h>0$ is the semi-classical
parameter), and for $\psi_{0}$ an inital state the quantum dynamics
is governed by the famous Schr\"{o}dinger equation : \begin{eqnarray*}
ih\frac{\partial\psi(t)}{\partial t}=P_{h}\psi(t).\end{eqnarray*}
In this paper, we present a detailed study, in the semi-classical
regime $h\rightarrow0$, of the behaviour of Schr\"{o}dinger's dynamics
for an one-dimensional quantum Hamiltonian \[
P_{h}\,:\, D\left(P_{h}\right)\subset L^{2}\left(\mathbb{R}\right)\rightarrow L^{2}\left(\mathbb{R}\right)\]
with a classical hyperbolic trajectory : the principal symbol $p\in\mathcal{C}^{\infty}(\mathbb{R}^{2},\mathbb{R})$
of $P_{h}$ has a hyperbolic non-degenerate singularity. 

Dynamics in the regular case and for elliptic non-degenerate singularity
have been the subject of many research in physics \textbf{{[}Av-Pe{]},
{[}LAS{]}, {[}Robi1{]}}, \textbf{{[}Robi2{]}, {[}BKP{]}}, \textbf{{[}Bl-Ko{]}
}and, more recently in mathematics \textbf{{[}Co-Ro{]}}, \textbf{{[}Rob{]},
{[}Pau1{]}}, \textbf{{[}Pau2{]}, {[}Lab2{]}}. The strategy to understand
the long times behaviour of dynamics is to use the spectrum of the
operator $P_{h}$. In the regular case, the spectrum of $P_{h}$ is
given by the famous Bohr-Sommerfeld rules (see for example \textbf{{[}He-Ro{]},}
\textbf{{[}Ch-VuN{]}}, \textbf{{[}Col8{]}}) : in first approximation,
the spectrum of $P_{h}$ in a compact set is a sequence of real numbers
with a gap of size $h.$ The classical trajectories are periodic and
supported on elliptic curves.

In the case of hyperbolic singularity we have a non-periodic trajectory
supported on a ''height'' figure (see figure 2). The spectrum near
this singularity is more complicated than in the regular case. Y.
Colin de Verdi\`{e}re and B. Parisse give an implicit singular Bohr-Sommerfeld
rules for hyperbolic singularity (\textbf{{[}Co-Pa1{]}},\textbf{ {[}Co-Pa2{]}
}and\textbf{ {[}Co-Pa3{]}}).\textbf{ }This quantization formula is
too implicit for using it directly in our motivation. In \textbf{{[}Lab3{]}
}we have an explicit description of the spectrum for an one-dimesional
pesudo-differential operator near a hyperbolic non-degenerate singularity.

\subsection{Results}

With above description, we propose a study of quantum dynamics for
large times $\left(\gg\left|\ln h\right|\right).$ We prove that for
a localized initial state, at the begining the dynamics is periodic
with a period equal to \foreignlanguage{english}{$T_{hyp}=C\left|\ln h\right|$}
(see corollary 5.12). This period $T_{hyp}$ corresponds to the classical
Hamiltonian flow period. 

Next for large time scale, a new period $T_{rev}$ of the quantum
dynamics appears : this is the revivals phenomenon (like in regular
case \textbf{{[}Co-Ro{]}}, \textbf{{[}Rob{]}, {[}Pau1{]}}, \textbf{{[}Pau2{]},
{[}Lab2{]}}). For $t=T_{hyp}$ the packet relocalize in the form of
a quantum revival. 

We have also the phenomenon of fractional revivals of initial wave
packets for time $t=\frac{p}{q}T_{rev}$, with $\frac{p}{q}\in\mathbb{Q}$
: there is a formation of a finite number of clones of the original
wave packet $\psi_{0}$ with a constant amplitude (see theorems 6.18,
6.19 \& 6.20) and differing in the phase plane from the initial wave
packet by fractions $\frac{p}{q}T_{hyp}$ (see theorem 6.15).

\subsection{Paper organization}

The paper is organized as follows. In section 2 we give some preliminaries
about the strategy for analyse the dynamics of a quantum Hamiltonian.
In this section we define a simple way to understand the evolution
of $t\mapsto\psi(t)$ by the autocorrelation function :\[
\mathbf{c}(t):=\left|\left\langle \psi(t),\psi_{0}\right\rangle _{\mathcal{H}}\right|.\]
In section 3 we describe the hyperbolic singularities mathematical
context; we also recall the principal theorem of \textbf{{[}Lab3{]}}.
This theorem provides the spectrum of the operator $P_{h}$ near the
singularity. Section 4 is devoted to define an initial wave packets
$\psi_{0}$ localized in energy. In part 5 we prove that the quantum
dynamics follows the classical motion during short time (see corollary
5.12). This classical motion is periodic and the period $T_{hyp}$
is order of \foreignlanguage{english}{$\left|\ln h\right|$}. In the
last part (part 6) we detail the analysis of revivals phenomenon,
see theorem 6.7 for full-revival theorem and see theorem 6.15 for
fractionnal-revivals phenomenon.

\section{Quantum dynamics and autocorrelation function}

\subsection{The quantum dynamics}

For a quantum Hamiltonian $P_{h}\,:\, D\left(P_{h}\right)\subset\mathcal{H}\rightarrow\mathcal{H}$,
$\mathcal{H}$ is a Hilbert space, the Schr\"{o}dinger dynamics is governed
by the Schr\"{o}dinger equation : \begin{eqnarray*}
ih\frac{\partial\psi(t)}{\partial t}=P_{h}\psi(t).\end{eqnarray*}
With the functional calculus, we can reformulate this equation with
the unitary group $U(t)=\left\{ e^{-i\frac{t}{h}P_{h}}\right\} _{t\in\mathbb{R}}.$
Indeed, for a initial state $\psi_{0}\in\mathcal{H}$, the evolution
given by : \begin{eqnarray*}
\psi(t)=U(t)\psi_{0}\in\mathcal{H}.\end{eqnarray*}

\subsection{Return and autocorrelation function}

We now introduce a simple tool to understand the behaviour of the
vector \foreignlanguage{english}{$\psi(t)$} : a quantum analog or
the Poincar\'{e} return function. 
\begin{defn*}
The quantum return functions of the operator $P_{h}$ and for an initial
state $\psi_{0}$ is defined by : \[
\mathbf{r}(t):=\left\langle \psi(t),\psi_{0}\right\rangle _{\mathcal{H}};\]
and the autocorrelation function is defined by :\[
\mathbf{c}(t):=\left|\mathbf{r}(t)\right|=\left|\left\langle \psi(t),\psi_{0}\right\rangle _{\mathcal{H}}\right|.\]

\end{defn*}
The previous function measures the return on initial state. This function
is the overlap of the time dependent quantum state $\psi(t)$ with
the initial state $\psi_{0}.$ Since the initial state $\psi_{0}$
is normalized, the autocorrelation function takes values in the compact
set $[0,1].$ Then, if we have an orthonormal basis of eigenvectors
$\left(e_{n}\right)_{n\in\mathbb{N}}$ : \[
P_{h}e_{n}=\lambda_{n}(h)e_{n}\]
with\[
\lambda_{1}(h)\leq\lambda_{2}(h)\leq\cdots\leq\lambda_{n}(h){\displaystyle \rightarrow}+\infty;\]
we get, for all integer $n$ \[
\left(e^{-i\frac{t}{h}P_{h}}\right)e_{n}=\left(e^{-i\frac{t}{h}\lambda_{n}(h)}\right)e_{n}.\]
So for a initial vector $\psi_{0}\in D(P_{h})\subset\mathcal{H},$
let us denote by $(c_{n})_{n\in\mathbb{N}}=(c_{n}(h))_{n\in\mathbb{N}}$
the sequence of $\ell^{2}(\mathbb{N})$ given $(c_{n})_{n}=\pi\left(\psi_{0}\right)$,
where $\pi$ is the projector (unitary operator) : \[
\pi:\left\{ \begin{array}{cc}
\mathcal{H}\rightarrow\ell^{2}(\mathbb{N})\\
\\\psi\mapsto<\psi,e_{n}>_{\mathcal{H}}.\end{array}\right.\]
Then, for all $t\geq0$ we have \begin{eqnarray*}
\psi(t)=U(t)\psi_{0}=\left(e^{-i\frac{t}{h}P_{h}}\right)\left({\displaystyle \sum_{n\in\mathbb{N}}c_{n}e_{n}}\right)\end{eqnarray*}
\[
={\displaystyle \sum_{n\in\mathbb{N}}c_{n}e^{-i\frac{t}{h}\lambda_{n}(h)}e_{n}}.\]
So, for all $t\geq0$ we obtain \[
\mathbf{r}(t)={\displaystyle \sum_{n\in\mathbb{N}}\left|c_{n}\right|^{2}e^{-i\frac{t}{h}\lambda_{n}(h)}};\;\mathbf{c}(t)=\left|\sum_{n\in\mathbb{N}}\left|c_{n}\right|^{2}e^{-i\frac{t}{h}\lambda_{n}(h)}\right|.\]

\subsection{Strategy for study the autocorrelation function }

The strategy, performed by the physicists (\textbf{{[}Av-Pe{]}, {[}LAS{]},
{[}Robi1{]}}, \textbf{{[}Robi2{]}, {[}BKP{]}}, \textbf{{[}Bl-Ko{]}})
is the following :
\begin{enumerate}
\item We define a initial vector $\psi_{0}={\displaystyle \sum_{n\in\mathbb{N}}c_{n}e_{n}}$
localized in the following sense : the sequence \foreignlanguage{english}{$(c_{n})_{n\in\mathbb{N}}$}
is localized close to a quantum number $n_{0}$ (depends on $h$ and
a energy level $E\in\mathbb{R}$).
\item Next, the idea is to expand by a Taylor formula's the eigenvalues
\foreignlanguage{english}{$\lambda_{n}(h)$} around the energy level
$E$ :\[
\lambda_{n}(h)=\lambda_{n_{0}}(h)+\lambda_{n_{0}}^{\prime}(h)\left(n-n_{0}\right)+\frac{\lambda_{n_{0}}^{\prime\prime}(h)}{2}\left(n-n_{0}\right)^{2}+\frac{\lambda_{n_{0}}^{(3)}(h)}{6}\left(n-n_{0}\right)^{3}+\cdots\]
(here $\lambda_{n_{0}}(h)$ is the closest eigenvalue to $E$), hence
we get for all $t\geq0$ \[
\mathbf{c}(t)=\left|\sum_{n\in\mathbb{N}}\left|c_{n}\right|^{2}e^{-it\left[\frac{\lambda_{n_{0}}^{\prime}(h)}{h}\left(n-n_{0}\right)+\frac{\lambda_{n_{0}}^{\prime\prime}(h)}{2h}\left(n-n_{0}\right)^{2}+\frac{\lambda_{n_{0}}^{(3)}(h)}{6h}\left(n-n_{0}\right)^{3}+\cdots\right]}\right|.\]

\item And, for small values of $t$, the first approximation of the autocorrelation
function $\mathbf{c}(t)$ is the function \[
\mathbf{c}_{1}(t):=\left|\sum_{n\in\mathbb{N}}\left|c_{n}\right|^{2}e^{-it\frac{\lambda_{n_{0}}^{\prime}(h)}{h}\left(n-n_{0}\right)}\right|;\]
and for larger values of $t$, the order 2-approximation is\[
\mathbf{c}_{2}(t):=\left|\sum_{n\in\mathbb{N}}\left|c_{n}\right|^{2}e^{-it\left[\frac{\lambda_{n_{0}}^{\prime}(h)}{h}\left(n-n_{0}\right)+\frac{\lambda_{n_{0}}^{\prime\prime}(h)}{2h}\left(n-n_{0}\right)^{2}\right]}\right|.\]

\end{enumerate}
In section 5 we study the function $t\mapsto\mathbf{c}_{1}(t)$ and
in section 6 we study $t\mapsto\mathbf{c}_{2}(t).$

\section{The context of hyperbolic singularity}

\subsection{Link between spectrum and geometry : semi-classical analysis }

For explain the philosophy of semi-classical analysis start by an
example : for a real number $E>0$; the equation\[
-\frac{h^{2}}{2}\Delta_{g}\varphi=E\varphi\]
(where $\Delta_{g}$ denotes the Laplace-Beltrami operator on a Riemaniann
manifold $(M,g)$) admits the eigenvectors $\varphi_{k}$ as solution
if\[
-\frac{h^{2}}{2}\lambda_{k}=E.\]
Hence if $h\rightarrow0^{+}$ then $\lambda_{k}\rightarrow+\infty$.
So there exists a correspondence between the semi-classical limit
($h\rightarrow0^{+}$) and large eigenvalues. 

The asympotic's of large eigenvalues for the Laplace-Beltrami operator
$\Delta_{g}$ on a Riemaniann manifold $(M,g)$, or more generally
for a pseudo-differential operator $P_{h}$, is linked to a symplectic
geometry : the phase space geometry. This is the same phenomenon between
quantum mechanics (spectrum, operator algebra) and classical mechanics
(length of periodic geodesics, symplectic geometry). More precisely,
for a pseudo-differential operator $P_{h}$ on $L^{2}(M)$ with a
principal symbol $p\in\mathcal{C}^{\infty}\left(T^{\star}M\right)$,
there exist a link between the geometry of the foliation $\left(p^{-1}(\lambda)\right)_{\lambda\in\mathbb{R}}$
and the spectrum of the operator $P_{h}$. Indeed, we have the famous
result :\[
\left(P_{h}-\lambda I_{d}\right)u_{h}=O(h^{\infty})\]
then\[
MS(u_{h})\subset p^{-1}(\lambda);\]
where $MS(u_{h})\subset T^{\star}M$ denote the microsupport of the
function $u_{h}\in L^{2}(M)$.

\subsection{Hyperbolic singularity}

In dimension one, a point $(x_{0},\xi_{0})\in T^{\star}M$ is a non-degenerate
hyperbolic singularity of the symbol function $p\in\mathcal{C}^{\infty}(T^{\star}M)$
if and only if :
\begin{enumerate}
\item $dp(x_{0},\xi_{0})=0$;
\item the eigenvalues of the Hessian matrix $\nabla^{2}p(x_{0},\xi_{0})$
are pairwise distinct;
\item if, in some local symplectic coordinates $(x,\xi)$ the algebra spanned
by $\nabla^{2}p(x_{0},\xi_{0})$ has a basis of the form $q=x\xi.$ \end{enumerate}
\begin{rem}
There exists analogue definition for completely integrable systems,
see for example the book of San V\~u Ng\d oc \textbf{{[}VuN{]}}.
\end{rem}
The canonical example in dimension 1 is the Schr\"{o}dinger operator with
double wells potential : \[
{\displaystyle P_{h}}=-\frac{h^{2}}{2}\Delta+V;\]
we assume $V\in\mathcal{C}^{\infty}(\mathbb{R}),\,$ for all $x\in\mathbb{R},\, V(x)\geq C$
and ${\displaystyle \lim_{|x|\rightarrow\infty}V(x)=+\infty}$. Here
the principal symbol of $P_{h}$ is the function \[
p(x,\xi)=\frac{\xi^{2}}{2}+V(x).\]
With the previous hypotheses on the potential $V$, the operator $P_{h}$
is self-adjoint, the spectrum is a sequence of real numbers $\left(\lambda_{n}(h)\right)_{n\geq0}$
and the eigenvectors $\left(e_{n}\right)_{n\geq0}$ be an orthonormal
basis of the Hilbert space $L^{2}(\mathbb{R})$ (for example see the
survey\textbf{ {[}Lab1{]}}). We also suppose that the potential $V$
admits exactly one local non-degenerate maximum. Without loss generality,
we may suppose \[
V(0)=0;\, V'(0)=0;\, V''(0)<0.\]
Then the foliation associated to the principal symbol $p(x,\xi)=\xi^{2}/2+V(x)$
admits a singular fiber $\Lambda_{0}=p^{-1}(0).$

\selectlanguage{english}%
\begin{center}
\includegraphics[scale=0.33]{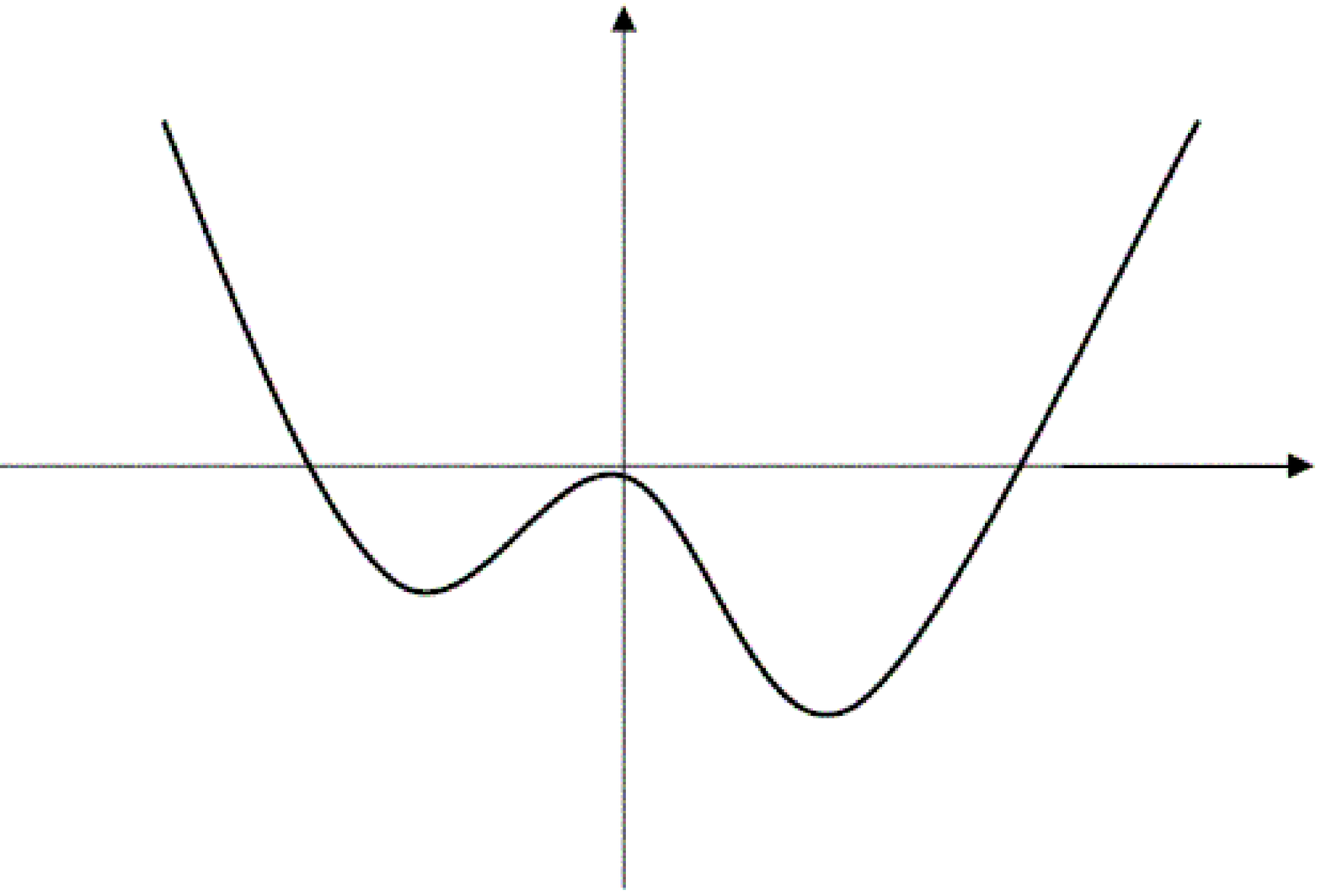}\includegraphics{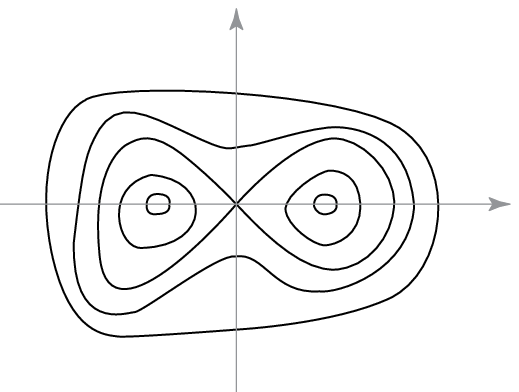}
\par\end{center}

\selectlanguage{english}%
\begin{center}
\textit{Fig. 1. \& 2. On figure 1 (left) the potential function $V(x).$
On figure 2 (right) the associated foliation in the phase plane.}
\par\end{center}

\vspace{0.25cm}

\subsection{Spectrum near the singularity}

In \textbf{{[}Co-Pa1{]}},\textbf{ {[}Co-Pa2{]}} and\textbf{ {[}Co-Pa3{]},}
Y. Colin de Verdi\`{e}re and B. Parisse gives an implicit singular Bohr-Sommerfeld
rules. An explicit description is given in \textbf{{[}Lab3{]}}. Record
here this\textbf{ }description. We take the presentation from the
paper \textbf{{[}Lab3{]} }and refer to this paper for more details.

\subsubsection{Some notations (see {[}Lab3{]})}

In \textbf{{[}Co-Pa1{]}},\textbf{ {[}Co-Pa2{]}},\textbf{ {[}Co-Pa3{]}}
and \textbf{{[}Lab3{]}} we have the smooth function $E\mapsto\varepsilon(E)$
defined for $|E|\leq\delta$, where $\delta$ is a real constant and
not depends to $h$; (this function is linked to a normal form \textbf{{[}CLP{]},
{[}Co-Pa1{]}}) defined by 

\[
\varepsilon(E)=\sum_{j=0}^{+\infty}\varepsilon_{j}(E)h^{j}\]
We have (see \textbf{{[}Co-Pa1{]}},\textbf{ {[}Co-Pa2{]}},\textbf{
{[}Co-Pa3{]}} and \textbf{{[}Lab3{]}}) the equality $\varepsilon_{0}(0)=0$
and $\varepsilon_{0}^{\prime}(0)=1/\sqrt{-V^{''}(0)}$. Hence, if
we use the Taylor formula on the smooth function $\varepsilon_{0}$
; for all $E\in[-\delta,\delta]$ we get\[
\varepsilon(E)=\frac{E}{\sqrt{-V^{''}(0)}}+O(E^{2})+\sum_{j=1}^{+\infty}\varepsilon_{j}(E)h^{j}.\]
So, for $\lambda\in[-1,1]$; and $h$ small enough (for have $[-h,h]\subset[-\delta,\delta])$
we get 

\[
\varepsilon(\lambda h)=\frac{\lambda h}{\sqrt{-V^{''}(0)}}+O(h^{2})+\sum_{j=1}^{+\infty}\varepsilon_{j}(\lambda h)h^{j}.\]
In the papers\textbf{ {[}Co-Pa1{]}},\textbf{ {[}Co-Pa2{]}},\textbf{
{[}Co-Pa3{]}} and \textbf{{[}Lab3{]}} we also use two smooth functions
$S^{+}$ and $S^{-}$ :

\[
S^{+/-}(E)={\displaystyle \sum_{j=0}^{+\infty}S_{j}^{+/-}(E)h^{j};}\]
where the functions are \foreignlanguage{english}{$E\rightarrow S_{j}^{+/-}(E)$
are }$\mathcal{C}^{\infty}$ smooth. This functions are usually called
singular actions. Let us also denote $\theta_{+/-}(E):=S^{+/-}(E)/h.$
This functions have a holonomy interpretation. In \textbf{{[}Lab3{]}}
we consider the functions $E\mapsto F_{h}(E)$ and $E\mapsto G_{h}(E)$
defined on the compact set $\left[-\delta,\delta\right]$ by : \textit{\[
F_{h}(E):=-\frac{\theta_{+}(E)+\theta_{-}(E)}{2}+\frac{\pi}{2}+\frac{\varepsilon(E)}{h}\ln(h)+\arg\left(\Gamma\left(\frac{1}{2}+i\frac{\varepsilon(E)}{h}\right)\right);\]
}$\Gamma$ is the Gamma function, and by \textit{\[
G_{h}(E):=\frac{\theta_{+}(E)-\theta_{-}(E)}{2}.\]
}On the compact set $\left[-1,1\right]$, let us consider the functions
$\lambda\mapsto f_{h}(\lambda)$ and $\lambda\mapsto g_{h}(\lambda)$
defined by : \textit{\[
f_{h}(\lambda):=F_{h}(\lambda h);\;\, g_{h}(\lambda):=G_{h}(\lambda h).\]
}For finish, let us consider the functions $\lambda\mapsto\mathcal{Y}_{h}(\lambda)$
and $\lambda\mapsto\mathcal{Z}_{h}(\lambda)$ defined on the compact
set $\left[-1,1\right]$ by \[
\mathcal{Y}_{h}(\lambda):=f_{h}(\lambda)-\arccos\left(\frac{\cos\left(g_{h}(\lambda)\right)}{\sqrt{1+\exp\left(2\pi\varepsilon(\lambda h)/h\right)}}\right);\]
\[
\mathcal{Z}_{h}(\lambda):=f_{h}(\lambda)+\arccos\left(\frac{\cos\left(g_{h}(\lambda)\right)}{\sqrt{1+\exp\left(2\pi\varepsilon(\lambda h)/h\right)}}\right).\]
We have the following result (see \textbf{{[}Lab3{]}}) :
\begin{prop}
For $h$ small enough, the function $\mathcal{Y}_{h}$ (resp. $\mathcal{Z}_{h}$)
is a bijection from $\left[-1,1\right]$ onto $\mathcal{Y}_{h}\left(\left[-1,1\right]\right)$
(resp. onto $\mathcal{Z}_{h}\left(\left[-1,1\right]\right)$). Moreover
on the compact set $\left[-1,1\right]$ we have\[
\mathcal{Y}_{h}^{\prime}(\lambda)=\frac{\ln(h)}{\sqrt{-V^{''}(0)}}+O(1).\]
Similary for the function $\mathcal{Z}_{h}$.
\end{prop}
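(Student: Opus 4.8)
The plan is to prove the two assertions — monotonicity (hence injectivity) and the asymptotic for the derivative — together, by differentiating the explicit formulas for $\mathcal{Y}_h$ and $\mathcal{Z}_h$ and showing that the leading behaviour is dominated by the single term $f_h'(\lambda)$. First I would compute $f_h'(\lambda)$. Since $f_h(\lambda)=F_h(\lambda h)$, the chain rule gives $f_h'(\lambda)=h\,F_h'(\lambda h)$, and one must differentiate each of the four terms in $F_h$. The terms $-\tfrac12(\theta_+ + \theta_-)$ and $\tfrac{\pi}{2}$ contribute $O(1)$ after multiplication by $h$, because $\theta_{\pm}(E)=S^{\pm}(E)/h$ with $S^{\pm}$ smooth in $E$ and admitting an asymptotic expansion in $h$, so $h\,\tfrac{d}{dE}\theta_{\pm}(\lambda h)=(S^{\pm})'(\lambda h)=O(1)$. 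The term $\tfrac{\varepsilon(E)}{h}\ln h$ is the one that produces the logarithm: using $\varepsilon(E)=\tfrac{E}{\sqrt{-V''(0)}}+O(E^2)+\sum_{j\ge1}\varepsilon_j(E)h^j$, we get $\tfrac{d}{dE}\varepsilon(\lambda h)=\tfrac{1}{\sqrt{-V''(0)}}+O(h)$, so $h\cdot\tfrac{1}{h}\ln(h)\cdot\tfrac{d}{dE}\varepsilon(\lambda h)=\tfrac{\ln h}{\sqrt{-V''(0)}}+O(h\ln h)$. Finally the $\arg\Gamma$ term: differentiating $\arg\Gamma\!\left(\tfrac12+i\tfrac{\varepsilon(\lambda h)}{h}\right)$ brings down $\tfrac{d}{d\lambda}\tfrac{\varepsilon(\lambda h)}{h}=O(1)$ times $\operatorname{Im}\psi\!\left(\tfrac12+i\tfrac{\varepsilon(\lambda h)}{h}\right)$, where $\psi$ is the digamma function; since $\operatorname{Im}\psi(\tfrac12+iy)=O(\ln(2+|y|))$ and here $y=\varepsilon(\lambda h)/h=O(1)$ on $[-1,1]$, this whole term is $O(1)$. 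Collecting, $f_h'(\lambda)=\dfrac{\ln h}{\sqrt{-V''(0)}}+O(1)$.

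Next I would handle the $\arccos$ correction term common to both $\mathcal{Y}_h$ and $\mathcal{Z}_h$, namely $A_h(\lambda):=\arccos\!\left(\dfrac{\cos(g_h(\lambda))}{\sqrt{1+\exp(2\pi\varepsilon(\lambda h)/h)}}\right)$. The key point is that its argument stays bounded away from $\pm1$: since $\varepsilon(\lambda h)/h=\tfrac{\lambda}{\sqrt{-V''(0)}}+O(h)$ is bounded uniformly on $[-1,1]$, the denominator $\sqrt{1+\exp(2\pi\varepsilon(\lambda h)/h)}$ lies in a fixed compact subinterval of $(1,\infty)$, so the fraction lies in a fixed compact subinterval of $(-1,1)$ and $\arccos$ is smooth with bounded derivative there. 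Moreover $g_h'(\lambda)=h\,G_h'(\lambda h)=\tfrac12\big((S^+)'(\lambda h)-(S^-)'(\lambda h)\big)=O(1)$ and $\tfrac{d}{d\lambda}\exp(2\pi\varepsilon(\lambda h)/h)=O(1)$ by the same computation as above, so $A_h'(\lambda)=O(1)$ uniformly in $\lambda\in[-1,1]$. Therefore
\[
\mathcal{Y}_h'(\lambda)=f_h'(\lambda)-A_h'(\lambda)=\frac{\ln h}{\sqrt{-V''(0)}}+O(1),\qquad \mathcal{Z}_h'(\lambda)=f_h'(\lambda)+A_h'(\lambda)=\frac{\ln h}{\sqrt{-V''(0)}}+O(1),
\]
which is the stated derivative asymptotic for both functions.

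Finally, injectivity follows: for $h$ small enough, $\tfrac{\ln h}{\sqrt{-V''(0)}}\to-\infty$ dominates the $O(1)$ remainder, so $\mathcal{Y}_h'(\lambda)$ (and $\mathcal{Z}_h'(\lambda)$) has constant sign on $[-1,1]$; hence $\mathcal{Y}_h$ is strictly monotone, thus a bijection from $[-1,1]$ onto its image, and likewise for $\mathcal{Z}_h$. The main obstacle — and the step deserving the most care — is the uniform control of the $\arg\Gamma$ term and of the $\arccos$ correction: one must check that the argument $\varepsilon(\lambda h)/h$ really is bounded uniformly over $\lambda\in[-1,1]$ (this is exactly why the rescaling $\lambda\mapsto\lambda h$ was introduced, and why the constant $\varepsilon_0'(0)=1/\sqrt{-V''(0)}$ enters), and that the tail $\sum_{j\ge1}\varepsilon_j(\lambda h)h^j$ and the analogous tails in $S^{\pm}$ differentiate term-by-term with the expected $O(h)$-type bounds. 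Granting the smoothness and asymptotic-expansion properties of $\varepsilon$, $S^{\pm}$ recalled above (from \textbf{[Co-Pa1]}, \textbf{[Lab3]}), the rest is the bookkeeping sketched here.
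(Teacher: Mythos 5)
Your argument is correct and is exactly the computation one expects: the paper itself does not prove this proposition (it is quoted from \textbf{[Lab3]}), but your term-by-term differentiation — the $\varepsilon(E)\ln(h)/h$ term producing the $\ln(h)/\sqrt{-V''(0)}$ leading part, the singular actions and the $\arg\Gamma$ and $\arccos$ corrections contributing $O(1)$ because $\varepsilon(\lambda h)/h$ stays bounded on $[-1,1]$ — is precisely the method the paper applies to the second and third derivatives in the proofs of Lemmas 5.1 and 6.1, and monotonicity for small $h$ then gives the bijection. The only slip is cosmetic: differentiating $\arg\Gamma=\operatorname{Im}\ln\Gamma$ in $\lambda$ yields $\varepsilon'(\lambda h)\operatorname{Re}\Psi\left(\tfrac12+i\varepsilon(\lambda h)/h\right)$ rather than an $\operatorname{Im}\Psi$ term, which does not affect the $O(1)$ bound.
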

Since the functions $\mathcal{Y}_{h}$ et $\mathcal{Z}_{h}$ are bijectives,
we can consider \[
\mathcal{A}_{h}:=\mathcal{Y}_{h}^{-1}\,:\,\mathcal{Y}_{h}\left([-1,1]\right)\rightarrow[-1,1];\]
\[
\mathcal{B}_{h}:=\mathcal{Z}_{h}^{-1}\,:\,\mathcal{Z}_{h}\left([-1,1]\right)\rightarrow[-1,1].\]

\begin{notation}
Let us denotes :\[
I_{h}:=\left\{ k\in\mathbb{Z},\,2\pi k\in\mathcal{Y}_{h}\left([-1,1]\right)\right\} =\frac{\mathcal{Y}_{h}\left([-1,1]\right)}{2\pi}\cap\mathbb{Z};\]
\[
J_{h}:=\left\{ \ell\in\mathbb{Z},\,2\pi\ell\in\mathcal{Z}_{h}\left([-1,1]\right)\right\} =\frac{\mathcal{Z}_{h}\left([-1,1]\right)}{2\pi}\cap\mathbb{Z}.\]

\end{notation}

\subsubsection{The main theorem}

The main theorem of the paper \textbf{{[}Lab3{]}} is the the following
:
\begin{thm}
\textbf{{[}Lab3{]}}. The semi-classical spectrum of ${\displaystyle P_{h}}$
in the compact set $\left[-\sqrt{h},\sqrt{h}\right]$ is the disjoint
union \[
\left(\alpha_{k}(h)\right)_{k\in\mathbf{I}_{h}}\bigsqcup\left(\beta_{\ell}(h)\right)_{\ell\in\mathbf{J}_{h}}\]
of two families $\left(\alpha_{k}(h)\right)_{k}$ and $\left(\beta_{\ell}(h)\right)_{\ell}$
such that $\alpha_{k}(h):=h\mathcal{A}_{h}(2\pi k)\in\mathbb{R},\,\beta_{\ell}(h):=h\mathcal{B}_{h}(2\pi\ell)\in\mathbb{R}$.
The functions $\mathcal{A}_{h}$ and $\mathcal{B}_{h}$ are $\mathcal{C}^{\infty}$
smooth. The families $\left(\alpha_{k}(h)\right)_{k}$, $\left(\beta_{\ell}(h)\right)_{\ell}$
are strictly non-increasing and : \[
\beta_{k+1}(h)<\alpha_{k}(h)<\beta_{k}(h)<\alpha_{k-1}(h).\]
Moreover, the spectral gap is of order $O(h/\left|\ln(h)\right|)$;
e.g : there exists $C,C^{\prime}>0$ such that : \[
\frac{Ch}{\left|\ln(h)\right|}\leq\left|\alpha_{k+1}(h)-\alpha_{k}(h)\right|\leq\frac{C^{\prime}h}{\left|\ln(h)\right|},\,\frac{Ch}{\left|\ln(h)\right|}\leq\left|\beta_{k+1}(h)-\beta_{k}(h)\right|\leq\frac{C^{\prime}h}{\left|\ln(h)\right|}.\]
\end{thm}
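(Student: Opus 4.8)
The plan is to read the statement off the implicit singular Bohr--Sommerfeld quantization rules of Colin de Verdi\`ere--Parisse, repackaged through the functions of \S 3.3.1, and then to extract every quantitative claim from Proposition 3.2. First, microlocally near the saddle $(0,0)$ one conjugates $P_{h}$, by an elliptic $h$-Fourier integral operator, to a function of the model operator $\widehat{x\xi}=\frac{h}{i}\bigl(x\partial_{x}+\tfrac12\bigr)$, the spectral parameter being carried by $\varepsilon(E)$; this is the normal form of \textbf{[CLP]}, \textbf{[Co-Pa1]}. The four branches of $\Lambda_{0}=p^{-1}(0)$ meeting at $(0,0)$ are joined by an explicit transfer matrix whose entries involve $\Gamma\bigl(\tfrac12+i\varepsilon(E)/h\bigr)$ and the factor $(1+e^{2\pi\varepsilon(E)/h})^{-1/2}$ --- this is precisely where the $\arg\Gamma$ term of $F_{h}$ and the square root in the definitions of $\mathcal{Y}_{h},\mathcal{Z}_{h}$ originate --- while going once around each of the two loops of $\Lambda_{0}$ contributes the holonomies $e^{i\theta_{\pm}(E)}=e^{iS^{\pm}(E)/h}$ and a Maslov phase. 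The existence of a globally $L^{2}$ microlocal solution is then equivalent, modulo $O(h^{\infty})$, to the vanishing of a $2\times2$ determinant built from that transfer matrix and these holonomies; expanding the determinant and using the addition formula for the cosine, the condition collapses, in the rescaled spectral variable $\lambda$ of \S 3.3.1 and up to an error $O(h^{\infty})$, to
\[
\cos\bigl(f_{h}(\lambda)\bigr)\sqrt{1+e^{2\pi\varepsilon(\lambda h)/h}}=\cos\bigl(g_{h}(\lambda)\bigr),
\]
the $+\pi/2$ and the $\arg\Gamma$ having been absorbed into $f_{h}$. Since the cosine is even, the solution set of this equation splits into the two branches $\mathcal{Y}_{h}(\lambda)\in2\pi\mathbb{Z}$ and $\mathcal{Z}_{h}(\lambda)\in2\pi\mathbb{Z}$; applying the inverses $\mathcal{A}_{h}=\mathcal{Y}_{h}^{-1}$ and $\mathcal{B}_{h}=\mathcal{Z}_{h}^{-1}$ furnished by Proposition 3.2 produces the two families $\alpha_{k}(h)=h\mathcal{A}_{h}(2\pi k)$ $(k\in I_{h})$ and $\beta_{\ell}(h)=h\mathcal{B}_{h}(2\pi\ell)$ $(\ell\in J_{h})$, exhausting the spectrum in the compact set of the statement. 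This identification is the substance of \textbf{[Lab3]} (built on \textbf{[Co-Pa1]}--\textbf{[Co-Pa3]}), and I would simply quote it; the delicate point there, and the main obstacle of the whole argument, is the control of the $O(h^{\infty})$ remainders, i.e.\ showing that the resulting set is \emph{exactly} the spectrum near $0$, with no eigenvalue missed or counted twice. Everything that follows is elementary once Proposition 3.2 is in hand.

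For the smoothness and monotonicity: by the Taylor expansion of $\varepsilon$ recalled in \S 3.3.1, the ratio $\varepsilon(\lambda h)/h$ is bounded uniformly for $\lambda\in[-1,1]$ and $h$ small, so the denominator $\sqrt{1+e^{2\pi\varepsilon(\lambda h)/h}}$ stays bounded and bounded away from $1$; hence the argument $\cos(g_{h}(\lambda))/\sqrt{1+e^{2\pi\varepsilon(\lambda h)/h}}$ of the $\arccos$ remains in a fixed compact subinterval of $(-1,1)$, and $\mathcal{Y}_{h},\mathcal{Z}_{h}$ are $\mathcal{C}^{\infty}$ on $[-1,1]$ (being built from the smooth functions $\varepsilon$, $S^{\pm}$, $\arg\Gamma(\tfrac12+i\,\cdot\,)$ and $\arccos$). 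By Proposition 3.2 their derivatives equal $\ln(h)/\sqrt{-V''(0)}+O(1)$, which for $h$ small is negative and nonvanishing, so $\mathcal{Y}_{h},\mathcal{Z}_{h}$ are strictly decreasing diffeomorphisms onto their images; the inverse function theorem then gives $\mathcal{A}_{h},\mathcal{B}_{h}\in\mathcal{C}^{\infty}$ and strictly decreasing, whence each family $(\alpha_{k})_{k}$, $(\beta_{\ell})_{\ell}$ is strictly decreasing.

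For disjointness and interlacing the key pointwise estimate is $0<\mathcal{Z}_{h}(\lambda)-\mathcal{Y}_{h}(\lambda)=2\arccos\bigl(\cos(g_{h}(\lambda))/\sqrt{1+e^{2\pi\varepsilon(\lambda h)/h}}\bigr)<2\pi$, valid because that $\arccos$ takes values in the open interval $(0,\pi)$. If $\alpha_{k}(h)=\beta_{\ell}(h)$ arose from a common $\lambda$, then $2\pi(\ell-k)=\mathcal{Z}_{h}(\lambda)-\mathcal{Y}_{h}(\lambda)\in(0,2\pi)$, which is impossible, so the union is disjoint. For the interlacing put $a=\mathcal{A}_{h}(2\pi k)$ and $b=\mathcal{B}_{h}(2\pi k)$: from $\mathcal{Z}_{h}(a)>\mathcal{Y}_{h}(a)=2\pi k=\mathcal{Z}_{h}(b)$ and $\mathcal{Z}_{h}$ decreasing one gets $a<b$, i.e.\ $\alpha_{k}(h)<\beta_{k}(h)$; from $\mathcal{Y}_{h}(b)>\mathcal{Z}_{h}(b)-2\pi=2\pi(k-1)=\mathcal{Y}_{h}\bigl(\mathcal{A}_{h}(2\pi(k-1))\bigr)$ and $\mathcal{Y}_{h}$ decreasing one gets $b<\mathcal{A}_{h}(2\pi(k-1))$, i.e.\ $\beta_{k}(h)<\alpha_{k-1}(h)$; replacing $k$ by $k+1$ gives $\beta_{k+1}(h)<\alpha_{k}(h)$, and altogether $\beta_{k+1}(h)<\alpha_{k}(h)<\beta_{k}(h)<\alpha_{k-1}(h)$.

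Finally, for the spectral gap the mean value theorem gives
\[
\alpha_{k+1}(h)-\alpha_{k}(h)=h\bigl(\mathcal{A}_{h}(2\pi(k+1))-\mathcal{A}_{h}(2\pi k)\bigr)=2\pi h\,\mathcal{A}_{h}^{\prime}(\mu_{k}),\qquad\mathcal{A}_{h}^{\prime}=\frac{1}{\mathcal{Y}_{h}^{\prime}\circ\mathcal{A}_{h}},
\]
for some $\mu_{k}\in(2\pi k,2\pi(k+1))$, and by Proposition 3.2 the quantity $|\mathcal{Y}_{h}^{\prime}|=|\ln(h)|/\sqrt{-V''(0)}+O(1)$ is comparable to $|\ln(h)|$ uniformly on $[-1,1]$; hence $|\mathcal{A}_{h}^{\prime}|$ is comparable to $1/|\ln(h)|$, which yields constants $C,C'>0$ with $Ch/|\ln(h)|\le|\alpha_{k+1}(h)-\alpha_{k}(h)|\le C'h/|\ln(h)|$. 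The same computation with $\mathcal{B}_{h}$ and $\mathcal{Z}_{h}$ gives the estimate for the $\beta$'s, which completes the proof.
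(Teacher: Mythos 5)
The paper gives no proof of this theorem at all: it is imported verbatim from \textbf{[Lab3]} (itself resting on \textbf{[Co-Pa1]}--\textbf{[Co-Pa3]}), so there is nothing internal to compare your argument against. Your proposal has the right architecture, and its two halves should be judged separately. The first paragraph --- normal form at the saddle, transfer matrix with the $\Gamma(\tfrac12+i\varepsilon/h)$ entries, holonomies $e^{i\theta_{\pm}}$, vanishing of a $2\times2$ determinant collapsing to $\cos(f_{h}(\lambda))\sqrt{1+e^{2\pi\varepsilon(\lambda h)/h}}=\cos(g_{h}(\lambda))$ and hence to $\mathcal{Y}_{h}(\lambda)\in2\pi\mathbb{Z}$ or $\mathcal{Z}_{h}(\lambda)\in2\pi\mathbb{Z}$ --- is a faithful description of the mechanism, but it is a citation, not a proof; you say so yourself, and you correctly identify the genuinely hard point (exactness of the quantization condition modulo $O(h^{\infty})$, no eigenvalue missed or doubled) as the content of the reference. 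That is exactly the same logical status as the paper's own treatment. The second half, by contrast, is a genuine and correct derivation that the paper omits: granting Proposition 3.2 and the fact that $\varepsilon(\lambda h)/h$ stays bounded on $\lambda\in[-1,1]$, your arguments for smoothness, strict monotonicity (sign of $\mathcal{Y}_{h}^{\prime}$), disjointness and interlacing (via $0<\mathcal{Z}_{h}-\mathcal{Y}_{h}<2\pi$ and monotonicity), and the two-sided gap estimate (mean value theorem plus $|\mathcal{Y}_{h}^{\prime}|\asymp|\ln h|$) are all sound. One caveat worth recording: Proposition 3.2 and the functions $\mathcal{Y}_{h},\mathcal{Z}_{h}$ are only set up for $\lambda\in[-1,1]$, i.e.\ for energies in $[-h,h]$, whereas the theorem is stated on $[-\sqrt{h},\sqrt{h}]$; on that larger window $\varepsilon(E)/h$ is no longer bounded and your ``bounded away from $1$'' argument for the denominator fails as stated, so your elementary derivations literally establish the claims only on the sub-window $[-h,h]$ --- which, as the paper notes immediately after the theorem, is the only window actually used in the sequel.
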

\begin{cor}
The number of eigenvalues in the compact set $\left[-\sqrt{h},\sqrt{h}\right]$
is of order $\left|\ln(h)\right|/\sqrt{h}$.
\end{cor}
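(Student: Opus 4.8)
The plan is to read off the count directly from Theorem~3.5 and Proposition~3.3. The semi-classical spectrum in $\left[-\sqrt{h},\sqrt{h}\right]$ is the disjoint union $\left(\alpha_{k}(h)\right)_{k\in\mathbf{I}_{h}}\sqcup\left(\beta_{\ell}(h)\right)_{\ell\in\mathbf{J}_{h}}$, so the total number of eigenvalues is $\#\mathbf{I}_{h}+\#\mathbf{J}_{h}$. By definition of $\mathbf{I}_{h}$ and $\mathbf{J}_{h}$ in Notation~3.4, these cardinalities are $\#\left(\frac{\mathcal{Y}_{h}([-1,1])}{2\pi}\cap\mathbb{Z}\right)$ and $\#\left(\frac{\mathcal{Z}_{h}([-1,1])}{2\pi}\cap\mathbb{Z}\right)$ respectively. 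So it suffices to estimate, for a compact interval $K_{h}$, the number of integers in $K_{h}/(2\pi)$, which is controlled within $\pm 1$ by $\mathrm{length}(K_{h})/(2\pi)$.

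First I would compute the length of $\mathcal{Y}_{h}([-1,1])$. Since $\mathcal{Y}_{h}$ is a bijection from $[-1,1]$ onto its image (Proposition~3.3), that image is an interval of length $\left|\mathcal{Y}_{h}(1)-\mathcal{Y}_{h}(-1)\right|=\left|\int_{-1}^{1}\mathcal{Y}_{h}^{\prime}(\lambda)\,d\lambda\right|$. Proposition~3.3 gives $\mathcal{Y}_{h}^{\prime}(\lambda)=\frac{\ln(h)}{\sqrt{-V^{\prime\prime}(0)}}+O(1)$ uniformly on $[-1,1]$, hence the length equals $\frac{2\left|\ln(h)\right|}{\sqrt{-V^{\prime\prime}(0)}}+O(1)$. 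Therefore $\#\mathbf{I}_{h}=\frac{\left|\ln(h)\right|}{\pi\sqrt{-V^{\prime\prime}(0)}}+O(1)$, and the same argument applied to $\mathcal{Z}_{h}$ gives the identical leading term for $\#\mathbf{J}_{h}$. Adding the two contributions yields $\#\mathbf{I}_{h}+\#\mathbf{J}_{h}=\frac{2\left|\ln(h)\right|}{\pi\sqrt{-V^{\prime\prime}(0)}}+O(1)$, which is of order $\left|\ln(h)\right|$.

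Here I should point out that the interval in the corollary statement is $\left[-\sqrt{h},\sqrt{h}\right]$, whereas Theorem~3.5 as transcribed describes the spectrum in $\left[-\sqrt{h},\sqrt{h}\right]$ but the functions $\mathcal{A}_{h},\mathcal{B}_{h}$ are built from $\mathcal{Y}_{h},\mathcal{Z}_{h}$ defined on $[-1,1]$, i.e.\ on the energy scale $\lambda h$ with $\lambda\in[-1,1]$, corresponding to energies in $[-h,h]$. So strictly the computation above counts eigenvalues in $[-h,h]$ and gives order $\left|\ln h\right|$ there; to reach the window $\left[-\sqrt h,\sqrt h\right]$ one rescales $\lambda\in[-h^{-1/2},h^{-1/2}]$, the derivative estimate $\mathcal{Y}_h'=\frac{\ln h}{\sqrt{-V''(0)}}+O(1)$ still holds on that larger range (this is where one must be slightly careful and invoke the $O(1)$ uniformity, or re-derive it from the Bohr--Sommerfeld expansion), so the image length becomes $\frac{2h^{-1/2}\left|\ln h\right|}{\sqrt{-V''(0)}}+O(h^{-1/2})$ and the count is of order $\left|\ln h\right|/\sqrt h$, matching the corollary.

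The main obstacle is precisely this uniformity issue: Proposition~3.3 is stated on the fixed compact $[-1,1]$, but the corollary's window $\left[-\sqrt h,\sqrt h\right]$ corresponds to a $\lambda$-range growing like $h^{-1/2}$, so one cannot apply Proposition~3.3 as a black box. I would resolve it by noting that the structural form $\mathcal{Y}_h(\lambda)=\frac{\lambda h\ln h}{h\sqrt{-V''(0)}}+(\text{lower order})$ comes from $\varepsilon(\lambda h)/h\cdot\ln h$ together with the smooth actions $\theta_{\pm}$ and the $\arg\Gamma$ and $\arccos$ corrections, all of which contribute terms that remain $O(h^{-1/2})$ (hence lower order than the $h^{-1/2}\left|\ln h\right|$ main term) throughout the enlarged range; then the integer-counting estimate $\left|\#(K_h/2\pi)-\mathrm{length}(K_h)/2\pi\right|\le 1$ finishes the proof. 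Everything else is a routine $O(1)$ bookkeeping.
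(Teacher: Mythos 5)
Your final answer is the right one, but the route you take is both much heavier than necessary and rests on a step that, as justified, is false. The corollary is an immediate consequence of the \emph{two-sided} spectral gap estimate in Theorem~3.5: the spectrum in $\left[-\sqrt{h},\sqrt{h}\right]$ consists of two interlacing families, and within each family consecutive eigenvalues satisfy
\[
\frac{Ch}{\left|\ln(h)\right|}\leq\left|\alpha_{k+1}(h)-\alpha_{k}(h)\right|\leq\frac{C^{\prime}h}{\left|\ln(h)\right|},
\]
so the number of members of each family in an interval of length $2\sqrt{h}$ is bounded above by $2\sqrt{h}\left|\ln(h)\right|/(Ch)+1$ and below by $2\sqrt{h}\left|\ln(h)\right|/(C^{\prime}h)-1$, i.e.\ is of order $\left|\ln(h)\right|/\sqrt{h}$; doubling for the two families finishes the proof in two lines, with no reference to $\mathcal{Y}_{h}$ or $\mathcal{Z}_{h}$ at all.

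Your lattice-point count of $\frac{\mathcal{Y}_{h}([-1,1])}{2\pi}\cap\mathbb{Z}$ correctly yields order $\left|\ln h\right|$ for the window $[-h,h]$ (consistent with Remark~4.14), and you rightly flag that reaching $\left[-\sqrt{h},\sqrt{h}\right]$ requires extending the analysis to $\lambda\in[-h^{-1/2},h^{-1/2}]$, where Proposition~3.3 is not available. But your proposed fix --- that the $\theta_{\pm}$, $\arg\Gamma$ and $\arccos$ corrections ``all contribute terms that remain $O(h^{-1/2})$, hence lower order than the $h^{-1/2}\left|\ln h\right|$ main term'' --- is not correct for the $\arg\Gamma$ term. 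Writing $y=\varepsilon(\lambda h)/h\approx\lambda/\sqrt{-V^{\prime\prime}(0)}$, on the enlarged range $y$ reaches size $h^{-1/2}$, and Stirling gives $\arg\Gamma\left(\tfrac12+iy\right)=y\ln|y|-y+O(1)$, which is of size $h^{-1/2}\left|\ln h\right|$ --- the \emph{same} order as the term $\frac{\varepsilon(\lambda h)}{h}\ln(h)$ you treat as the main one. Equivalently, $\frac{\partial}{\partial\lambda}\arg\Gamma\left(\tfrac12+i\varepsilon(\lambda h)/h\right)\approx\varepsilon^{\prime}(\lambda h)\,\mathrm{Re}\,\Psi\left(\tfrac12+iy\right)\sim\ln|\lambda|/\sqrt{-V^{\prime\prime}(0)}$, which is comparable to $\left|\ln h\right|$ near the edge of the window; it changes the leading coefficient of the density of states away from the separatrix rather than being a lower-order error. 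The order of magnitude $\left|\ln h\right|/\sqrt h$ survives this correction, but the bookkeeping you call routine is exactly the delicate transition between the singular and regular Bohr--Sommerfeld regimes, and your argument as written does not control it. Use the gap estimate instead.
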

In this paper, for technical reason, we just use the shape of the
spectrum in the compact set $\left[-h,h\right]\subset\left[-\sqrt{h},\sqrt{h}\right]$.
More precisely we choose an initial wave packet $\psi_{0}$ localized
in a compact set of size $h$. Indeed, the estimates in lemma 5.1
and 6.1 are very difficult to enable in the compact set $\left[-\sqrt{h},\sqrt{h}\right]$.
\begin{notation}
Let us denote by $\Theta_{h}$ the set of index for eigenvalues into
the compact set $[-h,h]$; e.g \[
\Theta_{h}:=\left\{ n\in\mathbb{N},\,\lambda_{n}(h)\in[-h,h]\right\} .\]

\end{notation}

\section{Some preliminaries}

In this section, we define partials autocorrelations functions, next
we define an initial vector $\psi_{0}$. And for finish, we introduce
the set $\Delta\subset\mathbb{N}$, this set is useful for making
estimates in sections 5 and 6.

\subsection{Partials autocorrelations functions }

Now, let be a initial vector $\psi_{0}={\displaystyle \sum_{n\in\mathbb{N}}c_{n}e_{n}}$
, then we have for all $t\geq0$

\[
\mathbf{r}(t)={\displaystyle \sum_{n\in\Theta_{h}}\left|c_{n}\right|^{2}e^{-i\frac{t}{h}\lambda_{n}(h)}+\sum_{n\in\mathbb{N}-\Theta_{h}}\left|c_{n}\right|^{2}e^{-i\frac{t}{h}\lambda_{n}(h)}}.\]
After, we let us consider an initial vector localized near the singularity
in a set of size $h$; hence the second series \[
\sum_{n\in\mathbb{N}-\Theta_{h}}\left|c_{n}\right|^{2}e^{-i\frac{t}{h}\lambda_{n}(h)}\]
will be equal to $O\left|\frac{1}{\ln(h)^{\infty}}\right|.$ Next,
the idea is to use the families $\left(\alpha_{k}(h)\right)_{k}$
and $\left(\beta_{\ell}(h)\right)_{\ell}$ from the theorem above
: as the eigenvalues of ${\displaystyle P_{h}}$ in the compact set
$[-h,h]$ are distinct; for all index $k\in\mathbf{I}_{h}$ and $\ell\in\mathbf{J}_{h}$
there exists a unique pair $\left(\sigma_{1}(k),\sigma_{2}(\ell)\right)\in\Theta_{h}^{2}$
such that :\[
\left\{ \begin{array}{cc}
h\mathcal{A}_{h}(2\pi k)=\lambda_{\sigma_{1}(k)}(h)\\
\\h\mathcal{B}_{h}(2\pi\ell)=\lambda_{\sigma_{2}(\ell)}(h).\end{array}\right.\]
So, we can consider the applications :\[
\sigma_{1}\,:\left\{ \begin{array}{cc}
\mathbf{I}_{h}\rightarrow\Theta_{h}\\
\\k\mapsto\sigma_{1}(k)\end{array}\right.\;\;\textrm{and}\;\;\sigma_{2}\,:\left\{ \begin{array}{cc}
\mathbf{J}_{h}\rightarrow\Theta_{h}\\
\\\ell\mapsto\sigma_{2}(\ell).\end{array}\right.\]
Clearly, this applications are injectives; hence $\mathbf{I}_{h}$
is isomorphic to $\sigma_{1}(\mathbf{I}_{h})$ (similary for $\mathbf{J}_{h}$
and $\sigma_{2}(\mathbf{J}_{h})$). Moreover we have \[
\Theta_{h}=\sigma_{1}(\mathbf{I}_{h})\bigsqcup\sigma_{2}(\mathbf{J}_{h}).\]
So, we get the equality : 

\[
{\displaystyle \mathbf{r}(t)=\sum_{k\in\sigma_{1}(\mathbf{I}_{h})}\left|c_{k}\right|^{2}e^{-i\frac{t}{h}\lambda_{k}(h)}}+\sum_{\ell\in\sigma_{2}(\mathbf{J}_{h})}\left|c_{\ell}\right|^{2}e^{-i\frac{t}{h}\lambda_{\ell}(h)}\]
\[
+\sum_{n\in\mathbb{N}-\Theta_{h}}\left|c_{n}\right|^{2}e^{-i\frac{t}{h}\lambda_{n}(h)}\]
\[
{\displaystyle =\sum_{k\in\mathbf{I}_{h}}\left|c_{\sigma_{1}(k)}\right|^{2}e^{-it\mathcal{A}_{h}(2\pi k)}}+\sum_{\ell\in\mathbf{J}_{h}}\left|c_{\sigma_{2}(\ell)}\right|^{2}e^{-it\mathcal{B}_{h}(2\pi\ell)}\]

\[
+\sum_{n\in\mathbb{N}-\Theta_{h}}\left|c_{n}\right|^{2}e^{-i\frac{t}{h}\lambda_{n}(h)}.\]
Now, our goal here is to study the series $t\mapsto\mathbf{a}$ et
$t\mapsto\mathbf{b}(t)$ :
\begin{defn}
Let us consider partial autocorrelation functions : \[
{\displaystyle \mathbf{a}(t):=\sum_{k\in\mathbf{I}_{h}}\left|c_{\sigma_{1}(k)}\right|^{2}e^{-it\mathcal{A}_{h}(2\pi k)},\;{\displaystyle \mathbf{b}(t):=\sum_{\ell\in\mathbf{J}_{h}}\left|c_{\sigma_{2}(\ell)}\right|^{2}e^{-it\mathcal{B}_{h}(2\pi\ell)}}.}\]

\end{defn}

\subsection{Choice of initial state}

\subsubsection{Prologue}

Let us define an initial vector $\psi_{0}={\displaystyle \sum_{n\in\mathbb{N}}c_{n}e_{n}}$
localized near the real number $hE$ (where $E\in\left[-1,1\right]$). 
\begin{defn}
Let us consider the quantum integers $n_{0}=n_{0}(h,E)$ and $m_{0}=m_{0}(h,E)$
defined by \[
n_{0}:=\textrm{arg}\min_{n\in\mathbb{\mathbf{I}}_{h}}\left|h\mathcal{A}_{h}(2\pi n)-hE\right|\;\;\textrm{and}\;\; m_{0}:=\textrm{arg}\min_{m\in\mathbb{\mathbf{I}}_{h}}\left|h\mathcal{B}_{h}(2\pi m)-hE\right|.\]
\end{defn}
\begin{rem}
Without loss of generality, we may suppose $n_{0}$ and $m_{0}$ are
unique.
\end{rem}
The integer $n_{0}$ (resp. $m_{0}$) is the index of the eigenvalues
from the family $\left(\alpha_{k}(h)\right)_{k}$ (resp. $\left(\beta_{\ell}(h)\right)_{\ell}$)
the closest to the real number $hE$. As the spectral gap is of order
$O(h/\left|\ln(h)\right|)$ there exists $C>0$ such that :

\[
\left|h\mathcal{A}_{h}(2\pi n_{0})-hE\right|\leq\frac{Ch}{\left|\ln h\right|},\;\left|h\mathcal{B}_{h}(2\pi m_{0})-hE\right|\leq\frac{Ch}{\left|\ln h\right|}.\]
So, we get :
\begin{lem}
As $h\rightarrow0,$ we have \[
n_{0}\sim\frac{N}{h}\;\textrm{and}\; m_{0}\sim\frac{M}{h}\]
where $N$ and $M$ non-null real numbers.\end{lem}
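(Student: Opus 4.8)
The plan is to extract the asymptotics of $n_0$ and $m_0$ directly from the description of the spectrum in Theorem 3.5 together with the derivative estimate in Proposition 3.3. First I would recall that, by definition, $n_0 \in \mathbf{I}_h$ is the index minimizing $|h\mathcal{A}_h(2\pi n) - hE|$, and the inequality already displayed in the text gives $|h\mathcal{A}_h(2\pi n_0) - hE| \leq Ch/|\ln h|$, hence $\mathcal{A}_h(2\pi n_0) = E + O(1/|\ln h|)$, so that $2\pi n_0$ is the point of $\mathcal{Y}_h([-1,1])\cap 2\pi\mathbb{Z}$ at which $\mathcal{A}_h = \mathcal{Y}_h^{-1}$ takes a value $O(1/|\ln h|)$-close to $E$. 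Equivalently, applying $\mathcal{Y}_h$, we get $2\pi n_0 = \mathcal{Y}_h(E) + O\!\left(\mathcal{Y}_h'(\cdot)/|\ln h|\right)$ by the mean value theorem applied to $\mathcal{Y}_h$ on $[-1,1]$.

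Next I would invoke Proposition 3.3: on $[-1,1]$ one has $\mathcal{Y}_h'(\lambda) = \frac{\ln h}{\sqrt{-V''(0)}} + O(1)$, and in particular $|\mathcal{Y}_h'| = O(|\ln h|)$ uniformly. Feeding this into the error term above yields $2\pi n_0 = \mathcal{Y}_h(E) + O(1)$. On the other hand, integrating the derivative estimate from, say, $0$ to $E$ gives $\mathcal{Y}_h(E) = \mathcal{Y}_h(0) + \frac{E\ln h}{\sqrt{-V''(0)}} + O(1)$; since $\mathcal{Y}_h(0)$ is itself of size $O(|\ln h|)$ (again by the derivative bound together with the fact that the interval $[-1,1]$ has fixed length, plus the explicit form of $f_h$ and the holonomy terms), we conclude $2\pi n_0 = c\,\ln h + O(|\ln h|\cdot\text{o}(1))$... more carefully, $n_0 = \frac{1}{2\pi}\mathcal{Y}_h(E)(1 + o(1))$ and $\mathcal{Y}_h(E)$ is comparable to $|\ln h|$. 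The cleanest route is: $n_0 h = \frac{h}{2\pi}\mathcal{Y}_h(E) + O(h)$, and one shows $\frac{h}{2\pi}\mathcal{Y}_h(E) \to$ a nonzero constant? That cannot be right dimensionally — rather I should state $n_0 \sim N/h$ with $N = \lim h^2 n_0$, i.e. I would write $h^2 n_0 = \frac{h^2}{2\pi}\mathcal{Y}_h(E) + O(h^2)$ and identify $N := \lim_{h\to 0}\frac{h^2}{2\pi}\mathcal{Y}_h(E)$, which exists and is non-null because $\mathcal{Y}_h(E) = \frac{1}{h}\bigl(\text{leading }1/h\text{ terms from }\theta_{\pm}\text{ and }\varepsilon(E)h^{-1}\ln h\bigr) + \cdots$, so $\mathcal{Y}_h(E)$ is genuinely of order $1/h$ (not $|\ln h|$) once the $\theta_{\pm} = S^{\pm}/h$ contributions are accounted for.

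Here lies the real subtlety, and the main obstacle: one must track which terms in $f_h(\lambda) = F_h(\lambda h)$ dominate. Unwinding the definitions, $F_h(E) = -\frac{\theta_+(E)+\theta_-(E)}{2} + \frac{\pi}{2} + \frac{\varepsilon(E)}{h}\ln h + \arg\Gamma(\tfrac12 + i\tfrac{\varepsilon(E)}{h})$, and since $\theta_{\pm}(E) = S^{\pm}(E)/h$ with $S^{\pm}(0) = S_0^{\pm}(0) \neq 0$ in general (the singular actions do not vanish at the critical energy), the term $-\frac{S^+(E)+S^-(E)}{2h}$ is of exact order $1/h$. Hence $\mathcal{Y}_h(E) = f_h(E) + O(1) = -\frac{S_0^+(E) + S_0^-(E)}{2h} + O\!\left(\frac{\ln h}{h}\cdot h\right)+ \cdots$, wait — $\frac{\varepsilon(E)}{h}\ln h$ with $\varepsilon(E) = O(h)$ (since $E = \lambda h$, $\varepsilon(\lambda h) = O(h)$) contributes $O(\ln h)$, a lower-order term. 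So the dominant term really is $-\frac{S_0^+(0)+S_0^-(0)}{2h}$, giving $2\pi n_0 \sim -\frac{S_0^+(0)+S_0^-(0)}{2h}$, i.e. $N = -\frac{S_0^+(0)+S_0^-(0)}{4\pi}$, and one must check this is non-zero — which holds because the singular actions $S_0^{\pm}$ are the symplectic areas of the bounded components of $\Lambda_0 = p^{-1}(0)$ (the two "wells"), strictly positive. The identical argument with $\mathcal{Z}_h$, $\mathcal{B}_h = \mathcal{Z}_h^{-1}$ in place of $\mathcal{Y}_h$, $\mathcal{A}_h$ gives $m_0 \sim M/h$ with $M = -\frac{S_0^+(0)+S_0^-(0)}{4\pi}$ as well (the $\arccos$ correction terms are $O(1)$ and do not affect the leading order). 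I would close by remarking that $N$ and $M$ coincide at leading order, the difference being visible only in subleading terms governed by $G_h$.
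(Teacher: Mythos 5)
Your final argument is essentially the paper's proof: apply $\mathcal{Y}_h$ to $\mathcal{A}_h(2\pi n_0)=E+O(1/\left|\ln h\right|)$, multiply by $h$, observe that the only surviving term comes from the singular actions $\theta_{\pm}=S^{\pm}/h$ (whose expansion starts at order $h^{-1}$), while $\varepsilon(\lambda h)\ln h$, $h\arg\Gamma(\cdot)$ and the $h\arccos(\cdot)$ correction all tend to $0$, giving $2\pi n_0 h\rightarrow-\frac{1}{2}\left(S^{+}(0)+S^{-}(0)\right)_{\textrm{leading}}\neq0$; your identification of the non-vanishing limit with the symplectic areas of the two wells is a welcome justification the paper omits. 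The only blemish is the intermediate normalization slip where you write $N=\lim h^{2}\mathcal{Y}_h(E)/2\pi$ (which would be $0$); the correct normalization $N=\lim h\,n_0=\lim h\,\mathcal{Y}_h(E)/2\pi$ is what you actually use in the final, correct conclusion.
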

\begin{proof}
We give the proof for the integer $n_{0}$. As\[
\left|\mathcal{A}_{h}(2\pi n_{0})-E\right|\leq\frac{C}{\left|\ln h\right|};\]
e.g.\[
\mathcal{A}_{h}(2\pi n_{0})=E+O\left(\frac{1}{\left|\ln h\right|}\right),\]
we get\[
2\pi n_{0}=\mathcal{Y}_{h}\left(E+O\left(\frac{1}{\left|\ln h\right|}\right)\right)\]
\[
=f_{h}\left(E+O\left(\frac{1}{\left|\ln h\right|}\right)\right)-\arccos\left(\frac{\cos\left(g_{h}\left(E+O\left(\frac{1}{\left|\ln h\right|}\right)\right)\right)}{\sqrt{1+\exp\left(2\pi\varepsilon\left(hE+O\left(\frac{h}{\left|\ln h\right|}\right)\right)/h\right)}}\right).\]
By definition :\[
f_{h}\left(E+O\left(\frac{1}{\left|\ln h\right|}\right)\right)=F_{h}\left(hE+O\left(\frac{h}{\left|\ln h\right|}\right)\right)\]
\textit{\[
=-\frac{\theta_{+}\left(hE+O\left(h/\left|\ln(h)\right|\right)\right)+\theta_{-}\left(hE+O\left(h/\left|\ln(h)\right|\right)\right)}{2}+\frac{\pi}{2}\]
}\[
+\frac{\varepsilon\left(hE+O\left(h/\left|\ln(h)\right|\right)\right)}{h}\ln(h)+\arg\left(\Gamma\left(\frac{1}{2}+i\frac{\varepsilon\left(hE+O\left(h/\left|\ln(h)\right|\right)\right)}{h}\right)\right).\]
Hence, if we multiply $2\pi n_{0}$ by $h$ we obtain\[
2\pi n_{0}h=-h\frac{\theta_{+}\left(hE+O\left(h/\left|\ln(h)\right|\right)\right)+\theta_{-}\left(hE+O\left(h/\left|\ln(h)\right|\right)\right)}{2}\]
\textit{\[
+\frac{\pi}{2}h+\varepsilon\left(hE+O\left(h/\left|\ln(h)\right|\right)\right)\ln(h)+h\arg\left(\Gamma\left(\frac{1}{2}+i\frac{\varepsilon\left(hE+O\left(h/\left|\ln(h)\right|\right)\right)}{h}\right)\right)\]
}\[
-h\arccos\left(\frac{\cos\left(g_{h}\left(E+O\left(\frac{1}{\left|\ln h\right|}\right)\right)\right)}{\sqrt{1+\exp\left(2\pi\varepsilon\left(hE+O\left(\frac{h}{\left|\ln h\right|}\right)\right)/h\right)}}\right).\]
Let us evaluate this five terms. As the function \[
E\mapsto-\frac{\theta_{+}\left(hE+O\left(h/\left|\ln(h)\right|\right)\right)+\theta_{-}\left(hE+O\left(h/\left|\ln(h)\right|\right)\right)}{2}\]
admit a asymptotic expansion in power of $h$ from $-1$ to $+\infty$,
we have \[
-h\frac{\theta_{+}\left(hE+O\left(h/\left|\ln(h)\right|\right)\right)+\theta_{-}\left(hE+O\left(h/\left|\ln(h)\right|\right)\right)}{2}=O(1);\]
\foreignlanguage{english}{more precisely, for $h\rightarrow0$ \[
-h\frac{\theta_{+}\left(hE+O\left(h/\left|\ln(h)\right|\right)\right)+\theta_{-}\left(hE+O\left(h/\left|\ln(h)\right|\right)\right)}{2}\rightarrow-\frac{1}{2}\left(S_{-1}^{+}(0)+S_{-1}^{-}(0)\right)\neq0.\]
}Next :\[
\varepsilon\left(hE+O\left(h/\left|\ln(h)\right|\right)\right)\ln(h)=\left[\frac{hE+O\left(h/\left|\ln(h)\right|\right)}{\sqrt{-V^{''}(0)}}+O(h^{2})\right]\ln(h)\rightarrow0.\]
For finish, as\[
\arg\left(\Gamma\left(\frac{1}{2}+i\frac{\varepsilon\left(hE+O\left(h/\left|\ln(h)\right|\right)\right)}{h}\right)\right)=O(1)\]
and\[
\arccos\left(\frac{\cos\left(g_{h}\left(E+O\left(\frac{1}{\left|\ln h\right|}\right)\right)\right)}{\sqrt{1+\exp\left(2\pi\varepsilon\left(hE+O\left(\frac{h}{\left|\ln h\right|}\right)\right)/h\right)}}\right)=O(1);\]
we get, for $h\rightarrow0$ :\[
h\arg\left(\Gamma\left(\frac{1}{2}+i\frac{\varepsilon\left(hE+O\left(h/\left|\ln(h)\right|\right)\right)}{h}\right)\right)\rightarrow0\]
and\[
h\arccos\left(\frac{\cos\left(g_{h}\left(E+O\left(\frac{1}{\left|\ln h\right|}\right)\right)\right)}{\sqrt{1+\exp\left(2\pi\varepsilon\left(hE+O\left(\frac{h}{\left|\ln h\right|}\right)\right)/h\right)}}\right)\rightarrow0.\]
Hence, \[
\lim_{h\rightarrow0}2\pi n_{0}h=-\frac{1}{2}\left(S_{-1}^{+}(0)+S_{-1}^{-}(0)\right);\]
so we prove the lemma.
\end{proof}

\subsubsection{First definition (non definitive)}

For technical reason, let us introduce the following sequence :
\begin{defn}
Let us consider : \textbf{\[
\mu_{n,0}:=\left\{ \begin{array}{cc}
h\mathcal{A}_{h}(2\pi n_{0})\;\;\mathbf{\textrm{if }}n\in\sigma_{1}(\mathbf{I}_{h})\\
h\mathcal{B}_{h}(2\pi m_{0})\;\;\mathbf{\textrm{if }}n\in\sigma_{2}(\mathbf{J}_{h}).\\
0\;\;\mathbf{\textrm{if }}n\in\mathbb{N}-\Theta_{h}.\end{array}\right.\]
}
\end{defn}
For $n\in\Theta_{h}=\sigma_{1}(\mathbf{I}_{h})\bigsqcup\sigma_{2}(\mathbf{J}_{h})$,
the sequence $\left(\mu_{n,0}\right)_{n}$ take only two values : 
\begin{itemize}
\item the closest eigenvalue from the family $\left(\alpha_{k}(h)\right)_{k}$
to the real number $hE$; 
\item or the closest eigenvalue from the family $\left(\beta_{\ell}(h)\right)_{\ell}$
to the real number $hE$.
\end{itemize}
Now, we can give the first (non definitive) definition of our initial
state.
\begin{defn}
Let us consider the sequence $\left(c_{n}(h)\right)_{n\in\mathbb{Z}}$
defined by :\[
c_{n}:=c_{n}(h)=K_{h}\chi\left(\frac{\mu_{n}-\mu_{n,0}}{h^{\alpha^{\prime}}}\left|\ln h\right|^{\gamma^{\prime}}\right)\chi_{0}\left(\frac{\mu_{n}-\mu_{n,0}}{2h}\right),\, n\in\mathbb{Z}\]
where $\chi\in\mathcal{S}(\mathbb{R})$ , non null, non-negative and
even, $\alpha^{\prime}$ and $\gamma^{\prime}$ are two reals numbers;
$\chi_{0}\in\mathcal{D}(\mathbb{R})$ such that $\chi_{0}\equiv1$
on the set $\left]-1,1\right[$ and $\textrm{supp}(\chi_{0})\subset\left[-1,1\right]$.
We also denote\[
K_{h}:=\left\Vert \chi\left(\frac{\mu_{n}-\mu_{n,0}}{h^{\alpha^{\prime}}}\left|\ln h\right|^{\gamma^{\prime}}\right)\chi_{0}\left(\frac{\mu_{n}-\mu_{n,0}}{2h}\right)\right\Vert _{\ell^{2}(\mathbb{N})}.\]

\end{defn}
Let us detail this choice :
\begin{enumerate}
\item The term \foreignlanguage{english}{$\chi\left(\frac{\mu_{n}-\mu_{n,0}}{h^{\alpha^{\prime}}}\left|\ln h\right|^{\gamma^{\prime}}\right)$
localizes around the energy level $hE$ (for technical reason we localize
around the }closest eigenvalues\foreignlanguage{english}{ to $E_{h}$.}
\selectlanguage{english}%
\item Constants\foreignlanguage{english}{ $\alpha^{\prime}$ et $\gamma^{\prime}$
are coefficients for dilate the function $\chi$.}
\selectlanguage{english}%
\item The function $\chi_{0}$ is a cut-off for eigenvalues out of the compact
set $\left[-h,h\right]$.
\end{enumerate}

\subsubsection{Choice of parameter $\alpha^{\prime}$ and $\gamma^{\prime}$}

The only way to have a localization for initial state larger than
the spectral gap, and in the same time, to have a localization in
the compact set $\left[-h,h\right]$; e.g to have in the same time
:\[
\frac{h}{\left|\ln h\right|}\ll\frac{h^{\alpha^{\prime}}}{\left|\ln h\right|^{\gamma^{\prime}}}\leq h;\]
is to take\[
\alpha^{\prime}=1;\;0\leq\gamma^{\prime}<1.\]

\begin{rem}
The larger choice for $h^{\alpha^{\prime}}/\left|\ln h\right|^{\gamma^{\prime}}=h/\left|\ln h\right|^{\gamma^{\prime}}$
is to take $\gamma^{\prime}=0$.
\end{rem}
Under this assumptions, we can forget the function $\chi_{0}$ and
we get :

\subsubsection{Second definition}
\begin{defn}
Let us consider the sequence $\left(c_{n}(h)\right)_{n\in\mathbb{Z}}$
defined by :\[
c_{n}:=K_{h}\chi\left(\frac{\mu_{n}-\mu_{n,0}}{h}\left|\ln h\right|^{\gamma^{\prime}}\right),\, n\in\mathbb{Z}\]
where $\chi\in\mathcal{S}(\mathbb{R})$ , non null, non-negative and
even; $0\leq\gamma^{\prime}<1$; and\[
K_{h}:=\left\Vert \chi\left(\frac{\mu_{n}-\mu_{n,0}}{h}\left|\ln h\right|^{\gamma^{\prime}}\right)\right\Vert _{\ell^{2}(\mathbb{N})}.\]

\end{defn}
Now, in the partial autocorrelations functions $\mathbf{a}(t)$ and
$\mathbf{b}(t)$ appears the sequences $\left(c_{\sigma_{1}(k)}\right)_{k}$and
$\left(c_{\sigma_{2}(\ell)}\right)_{\ell}$; for simplicity let us
consider the following sequences :
\begin{notation}
Let us denote\[
a_{n}:=c_{\sigma_{1}(n)}=K_{h}\chi\left(\left(\mathcal{A}_{h}(2\pi n)-\mathcal{A}_{h}(2\pi n_{0})\right)\left|\ln h\right|^{\gamma^{\prime}}\right);\]
\[
b_{m}:=c_{\sigma_{2}(m)}=K_{h}\chi\left(\left(\mathcal{B}_{h}(2\pi m)-\mathcal{B}_{h}(2\pi m_{0})\right)\left|\ln h\right|^{\gamma^{\prime}}\right).\]

\end{notation}

\subsubsection{Last definition}

By Lagrange's theorem there exists a real number $\zeta=\zeta(n,h,E)\in\mathcal{Y}_{h}\left([-1,1]\right)$
such that\[
\mathcal{A}_{h}(2\pi n)-\mathcal{A}_{h}(2\pi n_{0})=\mathcal{A}_{h}^{\prime}(\zeta)2\pi(n-n_{0});\]
since (see proposition 3.2)

\selectlanguage{english}%
\[
\mathcal{A}_{h}^{\prime}(\zeta)=\frac{2\pi}{\frac{\left|\ln(h)\right|}{\sqrt{-V^{''}(0)}}+O(1)};\]
we get\foreignlanguage{english}{\[
\mathcal{A}_{h}(2\pi n)-\mathcal{A}_{h}(2\pi n_{0})=\frac{2\pi(n-n_{0})\sqrt{-V^{''}(0)}}{\left|\ln h\right|}\left(1+O\left|\frac{1}{\left|\ln h\right|}\right|\right).\]
}
\selectlanguage{english}%
\begin{defn}
Let us consider the sequence $\left(a_{n}\right)_{n\in\mathbb{Z}}=\left(a_{n}(h)\right)_{n\in\mathbb{Z}}$
defined by :\[
a_{n}:=K_{h}\chi\left(\frac{n-n_{0}}{\left|\ln h\right|^{1-\gamma^{\prime}}}\right),\, n\in\mathbb{Z}\]
where $\chi\in\mathcal{S}(\mathbb{R})$ , non null, non-negative and
even; $0\leq\gamma^{\prime}<1$; and\[
K_{h}:=\left\Vert \chi\left(\frac{n-n_{0}}{\left|\ln h\right|^{1-\gamma^{\prime}}}\right)\right\Vert _{\ell^{2}(\mathbb{N})}.\]
So, clearly the sequence $\left(a_{n}\right)_{n}\in\ell^{2}(\mathbb{Z}).$
Now, let us evaluate the constant of normalization $K_{h}$. Start
by the :\end{defn}
\begin{lem}
For a function $\varphi\in\mathcal{S}(\mathbb{R})$ and $\varepsilon\in\left]0,1\right]$;
we have :\[
\sum_{\ell\in\mathbb{\mathbb{Z}},\,\left|\ell\right|\geq1}\left|\varphi\left(\frac{\ell}{\varepsilon}\right)\right|=O(\varepsilon^{\infty}).\]
\end{lem}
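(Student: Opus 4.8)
The plan is to exploit the defining rapid decay of Schwartz functions, reducing the estimate to the convergence of the series $\sum_{\ell\geq 1}\ell^{-N}$. Recall that for $\varphi\in\mathcal{S}(\mathbb{R})$ and every integer $N\geq 0$ there is a constant $C_N>0$ with
\[
\left|\varphi(x)\right|\leq C_N(1+|x|)^{-N}\qquad\text{for all }x\in\mathbb{R}.
\]
Fix an arbitrary integer $N\geq 1$; I would apply this bound with the exponent $N+2$ (strictly larger than $1$, so as to get an absolutely convergent sum over $\ell$ in a moment).

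Next, for $\ell\in\mathbb{Z}$ with $|\ell|\geq 1$ and $\varepsilon\in\left]0,1\right]$, I estimate
\[
\left|\varphi\!\left(\frac{\ell}{\varepsilon}\right)\right|\leq C_{N+2}\left(1+\frac{|\ell|}{\varepsilon}\right)^{-(N+2)}=C_{N+2}\,\varepsilon^{N+2}\left(\varepsilon+|\ell|\right)^{-(N+2)}\leq C_{N+2}\,\varepsilon^{N+2}\,|\ell|^{-(N+2)},
\]
using $\varepsilon+|\ell|\geq|\ell|$. Summing over all $\ell$ with $|\ell|\geq 1$ then yields
\[
\sum_{\ell\in\mathbb{Z},\,|\ell|\geq 1}\left|\varphi\!\left(\frac{\ell}{\varepsilon}\right)\right|\leq C_{N+2}\,\varepsilon^{N+2}\cdot 2\sum_{\ell\geq 1}\ell^{-(N+2)}=2\,C_{N+2}\,\zeta(N+2)\,\varepsilon^{N+2}\leq \big(2\,C_{N+2}\,\zeta(N+2)\big)\,\varepsilon^{N},
\]
since $\varepsilon^{2}\leq 1$. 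As $N$ was arbitrary, this is precisely the assertion that the left-hand side is $O(\varepsilon^{\infty})$.

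There is no genuine obstacle here; the only point requiring a little care is to pick the decay exponent strictly above $1$ (here $N+2$) so that the $\ell$-series converges, and then to absorb the surplus factor $\varepsilon^{2}$ into the constant. One could equally split off a fixed factor $\varepsilon$ and keep $\varepsilon^{N+1}$; the argument is insensitive to such bookkeeping.
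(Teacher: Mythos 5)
Your proof is correct and follows essentially the same route as the paper: both arguments extract a factor $\varepsilon^{N}$ from the rapid decay of the Schwartz function and reduce the sum to a convergent series $\sum_{\ell\geq 1}\ell^{-k}$. The paper merely packages the decay estimate by applying a preliminary $O(1)$ bound to the auxiliary function $\psi(x)=x^{2N}\varphi(x)$, which is equivalent to your direct use of the seminorm bound $|\varphi(x)|\leq C_{N}(1+|x|)^{-N}$.
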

\begin{proof}
The starting point is the following remark : for all function \foreignlanguage{english}{$\psi\in\mathcal{S}(\mathbb{R})$}
and for all $\varepsilon\in\left]0,1\right]$ we have \[
\sum_{\ell\in\mathbb{\mathbb{Z}},\,\left|\ell\right|\geq1}\left|\psi\left(\frac{\ell}{\varepsilon}\right)\right|=O(1).\]
Indeed, for all $\ell\in\mathbb{\mathbb{Z}},\,\left|\ell\right|\geq1$\[
\left|\psi\left(\frac{\ell}{\varepsilon}\right)\right|\leq\frac{M}{1+\left(\frac{\ell}{\varepsilon}\right)^{2}}=\frac{\varepsilon^{2}M}{\varepsilon^{2}+\ell^{2}}\]
\[
\leq\frac{M}{\ell^{2}}\]
and since \[
\sum_{\ell\in\mathbb{\mathbb{Z}},\,\left|\ell\right|\geq1}\frac{M}{\ell^{2}}<+\infty.\]
we conclude.

Next, we apply this to the function $\psi(x):=x^{2N}\varphi(x)$,
where $N\in\mathbb{N}$; and we get for all $N\geq1$ :\[
\sum_{\ell\in\mathbb{\mathbb{Z}},\,\left|\ell\right|\geq1}\left|\varphi\left(\frac{\ell}{\varepsilon}\right)\right|\leq\varepsilon^{2N}\sum_{\ell\in\mathbb{\mathbb{Z}}}\left|\psi\left(\frac{\ell}{\varepsilon}\right)\right|=O(\varepsilon^{2N}).\]
So we prove the lemma.\end{proof}
\begin{thm}
We have\[
K_{h}=\frac{1}{\sqrt{\mathfrak{F}\left(\chi^{2}\right)(0)}\left|\ln h\right|^{\frac{1-\gamma^{\prime}}{2}}}+O\left|\frac{1}{\ln(h)^{\infty}}\right|;\]
so\[
\left\Vert a_{n}\right\Vert _{\ell^{2}(\mathbb{N})}=1+O\left|\frac{1}{\ln(h)^{\infty}}\right|.\]
\end{thm}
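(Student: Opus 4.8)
The plan is to compute $K_h^{-2}=\sum_{n\in\mathbb{Z}}\bigl|\chi\bigl(\tfrac{n-n_0}{|\ln h|^{1-\gamma'}}\bigr)\bigr|^2$ by recognizing it as a Riemann-type sum for the function $\chi^2$ with mesh $\varepsilon:=|\ln h|^{-(1-\gamma')}\to 0$, and then invoking the Poisson summation formula to control the error. Concretely, write $\varphi:=\chi^2\in\mathcal{S}(\mathbb{R})$ (since $\chi$ is Schwartz, so is $\chi^2$), and set $g(x):=\varphi(\varepsilon x)$. Then $\sum_{n\in\mathbb{Z}}\varphi\bigl((n-n_0)\varepsilon\bigr)=\sum_{n\in\mathbb{Z}} g(n-n_0)$. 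By Poisson summation, $\sum_{n\in\mathbb{Z}} g(n-n_0)=\sum_{k\in\mathbb{Z}}\widehat{g}(k)\,e^{-2\pi i k n_0}$, and $\widehat{g}(k)=\varepsilon^{-1}\widehat{\varphi}(k/\varepsilon)$. The $k=0$ term gives $\varepsilon^{-1}\widehat{\varphi}(0)=\varepsilon^{-1}\mathfrak{F}(\chi^2)(0)$, which is the main term $|\ln h|^{1-\gamma'}\,\mathfrak{F}(\chi^2)(0)$.

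The next step is to bound the tail $\sum_{k\neq 0}\varepsilon^{-1}\widehat{\varphi}(k/\varepsilon)$. Since $\widehat{\varphi}=\mathfrak{F}(\chi^2)\in\mathcal{S}(\mathbb{R})$ as well, Lemma~4.12 applies directly (with the Schwartz function $\widehat\varphi$ in place of $\varphi$ there): $\sum_{|k|\geq 1}|\widehat\varphi(k/\varepsilon)|=O(\varepsilon^\infty)$, so $\varepsilon^{-1}\sum_{|k|\geq 1}|\widehat\varphi(k/\varepsilon)|=O(\varepsilon^\infty)$ too, absorbing the harmless factor $\varepsilon^{-1}$. Therefore
\[
K_h^{-2}=|\ln h|^{1-\gamma'}\,\mathfrak{F}(\chi^2)(0)+O\bigl(|\ln h|^{-\infty}\bigr)=|\ln h|^{1-\gamma'}\,\mathfrak{F}(\chi^2)(0)\Bigl(1+O\bigl(|\ln h|^{-\infty}\bigr)\Bigr),
\]
using that $\mathfrak{F}(\chi^2)(0)=\int\chi^2>0$ since $\chi$ is non-null and non-negative. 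Taking the reciprocal square root yields
\[
K_h=\frac{1}{\sqrt{\mathfrak{F}(\chi^2)(0)}\,|\ln h|^{\frac{1-\gamma'}{2}}}\Bigl(1+O\bigl(|\ln h|^{-\infty}\bigr)\Bigr)=\frac{1}{\sqrt{\mathfrak{F}(\chi^2)(0)}\,|\ln h|^{\frac{1-\gamma'}{2}}}+O\bigl(|\ln h|^{-\infty}\bigr),
\]
where in the last step the relative $O(|\ln h|^{-\infty})$ error, multiplied by the polynomially-small prefactor, is still $O(|\ln h|^{-\infty})$. The second assertion $\|a_n\|_{\ell^2(\mathbb{N})}=1+O(|\ln h|^{-\infty})$ is then immediate: by construction $\|a_n\|_{\ell^2}=K_h\,\|\chi(\tfrac{n-n_0}{|\ln h|^{1-\gamma'}})\|_{\ell^2}=K_h\cdot K_h^{-1}=1$ exactly if one uses the exact normalization; but since the statement quotes the asymptotic form of $K_h$, one simply notes $\|a_n\|_{\ell^2}^2=K_h^2\cdot K_h^{-2}$ and the stated expansions are consistent. (Strictly, with $K_h$ defined as the exact $\ell^2$-norm, $\|a_n\|_{\ell^2}=1$ on the nose; the $O(|\ln h|^{-\infty})$ merely reflects the approximation quoted for $K_h$.)

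One technical point deserves care: the sum is over $n\in\mathbb{N}$ (i.e. $n\geq 0$) rather than $n\in\mathbb{Z}$, whereas Poisson summation is naturally stated over $\mathbb{Z}$. Since $n_0\sim N/h\to\pm\infty$ by Lemma~4.5, and $\chi$ is Schwartz, the contribution of the "missing" indices $n<0$ is $\sum_{n<0}\varphi((n-n_0)\varepsilon)$; each such term has argument of size at least $|n_0|\varepsilon\sim (|N|/h)|\ln h|^{-(1-\gamma')}\to\infty$, so this entire piece is $O(h^\infty)$, hence certainly $O(|\ln h|^{-\infty})$, and can be discarded before applying Poisson summation over $\mathbb{Z}$. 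The main — really the only — obstacle is this bookkeeping of error terms: making sure the $\varepsilon^{-1}$ prefactor in the dual sum does not spoil the $O(|\ln h|^{-\infty})$ bound (it doesn't, since $\varepsilon^{-1}$ is only polynomially large in $|\ln h|$ while Lemma~4.12 gives faster-than-any-polynomial decay), and handling the $\mathbb{N}$-versus-$\mathbb{Z}$ discrepancy via the divergence of $n_0$. Everything else is a routine application of Poisson summation and Lemma~4.12.
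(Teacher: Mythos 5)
Your proposal is correct and follows essentially the same route as the paper: Poisson summation to isolate the zero mode $|\ln h|^{1-\gamma'}\mathfrak{F}(\chi^{2})(0)$, Lemma 4.12 applied on the dual side to kill the nonzero frequencies (the polynomial factor $\varepsilon^{-1}$ being harmless against the $O(\varepsilon^{\infty})$ decay), and the $\mathbb{N}$-versus-$\mathbb{Z}$ discrepancy absorbed via $n_{0}\sim N/h$ and the Schwartz decay of $\chi^{2}$. Your remark that $\|a_{n}\|_{\ell^{2}(\mathbb{N})}=1$ is exact once $K_{h}$ is read as the reciprocal of the norm correctly identifies a notational slip in the paper's definition.
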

\begin{proof}
By Poisson formula and the lemma above we get the equality\textit{\[
\sum_{n\in\mathbb{Z}}\chi^{2}\left(\frac{n-n_{0}}{\left|\ln h\right|^{1-\gamma^{\prime}}}\right)=\left|\ln h\right|^{1-\gamma^{\prime}}\sum_{\ell\in\mathbb{Z}}\mathfrak{F}\left(\chi^{2}\right)\left(-\ell\left|\ln h\right|^{1-\gamma^{\prime}}\right)\]
\[
=\left|\ln h\right|^{1-\gamma^{\prime}}\left[\mathfrak{F}\left(\chi^{2}\right)(0)+O\left|\frac{1}{\ln(h)^{\infty}}\right|\right]\]
}Now, start with the equality \textit{\[
\sum_{n\in\mathbb{N}}\chi^{2}\left(\frac{n-n_{0}}{\left|\ln h\right|^{1-\gamma^{\prime}}}\right)=\sum_{n\in\mathbb{Z}}\chi^{2}\left(\frac{n-n_{0}}{\left|\ln h\right|^{1-\gamma^{\prime}}}\right)-\sum_{n=-\infty}^{-1}\chi^{2}\left(\frac{n-n_{0}}{\left|\ln h\right|^{1-\gamma^{\prime}}}\right)\]
\[
=\left|\ln h\right|^{1-\gamma^{\prime}}\mathfrak{F}\left(\chi^{2}\right)(0)+O\left|\frac{1}{\ln(h)^{\infty}}\right|-\sum_{n=-\infty}^{-1}\chi^{2}\left(\frac{n-n_{0}}{\left|\ln h\right|^{1-\gamma^{\prime}}}\right).\]
}Since \foreignlanguage{english}{the function $\mathfrak{F}\left(\chi^{2}\right)$
is} even\foreignlanguage{english}{ :}\[
\sum_{n=-\infty}^{-1}\chi^{2}\left(\frac{n-n_{0}}{\left|\ln h\right|^{1-\gamma^{\prime}}}\right)=\sum_{n=1}^{+\infty}\chi^{2}\left(\frac{n+n_{0}}{\left|\ln h\right|^{1-\gamma^{\prime}}}\right)\leq B_{k}\left|\ln h\right|^{(1-\gamma^{\prime})k}\sum_{n=1}^{+\infty}\frac{1}{\left|n+n_{0}\right|^{k}}.\]
And, \[
{\displaystyle B_{k}\left|\ln h\right|^{(1-\gamma^{\prime})k}\sum_{n=1}^{+\infty}\frac{1}{\left|n+n_{0}\right|^{k}}}\leq{\displaystyle B_{k}\left|\ln h\right|^{(1-\gamma^{\prime})k}}\int_{0}^{\infty}\frac{du}{(u+n_{0})^{k}}\]
\[
=\left|\ln h\right|^{(1-\gamma^{\prime})(k-1)}\frac{B_{k}}{(k-1)}\frac{1}{n_{0}^{k-1}}.\]
Now, since for $h\rightarrow0$ we have $n_{0}\sim\frac{N}{h}$ ,
we get, for $h\rightarrow0$\[
\left|\ln h\right|^{(1-\gamma^{\prime})k}\frac{B_{k}}{(k-1)}\frac{1}{n_{0}^{k-1}}\sim\frac{B_{k}}{N^{k-1}(k-1)}h^{k-1}\left|\ln h\right|^{(1-\gamma^{\prime})k}\]
consequently \foreignlanguage{english}{\[
\sum_{n=-\infty}^{-1}\chi^{2}\left(\frac{n-n_{0}}{\left|\ln h\right|^{1-\gamma^{\prime}}}\right)=O\left|\frac{1}{\ln(h)^{\infty}}\right|.\]
So, we proove that}\textit{\[
\left\Vert \chi\left(\frac{n-n_{0}}{\left|\ln h\right|^{1-\gamma^{\prime}}}\right)\right\Vert _{\ell^{2}(\mathbb{N})}^{2}=\left|\ln h\right|^{1-\gamma^{\prime}}\mathfrak{F}\left(\chi^{2}\right)(0)+O\left|\frac{1}{\ln(h)^{\infty}}\right|;\]
}hence $K_{h}=\frac{1}{\sqrt{\mathfrak{F}\left(\chi^{2}\right)(0)}\left|\ln h\right|^{\frac{1-\gamma^{\prime}}{2}}}+O\left|\frac{1}{\ln(h)^{\infty}}\right|.$
For finish, we have \textit{\[
\left\Vert a_{n}\right\Vert _{\ell^{2}(\mathbb{N})}^{2}=K_{h}^{2}\left|\ln h\right|^{1-\gamma^{\prime}}\left[\mathfrak{F}\left(\chi^{2}\right)(0)+O\left|\frac{1}{\ln(h)^{\infty}}\right|\right]=1+O\left|\frac{1}{\ln(h)^{\infty}}\right|.\]
}
\end{proof}

\subsection{The set $\Delta$}
\begin{defn}
Let us define the sets of integers $\Delta=\Delta(h,E)$ and $\Gamma=\Gamma(h,E)$
by :\[
\Delta:=\left\{ n\in\mathbb{N},\,\left|n-n_{0}\right|\leq\left|\ln(h)\right|^{\gamma}\right\} \subset\mathbb{N}\]
and\[
\Gamma:=\mathbb{N}-\Delta\]
where $\gamma$ is a real number such that $\gamma<1$ and $\gamma+\gamma^{\prime}>1$.\end{defn}
\begin{rem}
Since $\gamma<1$, we have $\left|\ln(h)\right|^{\gamma}<\left|\ln(h)\right|,$
hence for $h\rightarrow0$ we have\[
\textrm{Card}\left(\mathbf{I}_{h}\right)\thicksim\left|\ln(h)\right|.\]
So, for $h$ small enough we obtain $\Delta\subset\mathbf{I}_{h}$.
On the other hand, since $\gamma+\gamma^{\prime}>1$ we have $\left|\ln(h)\right|^{1-\gamma^{\prime}}\ll\left|\ln(h)\right|^{\gamma};$
this mean that the set $\Delta$ is larger than the localization of
initial state.\end{rem}
\begin{lem}
We have\[
{\displaystyle \sum_{n\in\Gamma}\left|a_{n}\right|^{2}}=O\left|\frac{1}{\ln(h)^{\infty}}\right|.\]
\end{lem}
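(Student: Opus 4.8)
The plan is to bound the tail sum $\sum_{n\in\Gamma}|a_n|^2$ by splitting it into the part where $n$ is still positive but far from $n_0$, and the trivial part $n\leq 0$ (already handled implicitly in Theorem 4.13). The key point is that $\Gamma$ consists precisely of those $n\in\mathbb{N}$ with $|n-n_0|>|\ln h|^\gamma$, and by definition $a_n=K_h\,\chi\!\left(\frac{n-n_0}{|\ln h|^{1-\gamma'}}\right)$ with $\chi\in\mathcal{S}(\mathbb{R})$. Setting $\varepsilon:=|\ln h|^{-(1-\gamma')}\in\,]0,1]$ for $h$ small, the argument of $\chi$ is $(n-n_0)\varepsilon$, and on $\Gamma$ we have $|(n-n_0)\varepsilon|>|\ln h|^{\gamma}\cdot|\ln h|^{-(1-\gamma')}=|\ln h|^{\gamma+\gamma'-1}$, which tends to $+\infty$ since $\gamma+\gamma'>1$. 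So every term in the sum is evaluated at a point going to infinity, uniformly, and Schwartz decay should kill the whole sum.

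\textbf{Key steps.} First I would write $\sum_{n\in\Gamma}|a_n|^2=K_h^2\sum_{n\in\mathbb{N},\,|n-n_0|>|\ln h|^\gamma}\chi^2\!\left(\frac{n-n_0}{|\ln h|^{1-\gamma'}}\right)$ and recall from Theorem 4.13 that $K_h^2=O(|\ln h|^{-(1-\gamma')})$, in particular $K_h^2=O(1)$; so it suffices to show the remaining sum is $O(|\ln h|^{-\infty})$. Second, I would apply the same trick as in Lemma 4.11: for any $N\in\mathbb{N}$, Schwartz decay gives a constant $B_N$ with $\chi^2(x)\leq B_N(1+x^2)^{-N}$, whence
\[
\sum_{\substack{n\in\mathbb{Z}\\ |n-n_0|>|\ln h|^\gamma}}\chi^2\!\left(\frac{n-n_0}{|\ln h|^{1-\gamma'}}\right)\leq B_N\,|\ln h|^{2N(1-\gamma')}\sum_{\substack{k\in\mathbb{Z}\\ |k|>|\ln h|^\gamma}}\frac{1}{|k|^{2N}}.
\]
Third, comparing the last sum with the integral $\int_{|\ln h|^\gamma}^{\infty}u^{-2N}\,du=O\!\left(|\ln h|^{\gamma(1-2N)}\right)$, the whole bound becomes $O\!\left(|\ln h|^{2N(1-\gamma')+\gamma-2N\gamma}\right)=O\!\left(|\ln h|^{\gamma-2N(\gamma+\gamma'-1)}\right)$. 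Since $\gamma+\gamma'-1>0$, the exponent $\to-\infty$ as $N\to\infty$, so the sum is $O(|\ln h|^{-\infty})$; multiplying by $K_h^2=O(1)$ finishes the estimate. (The sum over $\mathbb{N}$ is dominated by the sum over $\mathbb{Z}$, and one also absorbs the $n\leq 0$ contribution this way, exactly as in the proof of Theorem 4.13.)

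\textbf{Main obstacle.} There is no serious obstacle here — the estimate is a routine Schwartz-tail bound, structurally identical to Lemma 4.11 and to the tail argument already carried out inside the proof of Theorem 4.13. The only thing to be careful about is bookkeeping the exponents: one must check that the gain $|\ln h|^{-2N(\gamma+\gamma'-1)}$ from the summation range genuinely beats the loss $|\ln h|^{2N(1-\gamma')}$ coming from rescaling the argument of $\chi$, and this is exactly where the hypothesis $\gamma+\gamma'>1$ (rather than merely $\gamma'<1$) is used. Once that inequality is in hand, letting $N\to\infty$ delivers the $O(|\ln h|^{-\infty})$ claim.
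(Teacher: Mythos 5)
Your proof is correct and follows essentially the same route as the paper: a Schwartz-tail estimate obtained by trading powers of $(n-n_0)$ against the rescaling factor and letting $N\to\infty$, with the hypothesis $\gamma+\gamma'>1$ doing exactly the work you identify. In fact your version is the more carefully adapted one: the paper's own proof text carries over cut-off scales $h^{\delta-1}$, $h^{\delta'-1}$ (apparently from a power-of-$h$ localization in an earlier setting) and concludes $O(h^{\infty})$, whereas your exponent bookkeeping correctly uses the scales $\left|\ln h\right|^{\gamma}$ and $\left|\ln h\right|^{1-\gamma'}$ actually defined in the paper and yields the stated $O\left|\frac{1}{\ln(h)^{\infty}}\right|$.
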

\begin{proof}
The starting point is the following inequality :\[
{\displaystyle \sum_{n\in\Gamma}\left|a_{n}\right|^{2}}\leq{\displaystyle \sum_{n\in\mathbb{Z},\,\left|n-n_{0}\right|>h^{\delta-1}}\left|a_{n}\right|^{2}}.\]
Next, with the same argument as in the lemma 4.11, we show that for
all integer $N\geq1$\foreignlanguage{english}{\[
{\displaystyle \sum_{n\in\mathbb{Z}}\left(\frac{n-n_{0}}{h^{\delta^{\prime}-1}}\right)^{2N}}\left|a_{n}\right|^{2}=O(1).\]
}Without loss generality, we may suppose that $n_{0}=0$. Next we
write :\foreignlanguage{english}{\[
{\displaystyle \sum_{n\in\mathbb{Z},\,\left|n\right|>h^{\delta-1}}\left|a_{n}\right|^{2}}=h^{2N(\delta^{\prime}-1)}\sum_{n\in\mathbb{Z},\,\left|n\right|>h^{\delta-1}}\left|a_{n}\right|^{2}\left(\frac{n}{h^{\delta^{\prime}-1}}\right)^{2N}\frac{1}{n^{2N}}\]
\[
\leq\frac{h^{2N(\delta^{\prime}-1)}}{h^{2N(\delta-1)}}\sum_{n\in\mathbb{Z}}\left|a_{n}\right|^{2}\left(\frac{n}{h^{\delta^{\prime}-1}}\right)^{2N}=O\left(h^{2N(\delta^{\prime}-\delta)}\right).\]
}Since $\delta^{\prime}>\delta,$ we obtain ${\displaystyle \sum_{n\in\Gamma}\left|a_{n}\right|^{2}}=O(h^{\infty}).$
\end{proof}

\section{Order 1 approximation : hyperbolic period}

\subsection{Introduction}

The aim of this section is to study the series :\[
\mathbf{a}\,:\, t\mapsto{\displaystyle \sum_{n\in\mathbb{N}}\left|a_{n}\right|^{2}e^{-it\mathcal{A}_{h}(2\pi n)}.}\]
Unfortunately this function is too difficult to understand. So, in
this section we use an approximation of $t\mapsto\mathbf{a}(t)$,
the tricks is explained in section 2.3. Here, by a Taylor's formula
we get :\[
\mathbf{a}(t)={\displaystyle \sum_{n\in\mathbb{N}}\left|a_{n}\right|^{2}e^{-it\left(\mathcal{A}_{h}(2\pi n_{0})+\mathcal{A}_{h}^{\prime}(2\pi n_{0})2\pi(n-n_{0})+\mathcal{A}_{h}^{\prime\prime}(\zeta)2\pi^{2}(n-n_{0})^{2}\right)}}\]
where $\zeta=\zeta(n,h,E)\in\mathcal{Y}_{h}\left([-1,1]\right)$.

Next, we need the following lemma.
\begin{lem}
Uniformly, on the compact set $[-1,1]$ we have :\[
\lambda\mapsto\left(\mathcal{A}_{h}^{\prime\prime}\circ\mathcal{Y}_{h}\right)(\lambda)=O\left|\frac{1}{\ln(h)^{3}}\right|.\]
\end{lem}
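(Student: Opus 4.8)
The plan is to exploit that $\mathcal{A}_h=\mathcal{Y}_h^{-1}$ and differentiate. On $[-1,1]$ the composition $\mathcal{A}_h\circ\mathcal{Y}_h$ is the identity; differentiating once gives $(\mathcal{A}_h'\circ\mathcal{Y}_h)\,\mathcal{Y}_h'=1$, and differentiating a second time and solving,
\[
\mathcal{A}_h''\circ\mathcal{Y}_h=-\frac{\mathcal{Y}_h''}{(\mathcal{Y}_h')^{3}}.
\]
By Proposition 3.2, $\mathcal{Y}_h'(\lambda)=\ln(h)/\sqrt{-V''(0)}+O(1)$ uniformly on $[-1,1]$, so for $h$ small enough $|\mathcal{Y}_h'(\lambda)|\geq c\,|\ln h|$ uniformly, whence $(\mathcal{Y}_h')^{-3}=O(1/|\ln h|^{3})$. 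The statement therefore reduces to the uniform estimate $\mathcal{Y}_h''(\lambda)=O(1)$ on $[-1,1]$.

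To obtain this I would go back to $\mathcal{Y}_h=f_h-\phi_h$, where $f_h(\lambda)=F_h(\lambda h)$ and $\phi_h(\lambda)=\arccos\!\big(w_h(\lambda)\big)$ with $w_h(\lambda):=\cos(g_h(\lambda))/\sqrt{1+\exp(2\pi\varepsilon(\lambda h)/h)}$, and differentiate each piece twice in $\lambda$. The point is that the two potentially dangerous factors — the $1/h$ in front of the singular actions $\theta_{\pm}=S^{\pm}/h$, and the $\ln h$ multiplying $\varepsilon/h$ — are each absorbed by a compensating factor $h$ produced by the chain rule $\tfrac{d}{d\lambda}[\,g(\lambda h)\,]=h\,g'(\lambda h)$. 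Writing $u(\lambda):=\varepsilon(\lambda h)/h$ and using the expansions $\varepsilon=\sum_j\varepsilon_jh^{j}$, $S^{\pm}=\sum_jS_j^{\pm}h^{j}$ together with $\varepsilon_0'(0)=1/\sqrt{-V''(0)}$, one checks that $u$, $g_h$ and $h^{-1}S^{\pm}(\lambda h)$ are functions of $\lambda$ with first derivative $O(1)$ and second derivative $O(h)$, and that $u(\lambda)$ stays in a fixed compact interval of $\mathbb{R}$ as $h\to0$. Inserting this into
\[
f_h''(\lambda)=\frac{d^{2}}{d\lambda^{2}}\!\left[-\frac{S^{+}(\lambda h)+S^{-}(\lambda h)}{2h}+u(\lambda)\ln h+\arg\Gamma\!\left(\tfrac12+iu(\lambda)\right)\right],
\]
the action term is $O(h)$, the term $u(\lambda)\ln h$ gives $u''(\lambda)\ln h=O(h|\ln h|)=O(1)$, and the $\arg\Gamma$ term gives a combination of $u''(\lambda)$ and $(u'(\lambda))^{2}$ with coefficients $\tfrac{d}{du}\arg\Gamma(\tfrac12+iu)$ and its derivative at $u(\lambda)$, both bounded on the relevant compact range of $u$; hence $f_h''=O(1)$ uniformly.

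For $\phi_h$ I would first note that $\sqrt{1+\exp(2\pi u(\lambda))}\geq\sqrt{1+e^{-C}}>1$ for a constant $C$ independent of $h$ and $\lambda$, so $|w_h(\lambda)|\leq\rho$ for a fixed $\rho<1$; thus $w_h$ takes values in a compact subset of $(-1,1)$ on which $\arccos$ is $\mathcal{C}^{\infty}$ with bounded derivatives, and
\[
\phi_h''=-\frac{w_h''}{\sqrt{1-w_h^{2}}}-\frac{w_h\,(w_h')^{2}}{(1-w_h^{2})^{3/2}}=O\!\big(|w_h''|+|w_h'|^{2}\big).
\]
Since $w_h$ is $\cos(g_h)$ times $(1+e^{2\pi u})^{-1/2}$, the latter a smooth function of $u$ with bounded derivatives, and $g_h',g_h'',u',u''$ are all $O(1)$, we get $w_h'=O(1)$, $w_h''=O(1)$, hence $\phi_h''=O(1)$ uniformly. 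Combining, $\mathcal{Y}_h''=f_h''-\phi_h''=O(1)$ on $[-1,1]$, which with the first display yields $(\mathcal{A}_h''\circ\mathcal{Y}_h)(\lambda)=O(1/|\ln h|^{3})$.

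The main obstacle is precisely the bookkeeping for $f_h''$: one must check honestly that two $\lambda$-differentiations of $f_h$ never produce anything larger than $O(1)$. The delicate points are (i) that the $\ln h$ in $\tfrac{\varepsilon(\lambda h)}{h}\ln h$ survives the second derivative only as $u''(\lambda)\ln h$, i.e. multiplied by an $O(h)$ quantity, not as the $O(|\ln h|)$ term $u'(\lambda)\ln h$ coming from the first derivative; and (ii) that no hidden large factor appears in $\tfrac{d^{2}}{d\lambda^{2}}\arg\Gamma(\tfrac12+i\varepsilon(\lambda h)/h)$ — this is where one uses that $\varepsilon(\lambda h)/h$ is bounded (so that $\Gamma$, its logarithmic derivative and the latter's derivative are all controlled along the relevant segment $\{\tfrac12+iu\}$) and that $\varepsilon(\lambda h)/h$, as a function of $\lambda$, differentiates with the benign factors recorded above.
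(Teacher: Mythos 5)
Your proposal is correct and follows essentially the same route as the paper: the inverse-function formula $\mathcal{A}_h''\circ\mathcal{Y}_h=-\mathcal{Y}_h''/(\mathcal{Y}_h')^{3}$, the reduction to the uniform bounds $\mathcal{Y}_h''=O(1)$ and $\mathcal{Y}_h'=\ln(h)/\sqrt{-V''(0)}+O(1)$, and a term-by-term second differentiation of $f_h$ and of the $\arccos$ part (the paper organizes the latter as a quotient $N_h/D_h$ with $1/D_h=O(1)$, which is the same observation as your uniform bound $|w_h|\leq\rho<1$). Your bookkeeping of the compensating factors of $h$ in the action and $\varepsilon\ln h$ terms, and the boundedness of $\varepsilon(\lambda h)/h$ controlling the digamma contributions, matches the paper's argument.
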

\begin{proof}
Derivatives formulas gives for all $x\in\mathcal{Y}_{h}\left([-1,1]\right),$
the equality\[
\mathcal{A}_{h}^{\prime\prime}(x)=\frac{-\left(\mathcal{Y}_{h}^{\prime\prime}\circ\mathcal{A}_{h}\right)(x)}{\left(\mathcal{Y}_{h}^{\prime}\circ\mathcal{A}_{h}\right)^{3}(x)};\]
hence, for $\lambda\in[-1,1]$\[
\left(\mathcal{A}_{h}^{\prime\prime}\circ\mathcal{Y}_{h}\right)(\lambda)=\frac{-\mathcal{Y}_{h}^{\prime\prime}(\lambda)}{\left(\mathcal{Y}_{h}^{\prime}\right)^{3}(\lambda)}.\]
First, for all $\lambda\in\left[-1,1\right]$ \[
\mathcal{Y}_{h}^{\prime\prime}(\lambda)=-h^{2}\frac{\theta_{+}^{\prime\prime}(\lambda h)+\theta_{-}^{\prime\prime}(\lambda h)}{2}+h\varepsilon^{\prime\prime}(\lambda h)\ln(h)\]
\[
+\frac{\partial^{2}}{\partial\lambda^{2}}\left[\arg\left(\Gamma\left(\frac{1}{2}+i\frac{\varepsilon(\lambda h)}{h}\right)\right)\right]-\frac{\partial^{2}}{\partial\lambda^{2}}\left[\arccos\left(\frac{\cos\left(g_{h}(\lambda)\right)}{\sqrt{1+\exp\left(2\pi\varepsilon(\lambda h)/h\right)}}\right)\right].\]
And, for all $\lambda\in\left[-1,1\right]$ we have\[
-h^{2}\frac{\theta_{+}^{\prime\prime}(\lambda h)+\theta_{-}^{\prime}(\lambda h)}{2}=O(h)\]
and \[
h\varepsilon^{\prime\prime}(\lambda h)\ln(h)=O(h\ln(h)).\]
Next, we study the term $\frac{\partial^{2}}{\partial\lambda^{2}}\left[\arg\left(\Gamma\left(\frac{1}{2}+i\frac{\varepsilon(\lambda h)}{h}\right)\right)\right]$
: for all $\lambda\in[-1,1]$ we have :\[
\frac{\partial}{\partial\lambda}\left[\arg\left(\Gamma\left(\frac{1}{2}+i\frac{\varepsilon(\lambda h)}{h}\right)\right)\right]=\frac{\partial}{\partial\lambda}\left[\textrm{Im}\left(\ln\left(\Gamma\left(\frac{1}{2}+i\frac{\varepsilon(\lambda h)}{h}\right)\right)\right)\right]\]
\[
=\textrm{Im}\left[\frac{\partial}{\partial\lambda}\left(\ln\left(\Gamma\left(\frac{1}{2}+i\frac{\varepsilon(\lambda h)}{h}\right)\right)\right)\right]=\textrm{Im}\left[\frac{\Gamma^{\prime}\left(\frac{1}{2}+i\frac{\varepsilon(\lambda h)}{h}\right)i\varepsilon^{\prime}(\lambda h)}{\Gamma\left(\frac{1}{2}+i\frac{\varepsilon(\lambda h)}{h}\right)}\right]\]
\[
=\varepsilon^{\prime}(\lambda h)\textrm{Re}\left[\frac{\Gamma^{\prime}\left(\frac{1}{2}+i\frac{\varepsilon(\lambda h)}{h}\right)}{\Gamma\left(\frac{1}{2}+i\frac{\varepsilon(\lambda h)}{h}\right)}\right]=\varepsilon^{\prime}(\lambda h)\textrm{Re}\left(\Psi\left(\frac{1}{2}+i\frac{\varepsilon(\lambda h)}{h}\right)\right);\]
hence, for all $\lambda\in[-1,1]$ 

\[
\frac{\partial^{2}}{\partial\lambda^{2}}\left[\arg\left(\Gamma\left(\frac{1}{2}+i\frac{\varepsilon(\lambda h)}{h}\right)\right)\right]\]
\[
=\frac{\partial}{\partial\lambda}\left[\varepsilon^{\prime}(\lambda h)\textrm{Re}\left(\Psi\left(\frac{1}{2}+i\frac{\varepsilon(\lambda h)}{h}\right)\right)\right]\]
\[
=h\varepsilon^{\prime\prime}(\lambda h)\textrm{Re}\left(\Psi\left(\frac{1}{2}+i\frac{\varepsilon(\lambda h)}{h}\right)\right)-\left(\varepsilon^{\prime}(\lambda h)\right)^{2}\textrm{Im}\left(\Psi^{(1)}\left(\frac{1}{2}+i\frac{\varepsilon(\lambda h)}{h}\right)\right),\]
wher $\Psi^{(1)}$ is the first-derivative of di-Gamma function ($\Psi(z):=\frac{\Gamma^{\prime}(z)}{\Gamma(z)}$,
see\textbf{ {[}Ab-St{]}}). Hence the function\[
\lambda\mapsto\frac{\partial}{\partial\lambda}\left[\varepsilon^{\prime}(\lambda h)\textrm{Re}\left(\Psi\left(\frac{1}{2}+i\frac{\varepsilon(\lambda h)}{h}\right)\right)\right]\]
is equal to $O(1)$ on the compact set $\left[-1,1\right]$. For finish,
we estimate the term : \[
\lambda\mapsto\frac{\partial^{2}}{\partial\lambda^{2}}\left[\arccos\left(\frac{\cos\left(g_{h}(\lambda)\right)}{\sqrt{1+\exp\left(2\pi\varepsilon(\lambda h)/h\right)}}\right)\right];\]
for all $\lambda\in\left[-1,1\right]$ we obtain\[
\frac{\partial^{2}}{\partial\lambda^{2}}\left[\arccos\left(\frac{\cos\left(g_{h}(\lambda)\right)}{\sqrt{1+\exp\left(2\pi\varepsilon(\lambda h)/h\right)}}\right)\right]=\frac{N_{h}(\lambda)}{D_{h}(\lambda)}\]
where $N_{h}(\lambda)$ is a polynomial (with coefficients does not
depends on $h$) in the variables :\[
\left(\cos\left(g_{h}(\lambda)\right),\sin\left(g_{h}(\lambda)\right),\frac{\partial}{\partial\lambda}\left(g_{h}(\lambda)\right),\frac{\partial^{2}}{\partial\lambda^{2}}\left(g_{h}(\lambda)\right)\right.\]
\[
\left.\frac{\partial}{\partial\lambda}\left(\varepsilon(\lambda h)/h\right),\frac{\partial^{2}}{\partial\lambda^{2}}\left(\varepsilon(\lambda h)/h\right),\exp\left(2\pi\varepsilon(\lambda h)/h\right)\right).\]
Hence $\lambda\mapsto N_{h}(\lambda)=O(1)$ on the compact set $\left[-1,1\right].$
Next we have\[
D_{h}(\lambda)=\left(1+\exp\left(2\pi\varepsilon(\lambda h)/h\right)\right)^{2}\left(1+\exp\left(2\pi\varepsilon(\lambda h)/h\right)-\cos^{2}\left(g_{h}(\lambda)\right)\right)^{\frac{3}{2}};\]
since the\foreignlanguage{english}{ function $\lambda\mapsto\exp\left(2\pi\varepsilon(\lambda h)/h\right)>0$
is equal to $O(1)$ on the compact set }$\left[-1,1\right],$ \foreignlanguage{english}{we
see easily that on the compact set }$\left[-1,1\right]$ we have\foreignlanguage{english}{
}\[
\lambda\mapsto\frac{1}{D_{h}(\lambda)}=O(1).\]
Next, on the set $\left[-1,1\right]$\foreignlanguage{english}{\[
\lambda\mapsto\frac{\partial^{2}}{\partial\lambda^{2}}\left[\arccos\left(\frac{\cos\left(g_{h}(\lambda)\right)}{\sqrt{1+\exp\left(2\pi\varepsilon(\lambda h)/h\right)}}\right)\right]=O(1).\]
}Then on the compact set $\left[-1,1\right]$ we obtain that $\lambda\mapsto\mathcal{Y}_{h}^{\prime\prime}(\lambda)=O(1).$
For finish, since, uniformly on the compact set $[-1,1]$ we have
the equality\[
\mathcal{Y}_{h}^{\prime}(\lambda)=\frac{\ln(h)}{\sqrt{-V^{''}(0)}}+O(1);\]
we deduce that $\lambda\mapsto\left(\mathcal{A}_{h}^{\prime\prime}\circ\mathcal{Y}_{h}\right)(\lambda)=O\left|\frac{1}{\ln(h)^{3}}\right|$
on $[-1,1].$ 
\end{proof}

\subsection{Definition of a first order approximation time scale}

We have the following approximation result :
\begin{prop}
Let $\alpha$ a real number such that $\alpha<3-2\gamma$. Then, uniformly
for all $t\in\left[0,\left|\ln(h)\right|^{\alpha}\right]$ we have
:\[
\mathbf{a}(t)={\displaystyle \sum_{n\in\mathbb{N}}\left|a_{n}\right|^{2}e^{-it\left(\mathcal{A}_{h}(2\pi n_{0})+\mathcal{A}_{h}^{\prime}(2\pi n_{0})2\pi(n-n_{0})\right)}}+O\left(\left|\ln(h)\right|^{\alpha+2\gamma-3}\right).\]
\end{prop}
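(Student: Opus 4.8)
The plan is to start from the exact second-order Taylor expansion of $\mathbf{a}(t)$ recalled just above the statement,
\[
\mathbf{a}(t)=\sum_{n\in\mathbb{N}}\left|a_{n}\right|^{2}e^{-it\left(\mathcal{A}_{h}(2\pi n_{0})+2\pi\mathcal{A}_{h}^{\prime}(2\pi n_{0})(n-n_{0})+2\pi^{2}\mathcal{A}_{h}^{\prime\prime}(\zeta_{n})(n-n_{0})^{2}\right)},
\]
with $\zeta_{n}=\zeta(n,h,E)\in\mathcal{Y}_{h}([-1,1])$, and to compare it term by term with the claimed first-order series $\mathbf{a}_{1}(t):=\sum_{n}|a_{n}|^{2}e^{-it(\mathcal{A}_{h}(2\pi n_{0})+2\pi\mathcal{A}_{h}^{\prime}(2\pi n_{0})(n-n_{0}))}$. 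Pulling the common phase out of each summand and using the elementary inequality $|e^{ix}-1|\le|x|$ gives
\[
\left|\mathbf{a}(t)-\mathbf{a}_{1}(t)\right|\le 2\pi^{2}\left|t\right|\sum_{n\in\mathbb{N}}\left|a_{n}\right|^{2}\bigl|\mathcal{A}_{h}^{\prime\prime}(\zeta_{n})\bigr|\,(n-n_{0})^{2}.
\]

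Next I would feed in Lemma 5.1. Each $\zeta_{n}$ lies in $\mathcal{Y}_{h}([-1,1])$, and since $\mathcal{Y}_{h}$ is a bijection onto its image (Proposition 3.2) we may write $\zeta_{n}=\mathcal{Y}_{h}(\lambda_{n})$ with $\lambda_{n}\in[-1,1]$; hence $\mathcal{A}_{h}^{\prime\prime}(\zeta_{n})=(\mathcal{A}_{h}^{\prime\prime}\circ\mathcal{Y}_{h})(\lambda_{n})=O(|\ln h|^{-3})$ uniformly in $n$ and $h$. Combined with $|t|\le|\ln h|^{\alpha}$ this yields $\left|\mathbf{a}(t)-\mathbf{a}_{1}(t)\right|\le C|\ln h|^{\alpha-3}\sum_{n\in\mathbb{N}}|a_{n}|^{2}(n-n_{0})^{2}$. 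I then split the last sum along $\mathbb{N}=\Delta\sqcup\Gamma$. On $\Delta$ one has $(n-n_{0})^{2}\le|\ln h|^{2\gamma}$ by the very definition of $\Delta$, and $\sum_{n\in\Delta}|a_{n}|^{2}\le\sum_{n\in\mathbb{N}}|a_{n}|^{2}=1+O(|\ln h|^{-\infty})$ by Theorem 4.13, so the $\Delta$-contribution is $O(|\ln h|^{\alpha+2\gamma-3})$, precisely the announced error. For the $\Gamma$-part I reuse the Schwartz-decay mechanism of Lemma 4.11 / Lemma 4.14: writing $|a_{n}|^{2}(n-n_{0})^{2}=K_{h}^{2}|\ln h|^{2(1-\gamma^{\prime})}\widetilde\chi^{2}\bigl((n-n_{0})/|\ln h|^{1-\gamma^{\prime}}\bigr)$ with $\widetilde\chi(x)=x\chi(x)\in\mathcal{S}(\mathbb{R})$, and summing over $|n-n_{0}|>|\ln h|^{\gamma}$, the rapid decay of $\widetilde\chi$ beats the polynomial prefactor and gives $\sum_{n\in\Gamma}|a_{n}|^{2}(n-n_{0})^{2}=O(|\ln h|^{-\infty})$, which is absorbed into $O(|\ln h|^{\alpha+2\gamma-3})$. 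Since none of these estimates involves $t$ except through $|t|\le|\ln h|^{\alpha}$, the bound is uniform on $[0,|\ln h|^{\alpha}]$, and adding the two pieces finishes the proof.

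The only genuinely delicate point is the bookkeeping of exponents in the $\Delta$-sum: the factor $|\ln h|^{\alpha}$ comes from the time window, $|\ln h|^{2\gamma}$ from the worst-case size of $(n-n_{0})^{2}$ on $\Delta$, and $|\ln h|^{-3}$ from Lemma 5.1, and these combine to exactly $|\ln h|^{\alpha+2\gamma-3}$ — which is what forces the hypothesis $\alpha<3-2\gamma$ if one wants the error to vanish. Everything else (the $|e^{ix}-1|\le|x|$ step, and the handling of the $\Gamma$-tail with the extra quadratic weight) is routine once Lemmas 4.14 and 5.1 are in hand; the quadratic weight on $\Gamma$ is harmless because multiplying the Schwartz profile by a fixed polynomial in $n-n_{0}$ costs only a fixed power of $|\ln h|^{1-\gamma^{\prime}}$ against a gain of $|\ln h|^{-\infty}$.
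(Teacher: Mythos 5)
Your argument is correct and follows essentially the same route as the paper: the same Taylor--Lagrange expansion with the remainder controlled by Lemma 5.1, the same $\Delta\sqcup\Gamma$ splitting, and the same exponent bookkeeping $|\ln h|^{\alpha}\cdot|\ln h|^{2\gamma}\cdot|\ln h|^{-3}$. The only (harmless) deviation is that you apply $|e^{ix}-1|\le|x|$ before splitting, so on $\Gamma$ you need the weighted tail $\sum_{n\in\Gamma}|a_{n}|^{2}(n-n_{0})^{2}=O(|\ln h|^{-\infty})$ rather than the unweighted Lemma 4.15 (the paper instead uses $|e^{ix}-1|\le 2$ on $\Gamma$), but your Schwartz-decay justification of that weighted estimate is sound.
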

\begin{proof}
Let us introduce the difference $\varepsilon(t):=\varepsilon(t,h)$
defined by : \[
\varepsilon(t):=\left|{\displaystyle \sum_{n\in\mathbb{N}}\left|a_{n}\right|^{2}e^{-it\mathcal{A}_{h}(2\pi n)}}-{\displaystyle \sum_{n\in\mathbb{N}}\left|a_{n}\right|^{2}e^{-it\left(\mathcal{A}_{h}(2\pi n_{0})+\mathcal{A}_{h}^{\prime}(2\pi n_{0})2\pi(n-n_{0})\right)}}\right|.\]
Taylor-Lagrange's formula give the existence of a real number $\zeta=\zeta(n,h,E)\in\mathcal{Y}_{h}\left([-1,1]\right)$
such that \[
\mathcal{A}_{h}(2\pi n)=\mathcal{A}_{h}(2\pi n_{0})+\mathcal{A}_{h}^{\prime}(2\pi n_{0})2\pi(n-n_{0})+\mathcal{A}_{h}^{\prime\prime}(\zeta)2\pi^{2}(n-n_{0})^{2};\]
hence for all $t\geq0$ we get \[
\varepsilon(t)=\left|{\displaystyle \sum_{n\in\mathbb{N}}\left|a_{n}\right|^{2}e^{-it\left(\mathcal{A}_{h}(2\pi n_{0})+\mathcal{A}_{h}^{\prime}(2\pi n_{0})2\pi(n-n_{0})+\mathcal{A}_{h}^{\prime\prime}(\zeta)2\pi^{2}(n-n_{0})^{2}\right)}}\right.\]
\[
\left.-{\displaystyle \sum_{n\in\mathbb{N}}\left|a_{n}\right|^{2}e^{-it\left(\mathcal{A}_{h}(2\pi n_{0})+\mathcal{A}_{h}^{\prime}(2\pi n_{0})2\pi(n-n_{0})\right)}}\right|\]
\[
=\left|\sum_{n\in\mathbb{N}}\left|a_{n}\right|^{2}e^{-it\left(\mathcal{A}_{h}(2\pi n_{0})+\mathcal{A}_{h}^{\prime}(2\pi n_{0})2\pi(n-n_{0})\right)}\left[e^{-it\mathcal{A}_{h}^{\prime\prime}(\zeta)2\pi^{2}(n-n_{0})^{2}}-1\right]\right|.\]
With the sets $\Gamma,$$\Delta$ and by triangular inequality, we
obtain for all $t\geq0$ \[
\varepsilon(t)\leq\left|{\displaystyle \sum_{n\in\Delta}\left|a_{n}\right|^{2}e^{-it\left(\mathcal{A}_{h}(2\pi n_{0})+\mathcal{A}_{h}^{\prime}(2\pi n_{0})2\pi(n-n_{0})\right)}\left[e^{-it\mathcal{A}_{h}^{\prime\prime}(\zeta)2\pi^{2}(n-n_{0})^{2}}-1\right]}\right|\]
\begin{equation}
+\left|{\displaystyle \sum_{n\in\Gamma}\left|a_{n}\right|^{2}e^{-it\left(\mathcal{A}_{h}(2\pi n_{0})+\mathcal{A}_{h}^{\prime}(2\pi n_{0})2\pi(n-n_{0})\right)}\left[e^{-it\mathcal{A}_{h}^{\prime\prime}(\zeta)2\pi^{2}(n-n_{0})^{2}}-1\right]}\right|.\label{eq:-1}\end{equation}
First, look at the term the right part of the inequality above, for
all $t\geq0$ we have \foreignlanguage{english}{\[
\left|{\displaystyle \sum_{n\in\Gamma}\left|a_{n}\right|^{2}e^{-it\left(\mathcal{A}_{h}(2\pi n_{0})+\mathcal{A}_{h}^{\prime}(2\pi n_{0})2\pi(n-n_{0})\right)}\left[e^{-it\mathcal{A}_{h}^{\prime\prime}(\zeta)2\pi^{2}(n-n_{0})^{2}}-1\right]}\right|\]
}\[
\leq2\sum_{n\in\Gamma}\left|a_{n}\right|^{2}=O\left|\frac{1}{\ln(h)^{\infty}}\right|\]
by the lemma 4.15. 

Now look at the left term of the inequality above. With the lemma
5.1: for all integer $n\in\Delta$ and for all real number $t\in\left[0,\left|\ln(h)\right|^{\alpha}\right]$
we have\[
t\mathcal{A}_{h}^{\prime\prime}(\zeta)2\pi^{2}(n-n_{0})^{2}\leq M\left|\ln(h)\right|^{\alpha+2\gamma-3}\]
where $M>0$. Consequently, for all integer $n\in\Delta$ and for
all $t\in\left[0,\left|\ln(h)\right|^{\alpha}\right]$ we have\[
e^{-it\mathcal{A}_{h}^{\prime\prime}(\zeta)2\pi^{2}(n-n_{0})^{2}}-1=O\left(\left|\ln(h)\right|^{\alpha+2\gamma-3}\right);\]
hence for all $t\in\left[0,\left|\ln(h)\right|^{\alpha}\right]$\foreignlanguage{english}{\[
\left|{\displaystyle \sum_{n\in\Delta}\left|a_{n}\right|^{2}e^{-it\left(\mathcal{A}_{h}(2\pi n_{0})+\mathcal{A}_{h}^{\prime}(2\pi n_{0})2\pi(n-n_{0})\right)}\left[e^{-it\mathcal{A}_{h}^{\prime\prime}(\zeta)2\pi^{2}(n-n_{0})^{2}}-1\right]}\right|\]
}\[
\leq O\left(\left|\ln(h)\right|^{\alpha+2\gamma-3}\right)\sum_{n\in\Delta}\left|a_{n}\right|^{2}\]
\[
\leq O\left(\left|\ln(h)\right|^{\alpha+2\gamma-3}\right)\sum_{n\in\mathbb{N}}\left|a_{n}\right|^{2}=O\left(\left|\ln(h)\right|^{\alpha+2\gamma-3}\right).\]
So, for all $t\in\left[0,\left|\ln(h)\right|^{\alpha}\right]$ we
get finally \[
\varepsilon(t)=O\left(\left|\ln(h)\right|^{\alpha+2\gamma-3}\right).\]

\end{proof}
We conclude that, the principal term of the partial autocorrelation
$\mathbf{a}(t)$ in the time-scale $\left[0,\left|\ln(h)\right|^{\alpha}\right]$
is:
\begin{defn}
The principal term of the partial autocorrelation $\mathbf{a}(t)$
is defined by :\[
\mathbf{a_{1}\,:\,}t\mapsto\sum_{n\in\mathbb{N}}\left|a_{n}\right|^{2}e^{-it\left(\mathcal{A}_{h}(2\pi n_{0})+\mathcal{A}_{h}^{\prime}(2\pi n_{0})2\pi(n-n_{0})\right)}.\]
We also define the function $\mathbf{\widetilde{a_{1}}}$ by \[
\mathbf{\mathbf{\widetilde{a_{1}}}\,:\,}t\mapsto\sum_{n\in\mathbb{N}}\left|a_{n}\right|^{2}e^{-it\mathcal{A}_{h}^{\prime}(2\pi n_{0})2\pi(n-n_{0})}.\]

\end{defn}
Hence\[
\mathbf{\mathbf{a_{1}}}(t)=e^{-it\mathcal{A}_{h}(2\pi n_{0})}\mathbf{\mathbf{\widetilde{a_{1}}}}(t)\mathbf{\mathbf{.}}\]
Now, we study this serie in details.

\subsection{Periodicity of the principal term}

Clearly, the function $t\mapsto\left|\mathbf{\mathbf{a_{1}}}(t)\right|$
is $1/\mathcal{A}_{h}^{\prime}(2\pi n_{0})$-periodic, so the approximation
$\mathbf{a}_{\mathbf{1}}$ defines an important characteristic time
scale :
\begin{defn}
Let us define the hyperbolic period $T_{hyp}=T_{hyp}(h,E)$ by \[
T_{hyp}:=\frac{1}{\mathcal{A}_{h}^{\prime}(2\pi n_{0})}.\]

\end{defn}
Note that, since\foreignlanguage{english}{$\mathcal{A}_{h}^{\prime}(\zeta)=2\pi/\left(\frac{\left|\ln(h)\right|}{\sqrt{-V^{''}(0)}}+O(1)\right);$}
hence for $h\rightarrow0$ we have $T_{hyp}\sim K\left|\ln(h)\right|;$
where $K>0$ (not depends on $h$).

\subsection{Geometric interpretation of the period}

The period $T_{hyp}$ correspond to the period of the classical flow.
The term $\left|\ln(h)\right|$ is the signature of the hyperbolic
singularity : indeed, the term $\left|\ln(h)\right|$ correspond to
the period of the classical flow with a initial point on the disc
$D(O,\sqrt{h}):=\left\{ (x,\xi)\in\mathbb{R}^{2},\,\sqrt{x^{2}+\xi^{2}}=\sqrt{h}\right\} $.
\begin{thm}
\textbf{{[}Lab3{]}. }Let us consider the point $m_{h}=\left(\sqrt{h},0\right)\in T^{*}\mathbb{R}$.
Then the Hamiltonian's flow associated to the function $p$ and with
initial point $m_{h}$ is periodic and the period $\tau_{h}$ verify,
for $h\rightarrow0$, the following equivalent :\[
\tau_{h}\sim\frac{\left|\ln\left(h\right)\right|}{K}\]
where the constant $K\neq0$ and not depend on $h$.\end{thm}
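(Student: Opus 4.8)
The plan is to compute the period of the Hamiltonian flow of $p(x,\xi)=\xi^2/2+V(x)$ on the energy level $p^{-1}(h)$ directly, by reducing to the local normal form near the hyperbolic point. Near the origin, after a symplectic change of coordinates, the symbol $p$ is (up to a smooth nonvanishing factor and higher-order terms) equivalent to $q(x,\xi)=x\xi$; under this normal form the flow of $q$ is the hyperbolic dilation $(x,\xi)\mapsto(e^{t}x,e^{-t}\xi)$. The key point is that the time spent by a trajectory of $p$ inside a fixed neighbourhood of the singular point diverges logarithmically as the energy level approaches the critical value $p(0,0)=0$, and the coefficient of $|\ln h|$ is governed by the Hessian eigenvalue, equivalently by $\sqrt{-V''(0)}$.

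Concretely, I would split the closed orbit $p^{-1}(h)$ into two pieces: a ``far'' piece, staying at distance $\geq \epsilon_0$ from the origin, on which the flow spends a time $\tau_h^{\mathrm{far}}$ that converges to a finite constant as $h\to 0$ (since $p^{-1}(h)\to p^{-1}(0)=\Lambda_0$ smoothly away from the singularity, and the flow there has no fixed point, so $dt = dx/|\dot x|$ integrates to something bounded); and a ``near'' piece inside $D(O,\epsilon_0)$. On the near piece I use the normal form: in the $(x,\xi)$ coordinates where $p = c(x,\xi)\, x\xi + \cdots$ with $c(0,0)=1/\sqrt{-V''(0)}$ (this is exactly the constant appearing in $\varepsilon_0'(0)=1/\sqrt{-V''(0)}$ recorded before Proposition 3.2), the branch of $p^{-1}(h)$ is $\xi \approx h\sqrt{-V''(0)}/x$, and the transit time is
\[
\int \frac{dx}{\dot x} = \int \frac{dx}{\partial_\xi p} \approx \int_{|x|\sim \sqrt h}^{|x|\sim \epsilon_0} \frac{dx}{x} \sim \tfrac12\bigl|\ln h\bigr|\cdot\sqrt{-V''(0)} + O(1),
\]
the factor $\sqrt{-V''(0)}$ coming from the normal-form constant $c(0,0)$ and the endpoint $|x|\sim\sqrt h$ coming from the initial point $m_h=(\sqrt h,0)$ (since $p(m_h)=h$ forces the orbit through $m_h$ to be $p^{-1}(h)$, and near $m_h$ one has $x\sim\sqrt h$). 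Summing the four symmetric near-passages (the ``figure-eight'' $\Lambda_0$ has the singular point with two incoming and two outgoing separatrices, so $p^{-1}(h)$ for small $h>0$ makes two passages close to the saddle, each contributing two $|\ln\sqrt h| = \tfrac12|\ln h|$ logarithmic pieces) gives $\tau_h = K^{-1}|\ln h| + O(1)$ with $K^{-1}$ a positive multiple of $\sqrt{-V''(0)}$; matching with the spectral statement of Theorem 3.6, where $\mathcal{A}_h' \sim 2\pi/(|\ln h|/\sqrt{-V''(0)})$ and $T_{hyp}\sim K|\ln h|$, pins down $K$ and shows $\tau_h\sim |\ln h|/K$.

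The main obstacle is making the ``near/far'' decomposition rigorous and uniform: one must check that the $O(1)$ remainders from the far piece and from the higher-order terms in the normal form $p = c\,x\xi + O(\mathrm{deg}\ 3)$ genuinely stay bounded (not just formally) as $h\to 0$, i.e. that the correction to $dx/\dot x$ near the saddle is integrable against $dx/x$ and contributes only $O(1)$. This is where I would invoke the Colin de Verdière--Parisse normal form and the smoothness of the singular actions $S^{\pm}$ (the $\theta_{\pm}$ in $F_h$) rather than re-deriving everything; alternatively, and probably more cleanly, I would avoid the direct flow computation altogether and argue by consistency: the quantum period $T_{hyp} = 1/\mathcal{A}_h'(2\pi n_0) \sim K|\ln h|$ is the semiclassical shadow of the classical period $\tau_h$ (Egorov/coherent-state propagation along the orbit through $m_h$ up to the Ehrenfest-type time $\sim |\ln h|$), so $\tau_h$ and $T_{hyp}$ must be asymptotically equal, giving $\tau_h \sim |\ln h|/K$ immediately from the already-established asymptotics of $\mathcal{A}_h'$.
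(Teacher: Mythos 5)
First, a point of comparison: the paper does not actually prove this statement. It is quoted from \textbf{[Lab3]} (with the following remark pointing to \textbf{[DB-Ro]} for a similar result), so there is no internal proof to measure your argument against. That said, your near/far decomposition, with the logarithmic transit time near the saddle governed by the Lyapunov exponent, is the standard way to prove such an estimate and is sound in outline: away from a fixed neighbourhood of the origin the level curves converge smoothly to $\Lambda_{0}$ minus the singular point and the transit time is $O(1)$, while near the origin the linearized flow $\dot{x}=\xi$, $\dot{\xi}=-V'(x)\approx -V''(0)x$ has expansion rate $\lambda=\sqrt{-V''(0)}$ and the time to travel between $|x|\sim\sqrt{h}$ and $|x|\sim\epsilon_{0}$ is $\tfrac{1}{2\lambda}\left|\ln h\right|+O(1)$.

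Two of your steps, however, are wrong as written. (1) You assert that $p(m_{h})=h$, hence that the orbit through $m_{h}$ is the outer curve $p^{-1}(h)$ making two passages past the saddle. In fact $p(\sqrt{h},0)=V(\sqrt{h})=\tfrac{1}{2}V''(0)h+O(h^{3/2})<0$: the orbit lies on an energy level $\approx-\tfrac{1}{2}\left|V''(0)\right|h$, \emph{below} the critical value, so it is one of the two inner lobes of the figure-eight, $m_{h}$ is its inner turning point, and it approaches the saddle once per period (two logarithmic half-transits, not four). This changes your count of logarithmic contributions by a factor of two. (2) Your normal-form constant is inverted. If $p=c\,x\xi+\cdots$ near the saddle, then the level $p=E$ corresponds to $x\xi=\varepsilon_{0}(E)\approx E/\sqrt{-V''(0)}$, which forces $c=\sqrt{-V''(0)}$, not $1/\sqrt{-V''(0)}$; the transit time is then $\tfrac{1}{2c}\left|\ln h\right|$, so $\sqrt{-V''(0)}$ belongs in the denominator. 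As your computation stands it yields $\tau_{h}\sim\sqrt{-V''(0)}\left|\ln h\right|$, while your own ``matching'' with $T_{hyp}=1/\mathcal{A}_{h}^{\prime}(2\pi n_{0})\sim\left|\ln h\right|/\sqrt{-V''(0)}$ yields the reciprocal constant: the two halves of your argument contradict rather than confirm each other. Because the theorem only asserts $\tau_{h}\sim\left|\ln h\right|/K$ for \emph{some} nonzero $h$-independent $K$, neither slip is fatal to the conclusion, but both must be repaired before this is a proof; and the closing ``consistency'' argument via Egorov is not an acceptable substitute, since it presupposes precisely the identification of the classical period with $T_{hyp}$ that this theorem is invoked to justify.
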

\begin{rem}
See also the paper \textbf{{[}DB-Ro{]} }for a similar result.
\end{rem}

\subsection{Comparison between hyperbolic period and the time scale $\left[0,\left|\ln(h)\right|^{\alpha}\right]$}

Since $\gamma<1$, we have $1<3-2\gamma$; hence there exist a real
number $\alpha$ such that $\alpha\in\left]1,3-2\gamma\right[$. Consequently,
we get \[
\left|\ln(h)\right|<\left|\ln(h)\right|^{\alpha}<\left|\ln(h)\right|^{3-2\gamma}.\]
So, we can make a {}``good choice'' for $\alpha$ : for $h$ small
enough we have : \[
\left[0,T_{hyp}\right]\subset\left[0,\left|\ln(h)\right|^{\alpha}\right].\]

\subsection{Behaviour of autocorrelation function on a hyperbolic period }

Now, let us study in details the function $\mathbf{a_{1}}(t)$ on
the period \textit{$\left[0,T_{hyp}\right]$. }Start by a technical
proposition :
\begin{prop}
For all $t\geq0$, we have the equality :\[
\sum_{n\in\mathbb{\mathbb{Z}}}\left|a_{n}\right|^{2}e^{-it2\pi(n-n_{0})\frac{1}{T_{hyp}}}=\frac{1}{\mathfrak{F}\left(\chi^{2}\right)(0)}\sum_{\ell\in\mathbb{Z}}\mathfrak{F}\left(\chi^{2}\right)\left(-\left|\ln h\right|^{1-\gamma^{\prime}}\left(\ell+\frac{t}{T_{hyp}}\right)\right).\]
\end{prop}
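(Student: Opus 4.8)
The statement is an exact Poisson-summation identity, obtained by the same computation as in Theorem 4.12 while carrying along the extra phase $e^{-it2\pi(n-n_0)/T_{hyp}}$. So the plan is: unfold $a_n$, apply Poisson summation to a suitably dilated and modulated copy of $\chi^2$, and identify the normalisation constant via Theorem 4.12.

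First I would substitute the coefficients. By Definition 4.10 one has $|a_n|^2 = K_h^2\,\chi^2\bigl((n-n_0)/|\ln h|^{1-\gamma'}\bigr)$, and since the sum runs over all of $\mathbb{Z}$ the shift $m=n-n_0$ gives
\[
\sum_{n\in\mathbb{Z}}|a_n|^2 e^{-it2\pi(n-n_0)/T_{hyp}}
 = K_h^2\sum_{m\in\mathbb{Z}}\chi^2\!\left(\frac{m}{A}\right)e^{-2\pi i m\theta},
\qquad A:=|\ln h|^{1-\gamma'},\ \ \theta:=\frac{t}{T_{hyp}}.
\]
The function $x\mapsto\chi^2(x/A)e^{-2\pi i x\theta}$ is Schwartz (product of a Schwartz function with a bounded smooth modulation), and in the Fourier normalisation used in Theorem 4.12 its transform is $\xi\mapsto A\,\mathfrak{F}(\chi^2)\bigl(A(\xi+\theta)\bigr)$ — a dilation producing the factor $A$ together with a translation of the frequency by $\theta$ coming from the modulation. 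Poisson summation therefore yields
\[
\sum_{m\in\mathbb{Z}}\chi^2\!\left(\frac{m}{A}\right)e^{-2\pi i m\theta}
 = A\sum_{\ell\in\mathbb{Z}}\mathfrak{F}(\chi^2)\bigl(A(\ell+\theta)\bigr)
 = A\sum_{\ell\in\mathbb{Z}}\mathfrak{F}(\chi^2)\bigl(-A(\ell+\theta)\bigr),
\]
the last equality because $\chi$, hence $\chi^2$ and $\mathfrak{F}(\chi^2)$, is even; this also fixes the sign in agreement with Theorem 4.12.

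It then remains to compute the constant. By Theorem 4.12 one has $K_h^2\,A = 1/\mathfrak{F}(\chi^2)(0)$ — strictly speaking up to the customary $O(1/\ln(h)^\infty)$, which comes from the fact that $K_h$ is normalised in $\ell^2(\mathbb{N})$ while the identity is stated over $\mathbb{Z}$, the negative tail being negligible by the argument of Lemma 4.11. Combining this with the two displays above gives
\[
\sum_{n\in\mathbb{Z}}|a_n|^2 e^{-it2\pi(n-n_0)/T_{hyp}}
 = \frac{1}{\mathfrak{F}(\chi^2)(0)}\sum_{\ell\in\mathbb{Z}}\mathfrak{F}(\chi^2)\!\left(-|\ln h|^{1-\gamma'}\!\left(\ell+\frac{t}{T_{hyp}}\right)\right),
\]
which is the claim.

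I do not expect a genuine obstacle: the whole proof is essentially one application of Poisson summation to a Schwartz function. The only points requiring a little care are (i) the bookkeeping of the Fourier convention, so that the modulation lands exactly on the argument $\ell+t/T_{hyp}$ and the scaling factor $A=|\ln h|^{1-\gamma'}$ cancels against $K_h^2$; and (ii) the minor discrepancy—already dealt with in Theorem 4.12—between the $\ell^2(\mathbb{N})$-normalisation of $K_h$ and the sum over $\mathbb{Z}$ appearing in the statement, which is harmless since the extra terms are $O(1/\ln(h)^\infty)$.
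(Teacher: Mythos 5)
Your proof is correct and is essentially the paper's own argument: the paper likewise applies Poisson summation to the modulated, dilated copy of $\chi^{2}$ (packaged there as a function $\Omega_{t}$ whose Fourier transform is computed explicitly, the phase $e^{-2i\pi n_{0}\ell}$ disappearing because $n_{0}\ell\in\mathbb{Z}$ — exactly what your shift $m=n-n_{0}$ achieves upfront). Your remark that the stated ``equality'' really holds only after replacing $K_{h}$ by its leading value $\bigl(\mathfrak{F}(\chi^{2})(0)\,|\ln h|^{1-\gamma^{\prime}}\bigr)^{-1/2}$, up to $O\left|\frac{1}{\ln(h)^{\infty}}\right|$, is accurate; the paper silently makes that substitution in its proof.
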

\begin{proof}
Let us consider the function $\Omega_{t}$ defined by \[
\Omega_{t}:\left\{ \begin{array}{cc}
\mathbb{R}\rightarrow\mathbb{\mathbb{C}}\\
\\x\mapsto\left|a_{x}\right|^{2}e^{-it2\pi(x-n_{0})\frac{1}{T_{hyp}}}\end{array}\right.\]
where $t\in\mathbb{R}$ is a parameter, and let us recall $a_{x}$
is defined by\[
a_{x}=\frac{1}{\sqrt{\mathfrak{F}\left(\chi^{2}\right)(0)}\left|\ln h\right|^{\frac{1-\gamma^{\prime}}{2}}}\chi\left(\frac{x-n_{0}}{\left|\ln h\right|^{1-\gamma^{\prime}}}\right);\]
thus \[
\sum_{n\in\mathbb{\mathbb{Z}}}\left|a_{n}\right|^{2}e^{-it2\pi(n-n_{0})\frac{1}{T_{hyp}}}=\sum_{n\in\mathbb{Z}}\Omega_{t}(n).\]
So clearly, the function $\Omega_{t}\in\mathcal{S}(\mathbb{R})$,
then the Fourier transform $\mathfrak{F}\left(\Omega_{t}\right)$
is equal, for all $\zeta\in\mathbb{R}$, to \[
\mathfrak{F}\left(\Omega_{t}\right)(\zeta)=\frac{1}{\mathfrak{F}\left(\chi^{2}\right)(0)\left|\ln h\right|^{1-\gamma^{\prime}}}e^{it2\pi n_{0}\frac{1}{T_{hyp}}}\mathfrak{F}\left(\chi^{2}\left(\frac{x-n_{0}}{\left|\ln h\right|^{1-\gamma^{\prime}}}\right)e^{-it2\pi x\frac{1}{T_{hyp}}}\right)(\zeta).\]
\[
=\frac{1}{\mathfrak{F}\left(\chi^{2}\right)(0)}e^{-2i\pi n_{0}\zeta}\mathfrak{F}\left(\chi^{2}\right)\left(-\left|\ln h\right|^{1-\gamma^{\prime}}\left(\zeta+\frac{t}{T_{hyp}}\right)\right).\]
With the Poisson formula we get\[
\sum_{n\in\mathbb{Z}}\Omega_{t}(n)=\sum_{\ell\in\mathbb{Z}}\mathfrak{F}\left(\Omega_{t}\right)(\ell)\]
\textit{\[
=\frac{1}{\mathfrak{F}\left(\chi^{2}\right)(0)}\sum_{\ell\in\mathbb{Z}}\mathfrak{F}\left(\chi^{2}\right)\left(-\left|\ln h\right|^{1-\gamma^{\prime}}\left(\ell+\frac{t}{T_{hyp}}\right)\right).\]
}
\end{proof}
So, now, our goal is to study the behaviour of the serie :\[
t\mapsto\frac{1}{\mathfrak{F}\left(\chi^{2}\right)(0)}\sum_{\ell\in\mathbb{Z}}\mathfrak{F}\left(\chi^{2}\right)\left(-\left|\ln h\right|^{1-\gamma^{\prime}}\left(\ell+\frac{t}{T_{hyp}}\right)\right).\]
Since the function $\mathfrak{F}\left(\chi^{2}\right)\in\mathcal{S}(\mathbb{R})$,
it's clear that the only index $\ell\in\mathbb{Z}$ such that $\left(\ell+\frac{t}{T_{hyp}}\right)$
is close to zero are important. More precisely : 
\begin{defn}
For all $t\in\mathbb{R}$, let us define $\ell(t)=\ell(t,h,E)$ the
closest integer to the real number $-t/T_{hyp}$; e.g :\[
\ell(t)+\frac{t}{T_{hyp}}=d\left(t,T_{hyp}\mathbb{Z}\right);\]
where $d(.,.)$ denote the Euclidiean distance on $\mathbb{R}$.\end{defn}
\begin{rem}
Without loss of generality, we may suppose $\ell(t)$ is unique. On
the other hand, for all integer $\ell\in\mathbb{Z}$ such that $\ell\neq\ell(t)$
we get :\[
\left|\ell+\frac{t}{T_{hyp}}\right|\geq\frac{1}{2}.\]
\end{rem}
\begin{lem}
For a function $\varphi\in\mathcal{S}(\mathbb{R})$ and $\varepsilon\in\left]0,1\right]$
then, uniformly for $u\in\mathbb{R}$ we have : \[
\sum_{\ell\in\mathbb{\mathbb{Z}},\,\left|\ell+u\right|\geq\frac{1}{2}}\left|\varphi\left(\frac{\ell+u}{\varepsilon}\right)\right|=O(\varepsilon^{\infty}).\]
\end{lem}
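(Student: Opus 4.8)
The plan is to reduce the uniform-in-$u$ estimate to the non-uniform one already proved in Lemma 4.11 by isolating the term of the sum in which $\ell+u$ comes closest to zero. First I would observe that, for fixed $u\in\mathbb{R}$, there is a (essentially unique) integer $\ell_0=\ell_0(u)$ realising $\min_{\ell\in\mathbb{Z}}|\ell+u|=d(u,\mathbb{Z})\le\tfrac12$; every other index satisfies $|\ell+u|\ge\tfrac12$, and more precisely, writing $\ell=\ell_0+k$ with $k\in\mathbb{Z}\setminus\{0\}$, one has $|\ell+u|=|k+(\ell_0+u)|\ge|k|-\tfrac12\ge\tfrac{|k|}{2}$. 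Hence the hypothesis $|\ell+u|\ge\tfrac12$ forces $\ell=\ell_0+k$ with $k\neq 0$, and $\left|\frac{\ell+u}{\varepsilon}\right|\ge\frac{|k|}{2\varepsilon}$.

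Then I would bound $\left|\varphi\left(\frac{\ell+u}{\varepsilon}\right)\right|$ using rapid decay of $\varphi\in\mathcal S(\mathbb{R})$: for every $N\ge 1$ there is $C_N$ with $|\varphi(y)|\le C_N(1+y^2)^{-N}$, so
\[
\sum_{\substack{\ell\in\mathbb{Z}\\ |\ell+u|\ge 1/2}}\left|\varphi\left(\frac{\ell+u}{\varepsilon}\right)\right|
\le \sum_{k\in\mathbb{Z}\setminus\{0\}} C_N\left(1+\frac{k^2}{4\varepsilon^2}\right)^{-N}
\le C_N\,(4\varepsilon^2)^N\sum_{k\in\mathbb{Z}\setminus\{0\}}\frac{1}{|k|^{2N}}.
\]
For $N\ge 1$ the last series converges (to a finite constant independent of $u$ and $\varepsilon$), so the whole sum is $O(\varepsilon^{2N})$; since $N$ is arbitrary this is $O(\varepsilon^\infty)$, and the bound is manifestly uniform in $u$ because $\ell_0(u)$ dropped out after the change of index $k=\ell-\ell_0(u)$. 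This is exactly the trick used in the proof of Lemma 4.11 (reduce to the tail of a convergent $p$-series after pulling out a power of $\varepsilon$), now applied after recentring the index.

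The only point requiring a little care — and the place I'd expect the mildest friction — is the recentring step: one must check that after substituting $\ell=\ell_0(u)+k$ the constraint ``$|\ell+u|\ge 1/2$'' becomes ``$k\neq 0$'' and that the resulting lower bound $|k+(\ell_0(u)+u)|\ge |k|/2$ holds for all such $k$, using $|\ell_0(u)+u|\le 1/2$. There is a harmless edge case when $d(u,\mathbb{Z})=\tfrac12$ exactly (two minimisers); one simply picks either one, or equivalently notes that in that case every $\ell$ satisfies $|\ell+u|\ge 1/2$ and the same $|k|/2$-type bound still applies after choosing any nearest integer. Once this is settled the estimate is immediate and uniform, completing the lemma.
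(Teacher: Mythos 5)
Your proof is correct and follows essentially the same route as the paper's: both rest on the Schwartz bound $\left|\varphi(y)\right|\leq C_{N}\left(1+y^{2}\right)^{-N}$, the extraction of a factor $\varepsilon^{2N}$ from the condition that $\left|\ell+u\right|$ is bounded below, and comparison with the convergent series $\sum_{k\neq0}k^{-2N}$. Your recentring $\ell=\ell_{0}(u)+k$ makes the uniformity in $u$ somewhat more transparent than the paper's reduction to $u\in\left[-1,1\right]$ (whose intermediate bound $M/\left|\ell^{2}-2\left|\ell\right|\right|$ degenerates for the finitely many indices $\ell=0,\pm2$, a harmless sloppiness), but the substance of the two arguments is identical.
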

\begin{proof}
We see easily that, uniformly for $u\in\mathbb{R}$ we have \[
\sum_{\ell\in\mathbb{\mathbb{Z}},\,\left|\ell+u\right|\geq\frac{1}{2}}\left|\varphi\left(\frac{\ell+u}{\varepsilon}\right)\right|=O(1).\]
Indeed, since $\varepsilon\in\left]0,1\right]$, without loss of generality,
we may suppose $u\in\left[-1,1\right]$. And since for all $u\in\left[-1,1\right]$
we have\[
\left|\varphi\left(\frac{\ell+u}{\varepsilon}\right)\right|\leq\frac{M}{1+\frac{(\ell+u)^{2}}{\varepsilon^{2}}}=\frac{\varepsilon^{2}M}{\varepsilon^{2}+(\ell+u)^{2}}\]
\[
\leq\frac{M}{(\ell+u)^{2}}=\frac{M}{\left|\ell^{2}-2\left|\ell\right|\right|};\]
and we conclude.

Next,\[
\sum_{\ell\in\mathbb{\mathbb{Z}},\,\left|\ell+u\right|\geq\frac{1}{2}}\left|\varphi\left(\frac{\ell+u}{\varepsilon}\right)\right|=\sum_{\ell\in\mathbb{\mathbb{Z}},\,\left|\ell+u\right|\geq\frac{1}{2}}\left(\frac{\ell+u}{\varepsilon}\right)^{2N}\left|\varphi\left(\frac{\ell+u}{\varepsilon}\right)\right|\frac{\varepsilon^{2N}}{(\ell+u)^{2N}}\]
\[
\leq\varepsilon^{2N}4^{N}\sum_{\ell\in\mathbb{\mathbb{Z}}}\left(\frac{\ell+u}{\varepsilon}\right)^{2N}\left|\varphi\left(\frac{\ell+u}{\varepsilon}\right)\right|.\]
To conclude the proof, we apply that to the function $\psi(x):=x^{2N}\varphi(x)$.\end{proof}
\begin{thm}
Uniformly for $t\geq0$ we have :\[
\sum_{n\in\mathbb{N}}\left|a_{n}\right|^{2}e^{\frac{-it2\pi(n-n_{0})}{T_{hyp}}}=\frac{1}{\mathfrak{F}\left(\chi^{2}\right)(0)}\mathfrak{F}\left(\chi^{2}\right)\left(-\left|\ln h\right|^{1-\gamma^{\prime}}d\left(t,T_{hyp}\mathbb{Z}\right)\right)+O\left|\frac{1}{\ln(h)^{\infty}}\right|.\]
\end{thm}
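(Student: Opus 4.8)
The plan is to feed the exact Poisson-type identity of Proposition~5.7 into the rapid-decay estimate of Lemma~5.10, after first replacing the sum over $\mathbb{N}$ by the sum over $\mathbb{Z}$. (Note the left-hand side is exactly $\widetilde{\mathbf{a_{1}}}(t)$ of Definition~5.3, since $1/T_{hyp}=\mathcal{A}_{h}^{\prime}(2\pi n_{0})$.) First I would observe that
\[
\left|\sum_{n\in\mathbb{N}}\left|a_{n}\right|^{2}e^{\frac{-it2\pi(n-n_{0})}{T_{hyp}}}-\sum_{n\in\mathbb{Z}}\left|a_{n}\right|^{2}e^{\frac{-it2\pi(n-n_{0})}{T_{hyp}}}\right|\leq\sum_{n\leq-1}\left|a_{n}\right|^{2}=O\left|\frac{1}{\ln(h)^{\infty}}\right|
\]
uniformly in $t$, the last equality being precisely the bound on the negative tail of $\left(a_{n}\right)_{n}$ established inside the proof of Theorem~4.12 (it rests on $n_{0}\sim N/h\to+\infty$ and on $\chi\in\mathcal{S}(\mathbb{R})$). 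Hence it suffices to prove the formula with $\sum_{n\in\mathbb{Z}}$ in place of $\sum_{n\in\mathbb{N}}$.

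Now I would apply Proposition~5.7 verbatim, so the problem becomes the analysis of $\frac{1}{\mathfrak{F}(\chi^{2})(0)}\sum_{\ell\in\mathbb{Z}}\mathfrak{F}(\chi^{2})\!\left(-\left|\ln h\right|^{1-\gamma^{\prime}}\left(\ell+t/T_{hyp}\right)\right)$, where $\mathfrak{F}(\chi^{2})(0)\neq 0$ since $\chi$ is non-null (already used in Theorem~4.12). I split the $\ell$-sum into the single index $\ell=\ell(t)$ and the rest. By Definition~5.8 one has $\ell(t)+t/T_{hyp}=d(t,T_{hyp}\mathbb{Z})$, so the term $\ell=\ell(t)$ is exactly $\frac{1}{\mathfrak{F}(\chi^{2})(0)}\mathfrak{F}(\chi^{2})\!\left(-\left|\ln h\right|^{1-\gamma^{\prime}}d(t,T_{hyp}\mathbb{Z})\right)$, the announced main term. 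For the remaining terms I would use that $\chi\in\mathcal{S}(\mathbb{R})$ forces $\chi^{2}\in\mathcal{S}(\mathbb{R})$, hence $\mathfrak{F}(\chi^{2})\in\mathcal{S}(\mathbb{R})$, together with the remark following Definition~5.8, which gives $\left|\ell+t/T_{hyp}\right|\geq\frac{1}{2}$ for every $\ell\neq\ell(t)$. Putting $\varepsilon:=\left|\ln h\right|^{-(1-\gamma^{\prime})}\in\,]0,1]$ (legitimate for $h$ small, as $0\leq\gamma^{\prime}<1$) and $u:=t/T_{hyp}$, Lemma~5.10 applied with $\varphi(x):=\mathfrak{F}(\chi^{2})(-x)\in\mathcal{S}(\mathbb{R})$ yields
\[
\sum_{\ell\neq\ell(t)}\left|\mathfrak{F}(\chi^{2})\!\left(-\left|\ln h\right|^{1-\gamma^{\prime}}\left(\ell+\frac{t}{T_{hyp}}\right)\right)\right|=\sum_{\left|\ell+u\right|\geq\frac{1}{2}}\left|\varphi\!\left(\frac{\ell+u}{\varepsilon}\right)\right|=O(\varepsilon^{\infty})=O\left|\frac{1}{\ln(h)^{\infty}}\right|
\]
uniformly in $u\in\mathbb{R}$, hence uniformly in $t\geq0$. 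Collecting the $\mathbb{N}$-versus-$\mathbb{Z}$ correction, the main term, and this tail concludes the argument.

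There is no genuine obstacle here: the statement is an assembly of Proposition~5.7, Lemma~5.10 and the negative-tail bound from the proof of Theorem~4.12. The only point deserving a line of care is the uniformity in $t\geq0$, but it is handed to us directly by the uniformity in $u\in\mathbb{R}$ of Lemma~5.10, the $\mathbb{N}$-to-$\mathbb{Z}$ correction being $t$-independent.
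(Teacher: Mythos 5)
Your proof is correct and follows essentially the same route as the paper's: it combines Proposition~5.7, the splitting of the $\ell$-sum at $\ell=\ell(t)$ via Lemma~5.10, and the $O\left|\frac{1}{\ln(h)^{\infty}}\right|$ bound on the negative tail $\sum_{n\leq-1}\left|a_{n}\right|^{2}$ exactly as the paper does (the paper merely performs the $\mathbb{N}$-to-$\mathbb{Z}$ correction after the Poisson step rather than before, and gathers the error terms in a single triangle inequality). Your explicit identification $\varepsilon=\left|\ln h\right|^{-(1-\gamma^{\prime})}$, $u=t/T_{hyp}$, $\varphi(x)=\mathfrak{F}\left(\chi^{2}\right)(-x)$ and the remark on uniformity in $t$ are exactly the points the paper leaves implicit.
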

\begin{proof}
Since the proposition 5.7 and with the lemma above we get for all
\foreignlanguage{english}{$t\geq0$}\textit{\[
\sum_{n\in\mathbb{\mathbb{Z}}}\left|a_{n}\right|^{2}e^{-it2\pi(n-n_{0})\frac{t}{T_{hyp}}}=\frac{1}{\mathfrak{F}\left(\chi^{2}\right)(0)}\sum_{\ell\in\mathbb{Z}}\mathfrak{F}\left(\chi^{2}\right)\left(-\left|\ln h\right|^{1-\gamma^{\prime}}\left(\ell+\frac{t}{T_{hyp}}\right)\right)\]
\[
=\frac{1}{\mathfrak{F}\left(\chi^{2}\right)(0)}\mathfrak{F}\left(\chi^{2}\right)\left(-\left|\ln h\right|^{1-\gamma^{\prime}}d\left(t,T_{hyp}\mathbb{Z}\right)\right)+O\left|\frac{1}{\ln(h)^{\infty}}\right|.\]
}Next, for all $t\geq0$ \textit{\[
\sum_{n\in\mathbb{N}}\left|a_{n}\right|^{2}e^{-it2\pi(n-n_{0})\frac{1}{T_{hyp}}}\]
\[
=\sum_{n\in\mathbb{\mathbb{Z}}}\left|a_{n}\right|^{2}e^{-it2\pi(n-n_{0})\frac{1}{T_{hyp}}}-\sum_{n=-\infty}^{-1}\left|a_{n}\right|^{2}e^{-it2\pi(n-n_{0})\frac{1}{T_{hyp}}};\]
}

\textit{\[
=\frac{1}{\mathfrak{F}\left(\chi^{2}\right)(0)}\sum_{\ell\in\mathbb{Z}}\mathfrak{F}\left(\chi^{2}\right)\left(-\left|\ln h\right|^{1-\gamma^{\prime}}\left(\ell+\frac{t}{T_{hyp}}\right)\right)\]
}\[
-\sum_{n=-\infty}^{-1}\left|a_{n}\right|^{2}e^{-it2\pi(n-n_{0})\frac{1}{T_{hyp}}}.\]
Hence, by triangular inequality, we have for all $t\geq0$ \textit{\[
\left|\sum_{n\in\mathbb{N}}\left|a_{n}\right|^{2}e^{-it2\pi(n-n_{0})\frac{1}{T_{hyp}}}-\frac{1}{\mathfrak{F}\left(\chi^{2}\right)(0)}\mathfrak{F}\left(\chi^{2}\right)\left(-\left|\ln h\right|^{1-\gamma^{\prime}}d\left(t,T_{hyp}\mathbb{Z}\right)\right)\right|\]
\[
\leq\sum_{\ell\neq\ell(t)}\left|\frac{1}{\mathfrak{F}\left(\chi^{2}\right)(0)}\mathfrak{F}\left(\chi^{2}\right)\left(-\left|\ln h\right|^{1-\gamma^{\prime}}\left(\ell+\frac{t}{T_{hyp}}\right)\right)\right|+\sum_{n=1}^{+\infty}\left|a_{-n}\right|^{2}.\]
}Since, for all $t\geq0$, we have \[
\frac{1}{\mathfrak{F}\left(\chi^{2}\right)(0)}\sum_{\ell\neq\ell(t)}\mathfrak{F}\left(\chi^{2}\right)\left(-\left|\ln h\right|^{1-\gamma^{\prime}}\left(\ell+\frac{t}{T_{hyp}}\right)\right)=O\left|\frac{1}{\ln(h)^{\infty}}\right|;\]
and\textit{\[
\sum_{n=1}^{+\infty}\left|a_{-n}\right|^{2}=O\left|\frac{1}{\ln(h)^{\infty}}\right|.\]
}So we prove the theorem.
\end{proof}
Now, we can formulate the main result of the section 5 :
\begin{cor}
We have :

\textbf{(i)} for $t$ such that $t\in T_{hyp}\mathbb{Z}$ we get\[
\sum_{n\in\mathbb{N}}\left|a_{n}\right|^{2}e^{-it\frac{2\pi(n-n_{0})}{T_{hyp}}}=1.\]
\textbf{(ii)} For all $\varepsilon>0$ such that $\varepsilon<1-\gamma^{'}$,
and for $t$ such that $\left|d\left(t,T_{hyp}\mathbb{Z}\right)\right|>\left|\ln h\right|^{\gamma^{\prime}-1+\varepsilon}$;
we get \[
\sum_{n\in\mathbb{N}}\left|a_{n}\right|^{2}e^{-it\frac{2\pi(n-n_{0})}{T_{hyp}}}=O\left|\frac{1}{\ln(h)^{\infty}}\right|.\]
\end{cor}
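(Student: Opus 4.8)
The plan is to derive both statements directly from Theorem 5.11, which already carries all the analytic content (the Poisson summation of Proposition 5.7 together with the Schwartz-decay estimate of Lemma 5.10).

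For part \textbf{(i)} I would argue by direct substitution. If $t\in T_{hyp}\mathbb{Z}$, say $t=kT_{hyp}$ with $k\in\mathbb{Z}$, then $e^{-it\frac{2\pi(n-n_{0})}{T_{hyp}}}=e^{-2\pi ik(n-n_{0})}=1$ for every $n$, so the series collapses to $\sum_{n\in\mathbb{N}}|a_{n}|^{2}=\|a_{n}\|_{\ell^{2}(\mathbb{N})}^{2}$, which is $1$ by the normalisation of $(a_{n})_{n}$ in Definition 4.12 (equivalently, $1+O\left|\frac{1}{\ln(h)^{\infty}}\right|$ by Theorem 4.13). One may also simply note that $d(t,T_{hyp}\mathbb{Z})=0$ here and invoke Theorem 5.11, since $\frac{1}{\mathfrak{F}(\chi^{2})(0)}\mathfrak{F}(\chi^{2})(0)=1$.

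For part \textbf{(ii)} I would set $D:=d(t,T_{hyp}\mathbb{Z})\ge0$ and exploit the hypothesis. Multiplying $D>|\ln h|^{\gamma^{\prime}-1+\varepsilon}$ by $|\ln h|^{1-\gamma^{\prime}}>0$ gives
\[
|\ln h|^{1-\gamma^{\prime}}D>|\ln h|^{\varepsilon},
\]
and since $\varepsilon>0$ is fixed, $|\ln h|^{\varepsilon}\to+\infty$ as $h\to0$. Because $\chi\in\mathcal{S}(\mathbb{R})$ we have $\chi^{2}\in\mathcal{S}(\mathbb{R})$ and hence $\mathfrak{F}(\chi^{2})\in\mathcal{S}(\mathbb{R})$; its rapid decay yields, for every integer $N\ge1$, a constant $C_{N}$ with
\[
\left|\mathfrak{F}(\chi^{2})\left(-|\ln h|^{1-\gamma^{\prime}}D\right)\right|\le\frac{C_{N}}{\left(|\ln h|^{1-\gamma^{\prime}}D\right)^{N}}\le\frac{C_{N}}{|\ln h|^{N\varepsilon}}.
\]
As $N$ is arbitrary and $\varepsilon>0$ is fixed, the right-hand side is $O\left|\frac{1}{\ln(h)^{\infty}}\right|$; plugging this into the identity of Theorem 5.11 and absorbing its remainder term finishes the proof.

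I do not expect a genuine obstacle here: the substance is entirely in Theorem 5.11, and the corollary is a one-line consequence of the Schwartz decay of $\mathfrak{F}(\chi^{2})$. The only point deserving a line of care is uniformity in $t$, but this is automatic, since $t$ enters the conclusion of Theorem 5.11 only through $D=d(t,T_{hyp}\mathbb{Z})$ and that theorem is already stated uniformly for $t\ge0$. As for the hypotheses on $\varepsilon$: the condition $\varepsilon>0$ is precisely what drives the argument $|\ln h|^{1-\gamma^{\prime}}D$ of $\mathfrak{F}(\chi^{2})$ to infinity, while the upper bound $\varepsilon<1-\gamma^{\prime}$ is not needed for the estimate itself but guarantees that the excluded set $\{t:\ d(t,T_{hyp}\mathbb{Z})\le|\ln h|^{\gamma^{\prime}-1+\varepsilon}\}$ is a neighbourhood of $T_{hyp}\mathbb{Z}$ whose width tends to $0$, so that \textbf{(ii)} genuinely captures the behaviour at asymptotically all $t$ off the revival times.
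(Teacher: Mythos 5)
Your proposal is correct and follows essentially the same route as the paper: part (i) is immediate (the paper simply says it is clear), and part (ii) is exactly the paper's argument, namely feeding the hypothesis $\left|\ln h\right|^{1-\gamma^{\prime}}d\left(t,T_{hyp}\mathbb{Z}\right)>\left|\ln h\right|^{\varepsilon}$ into the Schwartz decay of $\mathfrak{F}\left(\chi^{2}\right)$ and invoking Theorem 5.11. Your remarks on uniformity and on the role of the bound $\varepsilon<1-\gamma^{\prime}$ are accurate but not part of the paper's (very terse) proof.
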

\begin{proof}
The first point is clear. For the second : if\textit{ \[
\left|-\left|\ln h\right|^{1-\gamma^{\prime}}\left(\ell(t)+t\mathcal{A}_{h}^{\prime}(2\pi n_{0})\right)\right|>\left|\ln h\right|^{\varepsilon}\]
} thus, we have :\[
\left|\mathfrak{F}\left(\chi^{2}\right)\left(-\left|\ln h\right|^{1-\gamma^{\prime}}\left(\ell(t)+t\mathcal{A}_{h}^{\prime}(2\pi n_{0})\right)\right)\right|\leq\frac{B_{k}}{\left(1+\left|\ln h\right|^{\varepsilon}\right)^{k}}\]
\[
\leq B_{k}\left|\ln h\right|^{-\varepsilon k}.\]
for all integer $k\geq1;$ so we prove the second point.
\end{proof}

\section{Second order approximation : revival period}

\subsection{Introduction}

In this this section, we use a second order approximation of the function
$t\mapsto\mathbf{a}(t)$, indeed by a Taylor's formula we have for
all $t\geq0$\[
\mathbf{a}(t)={\displaystyle \sum_{n\in\mathbb{N}}\left|a_{n}\right|^{2}e^{-it\left(\mathcal{A}_{h}(2\pi n_{0})+\mathcal{A}_{h}^{\prime}(2\pi n_{0})2\pi(n-n_{0})+\mathcal{A}_{h}^{\prime\prime}(2\pi n_{0})2\pi^{2}(n-n_{0})^{2}+\mathcal{A}_{h}^{(3)}(\zeta)\frac{(2\pi)^{3}}{6}(n-n_{0})^{3}\right)}}\]
where $\zeta=\zeta(n,h,E)$.

We need the :
\begin{lem}
Uniformly, on the compact set $[-1,1]$ we have\[
\lambda\mapsto\left(\mathcal{A}_{h}^{(3)}\circ\mathcal{Y}_{h}\right)(\lambda)=O\left|\frac{1}{\ln(h)^{4}}\right|.\]
\end{lem}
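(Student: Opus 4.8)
The plan is to mirror the proof of Lemma 5.1 one differentiation order higher, using the inverse-function formula for a third derivative. Writing $\mathcal{A}_h=\mathcal{Y}_h^{-1}$ and differentiating the identity $\mathcal{A}_h'\circ\mathcal{Y}_h=1/\mathcal{Y}_h'$ twice more, one gets, for every $\lambda\in[-1,1]$,
\[
\bigl(\mathcal{A}_h^{(3)}\circ\mathcal{Y}_h\bigr)(\lambda)=\frac{3\bigl(\mathcal{Y}_h''(\lambda)\bigr)^2-\mathcal{Y}_h'(\lambda)\,\mathcal{Y}_h^{(3)}(\lambda)}{\bigl(\mathcal{Y}_h'(\lambda)\bigr)^5}.
\]
By Proposition 3.2, $\mathcal{Y}_h'(\lambda)=\ln(h)/\sqrt{-V''(0)}+O(1)$ uniformly on $[-1,1]$, so $\bigl(\mathcal{Y}_h'(\lambda)\bigr)^{-5}=O\bigl(|\ln h|^{-5}\bigr)$, and the proof of Lemma 5.1 already provides $\mathcal{Y}_h''(\lambda)=O(1)$, hence $3(\mathcal{Y}_h'')^2=O(1)$. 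Thus everything reduces to the claim $\mathcal{Y}_h^{(3)}(\lambda)=O(1)$ uniformly on $[-1,1]$: once that is known, the numerator above is $O(1)+O(1)\cdot O(|\ln h|)=O(|\ln h|)$, and dividing by $\bigl(\mathcal{Y}_h'\bigr)^5$ yields the claimed $O\bigl(|\ln h|^{-4}\bigr)$.

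To prove $\mathcal{Y}_h^{(3)}=O(1)$, I would differentiate once more, term by term, the expression for $\mathcal{Y}_h''$ displayed in the proof of Lemma 5.1. The first two pieces there, $-h^2(\theta_+''+\theta_-'')(\lambda h)/2$ and $h\varepsilon''(\lambda h)\ln(h)$, differentiate to $O(h^2)$ and $O(h^2\ln h)$: every $\lambda$-derivative of a symbol evaluated at $\lambda h$ brings a factor $h$ by the chain rule, while the $E$-derivatives of $\theta_\pm$ are $O(1/h)$ and those of $\varepsilon$ are $O(1)$ — this compensation is the crux of the argument. For the $\arg\Gamma$ piece, I would differentiate once more the formula $\partial_\lambda^2\arg\Gamma(\tfrac12+i\tfrac{\varepsilon(\lambda h)}{h})=h\varepsilon''(\lambda h)\,\mathrm{Re}\,\Psi(\cdots)-(\varepsilon'(\lambda h))^2\,\mathrm{Im}\,\Psi^{(1)}(\cdots)$ obtained there; this produces a polynomial in $\varepsilon',\varepsilon'',\varepsilon^{(3)}$ (all $O(1)$) and in $\Psi,\Psi^{(1)},\Psi^{(2)}$ evaluated at $\tfrac12+i\tfrac{\varepsilon(\lambda h)}{h}$, and since $\varepsilon(\lambda h)/h=O(1)$ on $[-1,1]$ (as already used in Lemma 5.1) these polygamma values are $O(1)$, so this contribution is $O(1)$. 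Finally, differentiating the $\arccos$ piece $\partial_\lambda^2\arccos(\cdots)=N_h(\lambda)/D_h(\lambda)$ gives $(N_h'D_h-N_hD_h')/D_h^2$; here $N_h$, and hence $N_h'$ and $D_h'$, are polynomials with $h$-independent coefficients in $\cos g_h,\sin g_h,g_h',g_h'',g_h^{(3)},\partial_\lambda^k(\varepsilon(\lambda h)/h)$ and $\exp(2\pi\varepsilon(\lambda h)/h)$, each of which is $O(1)$ on $[-1,1]$ (in particular $g_h'=\tfrac{h}{2}(\theta_+'-\theta_-')(\lambda h)=O(1)$, $g_h''=O(h)$, $g_h^{(3)}=O(h^2)$, and $\exp(2\pi\varepsilon(\lambda h)/h)$ is bounded above and below), while $D_h(\lambda)=(1+\exp(2\pi\varepsilon(\lambda h)/h))^2(1+\exp(2\pi\varepsilon(\lambda h)/h)-\cos^2 g_h(\lambda))^{3/2}$ stays bounded below by a positive constant, exactly as in Lemma 5.1. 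Summing the four contributions gives $\mathcal{Y}_h^{(3)}(\lambda)=O(1)$ uniformly.

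Substituting these estimates into the inverse-function formula then finishes the proof. The hard part is not conceptual but bookkeeping: one must keep track that every differentiation in $\lambda$ of a symbol ($\theta_\pm$, $\varepsilon$, $g_h$) produces a compensating factor $h$, so that the a priori large quantities $\theta_\pm=O(1/h)$ and $g_h=O(1/h)$ never actually survive in $\mathcal{Y}_h^{(3)}$, and that all digamma-type factors remain evaluated on the line $\mathrm{Re}=\tfrac12$ with $O(1)$ imaginary part and are therefore uniformly bounded. Everything else is a routine, if lengthy, differentiation entirely parallel to Lemma 5.1.
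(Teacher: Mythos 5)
Your proposal is correct and follows essentially the same route as the paper: the same inverse-function identity $\left(\mathcal{A}_h^{(3)}\circ\mathcal{Y}_h\right)=\bigl(3(\mathcal{Y}_h'')^2-\mathcal{Y}_h'\,\mathcal{Y}_h^{(3)}\bigr)/(\mathcal{Y}_h')^5$ (the paper just writes it as two separate fractions), the same reduction to $\mathcal{Y}_h^{(3)}=O(1)$ via term-by-term differentiation of the four pieces of $\mathcal{Y}_h''$, and the same bookkeeping that each $\lambda$-derivative of a symbol evaluated at $\lambda h$ contributes a factor $h$ compensating the $O(1/h)$ size of $\theta_\pm$ and $g_h$. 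The only cosmetic difference is that you assemble the two terms into a single $O(|\ln h|)/(\ln h)^5$ estimate rather than bounding them separately as $O(|\ln h|^{-4})$ and $O(|\ln h|^{-5})$.
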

\begin{proof}
With the derivatives formulas, we have for all $x\in\mathcal{Y}_{h}\left([-1,1]\right)$\[
\mathcal{A}_{h}^{(3)}(x)=\frac{-\left(\mathcal{Y}_{h}^{(3)}\circ\mathcal{A}_{h}\right)(x)}{\left(\mathcal{Y}_{h}^{\prime}\circ\mathcal{A}_{h}\right)^{4}(x)}+\frac{3\left(\mathcal{Y}_{h}^{\prime\prime}\circ\mathcal{A}_{h}\right)^{2}(x)}{\left(\mathcal{Y}_{h}^{\prime}\circ\mathcal{A}_{h}\right)^{5}(x)};\]
hence, for all $\lambda\in[-1,1]$ we get \[
\left(\mathcal{A}_{h}^{(3)}\circ\mathcal{Y}_{h}\right)(\lambda)=\frac{-\mathcal{Y}_{h}^{(3)}(\lambda)}{\left(\mathcal{Y}_{h}^{\prime}\right)^{4}(\lambda)}+\frac{3\left(\mathcal{Y}_{h}^{\prime\prime}\right)^{2}(\lambda)}{\left(\mathcal{Y}_{h}^{\prime}\right)^{5}(\lambda)}.\]
First, let us estimate the function $\lambda\mapsto\mathcal{Y}_{h}^{(3)}(\lambda)$.
For all $\lambda\in\left[-1,1\right]$ we have\[
\mathcal{Y}_{h}^{(3)}(\lambda)=-h^{3}\frac{\theta_{+}^{(3)}(\lambda h)+\theta_{-}^{(3)}(\lambda h)}{2}+h^{2}\varepsilon^{(3)}(\lambda h)\ln(h)\]
\[
+\frac{\partial^{3}}{\partial\lambda^{3}}\left[\arg\left(\Gamma\left(\frac{1}{2}+i\frac{\varepsilon(\lambda h)}{h}\right)\right)\right]-\frac{\partial^{3}}{\partial\lambda^{3}}\left[\arccos\left(\frac{\cos\left(g_{h}(\lambda)\right)}{\sqrt{1+\exp\left(2\pi\varepsilon(\lambda h)/h\right)}}\right)\right].\]
Uniformly, on the compact set $\left[-1,1\right]$ we have \[
\lambda\mapsto-h^{3}\frac{\theta_{+}^{(3)}(\lambda h)+\theta_{-}^{(3)}(\lambda h)}{2}=O(h^{2})\]
and \[
\lambda\mapsto h^{2}\varepsilon^{(3)}(\lambda h)\ln(h)=O(h^{2}\ln(h)).\]
Next, let us estimate the function $\lambda\mapsto\frac{\partial^{3}}{\partial\lambda^{3}}\left[\arg\left(\Gamma\left(\frac{1}{2}+i\frac{\varepsilon(\lambda h)}{h}\right)\right)\right]$
: for all $\lambda\in[-1,1]$ \[
\frac{\partial^{3}}{\partial\lambda^{3}}\left[\arg\left(\Gamma\left(\frac{1}{2}+i\frac{\varepsilon(\lambda h)}{h}\right)\right)\right]\]
\[
=\frac{\partial}{\partial\lambda}\left[h\varepsilon^{\prime\prime}(\lambda h)\textrm{Re}\left(\Psi\left(\frac{1}{2}+i\frac{\varepsilon(\lambda h)}{h}\right)\right)-\left(\varepsilon^{\prime}(\lambda h)\right)^{2}\textrm{Im}\left(\Psi^{(1)}\left(\frac{1}{2}+i\frac{\varepsilon(\lambda h)}{h}\right)\right)\right]\]
\[
=h^{2}\varepsilon^{(3)}(\lambda h)\textrm{Re}\left(\Psi\left(\frac{1}{2}+i\frac{\varepsilon(\lambda h)}{h}\right)\right)+h\varepsilon^{\prime\prime}(\lambda h)\frac{\partial}{\partial\lambda}\left[\textrm{Re}\left(\Psi\left(\frac{1}{2}+i\frac{\varepsilon(\lambda h)}{h}\right)\right)\right]\]
\[
-2h\varepsilon^{\prime}(\lambda h)\varepsilon^{\prime\prime}(\lambda h)\textrm{Im}\left(\Psi^{(1)}\left(\frac{1}{2}+i\frac{\varepsilon(\lambda h)}{h}\right)\right)-\left(\varepsilon^{\prime}(\lambda h)\right)^{2}\frac{\partial}{\partial\lambda}\left[\textrm{Im}\left(\Psi^{(1)}\left(\frac{1}{2}+i\frac{\varepsilon(\lambda h)}{h}\right)\right)\right]\]
\[
=h^{2}\varepsilon^{(3)}(\lambda h)\textrm{Re}\left(\Psi\left(\frac{1}{2}+i\frac{\varepsilon(\lambda h)}{h}\right)\right)-3h\varepsilon^{\prime\prime}(\lambda h)\varepsilon^{\prime}(\lambda h)\textrm{Im}\left(\Psi^{(1)}\left(\frac{1}{2}+i\frac{\varepsilon(\lambda h)}{h}\right)\right)\]
\[
+\left(\varepsilon^{\prime}(\lambda h)\right)^{3}\textrm{Re}\left(\Psi^{(2)}\left(\frac{1}{2}+i\frac{\varepsilon(\lambda h)}{h}\right)\right);\]
thus, the function $\lambda\mapsto\frac{\partial^{3}}{\partial\lambda^{3}}\left[\arg\left(\Gamma\left(\frac{1}{2}+i\frac{\varepsilon(\lambda h)}{h}\right)\right)\right]$
is equal to a $O(1)$ on the compact set \foreignlanguage{english}{{[}-1,1{]}.}
Now, for finish, let us estimate the function\[
\lambda\mapsto\frac{\partial^{3}}{\partial\lambda^{3}}\left[\arccos\left(\frac{\cos\left(g_{h}(\lambda)\right)}{\sqrt{1+\exp\left(2\pi\varepsilon(\lambda h)/h\right)}}\right)\right];\]
with the notations introduced in the proof of lemma 5.1, for all $\lambda\in\left[-1,1\right]$
we have \[
\frac{\partial^{3}}{\partial\lambda^{3}}\left[\arccos\left(\frac{\cos\left(g_{h}(\lambda)\right)}{\sqrt{1+\exp\left(2\pi\varepsilon(\lambda h)/h\right)}}\right)\right]=\frac{\partial}{\partial\lambda}\left(\frac{N_{h}}{D_{h}}\right)\]
\[
=\frac{N_{h}^{\prime}(\lambda)D_{h}(\lambda)-N_{h}(\lambda)D_{h}^{\prime}(\lambda)}{D_{h}^{2}(\lambda)};\]
we have seen into the proof of lemma 5.1, that, on the compact set
$\left[-1,1\right]$ \[
\lambda\mapsto\frac{1}{D_{h}^{2}(\lambda)}=O(1).\]
By the same argument as into the prof of lemma 5.1, we show that $\lambda\mapsto N_{h}^{\prime}(\lambda)D_{h}(\lambda)-N_{h}(\lambda)D_{h}^{\prime}(\lambda)$
is to equal to a $O(1)$ on the compact set $\left[-1,1\right]$.
Consequently for all $\lambda\in\left[-1,1\right]$ the function \[
\lambda\mapsto\frac{\partial^{3}}{\partial\lambda^{3}}\left[\arccos\left(\frac{\cos\left(g_{h}(\lambda)\right)}{\sqrt{1+\exp\left(2\pi\varepsilon(\lambda h)/h\right)}}\right)\right]\]
is to equal to a $O(1)$ on the compact set $\left[-1,1\right]$.
Thus, on $\left[-1,1\right]$ we have $\lambda\mapsto\mathcal{Y}_{h}^{(3)}(\lambda)=O(1).$ 

Since, we have, uniformly, on the compact set $[-1,1]$ the equality
\[
\mathcal{Y}_{h}^{\prime}(\lambda)=\frac{\ln(h)}{\sqrt{-V^{''}(0)}}+O(1),\]
we deduce, that the function \[
\lambda\mapsto\frac{-\mathcal{Y}_{h}^{(3)}(\lambda)}{\left(\mathcal{Y}_{h}^{\prime}\right)^{4}(\lambda)}=O\left|\frac{1}{\ln(h)^{4}}\right|\]
on the compact set $\left[-1,1\right]$. 

In the proof of lemma 5.1 we have seen, that, on the compact set $[-1,1]$
:\[
\lambda\mapsto\mathcal{Y}_{h}^{\prime\prime}(\lambda)=O\left|\frac{1}{\ln(h)^{3}}\right|;\]
hence, on the compact set $[-1,1]$, we get 

\[
\lambda\mapsto\frac{\left(\mathcal{Y}_{h}^{\prime\prime}\right)^{2}(\lambda)}{\left(\mathcal{Y}_{h}^{\prime}\right)^{5}(\lambda)}=O\left|\frac{1}{\ln(h)^{5}}\right|;\]
thus \[
\lambda\mapsto\left(\mathcal{A}_{h}^{(3)}\circ\mathcal{Y}_{h}\right)(\lambda)=\frac{-\mathcal{Y}_{h}^{(3)}(\lambda)}{\left(\mathcal{Y}_{h}^{\prime}\right)^{4}(\lambda)}+\frac{3\left(\mathcal{Y}_{h}^{\prime\prime}\right)^{2}(\lambda)}{\left(\mathcal{Y}_{h}^{\prime}\right)^{5}(\lambda)}\]
is equal to a $O\left|\frac{1}{\ln(h)^{4}}\right|$ on the set $[-1,1]$
.\end{proof}
\begin{notation}
Let us denote by $Q_{2}(X)=Q_{2}(h,n_{0},X)$ the polynomial of $\mathbb{R}_{2}[X]$
defined by :\[
Q_{2}(X):=\mathcal{A}_{h}(2\pi n_{0})+\mathcal{A}_{h}^{\prime}(2\pi n_{0})2\pi(X-n_{0})+\mathcal{A}_{h}^{\prime\prime}(2\pi n_{0})2\pi^{2}(X-n_{0})^{2}.\]

\end{notation}

\subsection{Definition of a new time scale}
\begin{prop}
Let $\beta$ be a real number such that $\beta<4-3\gamma$. Then,
uniformly for all $t\in\left[0,\left|\ln(h)\right|^{\beta}\right]$
we have : \[
\mathbf{a}(t)={\displaystyle \sum_{n\in\mathbb{N}}\left|a_{n}\right|^{2}e^{-itQ_{2}(n)}}+O\left(\left|\ln(h)\right|^{\beta+3\gamma-4}\right).\]
\end{prop}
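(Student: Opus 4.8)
The plan is to repeat, one order higher, the argument already carried out for Proposition~5.2. First I would introduce the difference
\[
\varepsilon(t):=\left|\mathbf{a}(t)-\sum_{n\in\mathbb{N}}\left|a_{n}\right|^{2}e^{-itQ_{2}(n)}\right|
\]
and apply the Taylor--Lagrange formula to $\mathcal{A}_{h}$ at the point $2\pi n_{0}$: for each $n$ there is a real number $\zeta=\zeta(n,h,E)\in\mathcal{Y}_{h}\left([-1,1]\right)$ with $\mathcal{A}_{h}(2\pi n)=Q_{2}(n)+\mathcal{A}_{h}^{(3)}(\zeta)\frac{(2\pi)^{3}}{6}(n-n_{0})^{3}$. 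Factoring $e^{-itQ_{2}(n)}$ out of each term of $\mathbf{a}(t)$ then gives
\[
\varepsilon(t)=\left|\sum_{n\in\mathbb{N}}\left|a_{n}\right|^{2}e^{-itQ_{2}(n)}\left[e^{-it\mathcal{A}_{h}^{(3)}(\zeta)\frac{(2\pi)^{3}}{6}(n-n_{0})^{3}}-1\right]\right|,
\]
and I would split the sum along $\mathbb{N}=\Delta\sqcup\Gamma$ (recalling from Remark~4.14 that $\Delta\subset\mathbf{I}_{h}$, so that $\mathcal{A}_{h}(2\pi n)$ is defined on the relevant indices) and use the triangle inequality.

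For the contribution of $\Gamma$ I would bound the bracket crudely by $2$ and invoke Lemma~4.15, which gives $\sum_{n\in\Gamma}\left|a_{n}\right|^{2}=O\left|\frac{1}{\ln(h)^{\infty}}\right|$; so this part is $O\left|\frac{1}{\ln(h)^{\infty}}\right|$, uniformly in $t\geq0$. For the contribution of $\Delta$ I would use Lemma~6.1 (the $O\left|\frac{1}{\ln(h)^{4}}\right|$ bound on $\mathcal{A}_{h}^{(3)}\circ\mathcal{Y}_{h}$): writing $\zeta=\mathcal{Y}_{h}(\lambda)$ with $\lambda\in[-1,1]$, one has $\mathcal{A}_{h}^{(3)}(\zeta)=\left(\mathcal{A}_{h}^{(3)}\circ\mathcal{Y}_{h}\right)(\lambda)=O\left|\frac{1}{\ln(h)^{4}}\right|$, while $\left|n-n_{0}\right|\leq\left|\ln(h)\right|^{\gamma}$ on $\Delta$; hence for all $t\in\left[0,\left|\ln(h)\right|^{\beta}\right]$,
\[
\left|t\,\mathcal{A}_{h}^{(3)}(\zeta)\frac{(2\pi)^{3}}{6}(n-n_{0})^{3}\right|\leq M\left|\ln(h)\right|^{\beta+3\gamma-4}
\]
for some constant $M>0$. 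Since $\beta<4-3\gamma$ this tends to $0$, so $e^{-it\mathcal{A}_{h}^{(3)}(\zeta)\frac{(2\pi)^{3}}{6}(n-n_{0})^{3}}-1=O\left(\left|\ln(h)\right|^{\beta+3\gamma-4}\right)$ uniformly over $n\in\Delta$ and $t\in\left[0,\left|\ln(h)\right|^{\beta}\right]$; summing over $\Delta$ and using $\sum_{n\in\mathbb{N}}\left|a_{n}\right|^{2}=1+O\left|\frac{1}{\ln(h)^{\infty}}\right|$ from Theorem~4.12 bounds the $\Delta$-part by $O\left(\left|\ln(h)\right|^{\beta+3\gamma-4}\right)$. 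Combining the two parts — the latter dominates the former, since $\left|\ln(h)\right|^{\beta+3\gamma-4}$ exceeds any $\left|\ln(h)\right|^{-k}$ — gives $\varepsilon(t)=O\left(\left|\ln(h)\right|^{\beta+3\gamma-4}\right)$, which is the assertion.

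I do not expect a genuinely hard step here: the whole estimate is the order-$3$ analogue of Proposition~5.2 and rests entirely on Lemma~6.1 together with the already-recorded smallness of the tail $\Gamma$. The only point that needs a word of care is the passage from Lemma~6.1 — which controls $\mathcal{A}_{h}^{(3)}\circ\mathcal{Y}_{h}$ on $[-1,1]$ — to a bound for $\mathcal{A}_{h}^{(3)}$ evaluated at the Lagrange point $\zeta$; this is legitimate precisely because the mean-value theorem produces $\zeta$ inside the interval $\mathcal{Y}_{h}([-1,1])$, i.e. $\zeta=\mathcal{Y}_{h}(\lambda)$ for some $\lambda\in[-1,1]$. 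One should also note that $4-3\gamma>0$ (immediate from $\gamma<1$), so the announced error term genuinely tends to $0$.
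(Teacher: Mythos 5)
Your proposal is correct and follows essentially the same route as the paper's own proof of this proposition: the same difference $\varepsilon(t)$, the same Taylor--Lagrange expansion with remainder $\mathcal{A}_{h}^{(3)}(\zeta)\frac{(2\pi)^{3}}{6}(n-n_{0})^{3}$, the same split over $\Delta$ and $\Gamma$ using Lemma 4.15 for the tail and Lemma 6.1 for the main part. The extra remarks you add (why $\zeta$ lies in $\mathcal{Y}_{h}([-1,1])$ so that Lemma 6.1 applies, and that $4-3\gamma>0$) are sound clarifications of points the paper leaves implicit.
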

\begin{proof}
Let us introduce the difference $\varepsilon(t):=\varepsilon(t,h)$
defined by :\[
\varepsilon(t):=\left|{\displaystyle \mathbf{a}(t)}-{\displaystyle \sum_{n\in\mathbb{N}}\left|a_{n}\right|^{2}e^{-itQ_{2}(n)}}\right|.\]
Taylor-Lagrange's formula gives the existence of a real number $\zeta=\zeta(n,h,E)\in\mathcal{Y}_{h}\left([-1,1]\right)$
such that :\[
\mathcal{A}_{h}(2\pi n)=\mathcal{A}_{h}(2\pi n_{0})+\mathcal{A}_{h}^{\prime}(2\pi n_{0})2\pi(n-n_{0})\]
\[
+\mathcal{A}_{h}^{\prime\prime}(2\pi n_{0})2\pi^{2}(n-n_{0})^{2}+\mathcal{A}_{h}^{(3)}(\zeta)\frac{(2\pi)^{3}}{6}(n-n_{0})^{3};\]
hence, for all $t\geq0$ we get\[
\varepsilon(t)=\left|\sum_{n\in\mathbb{N}}\left|a_{n}\right|^{2}e^{-it\left(Q_{2}(n)+\mathcal{A}_{h}^{(3)}(\zeta)\frac{(2\pi)^{3}}{6}(n-n_{0})^{3}\right)}-{\displaystyle \sum_{n\in\mathbb{N}}\left|a_{n}\right|^{2}e^{-itQ_{2}(n)}}\right|\]
\[
=\left|\sum_{n\in\mathbb{N}}\left|a_{n}\right|^{2}e^{-itQ_{2}(n)}\left[e^{-it\mathcal{A}_{h}^{(3)}(\zeta)\frac{(2\pi)^{3}}{6}(n-n_{0})^{3}}-1\right]\right|.\]
With the sets $\Gamma$, $\Delta$ and by triangular inequality, we
obtain for all $t\geq0$\[
\varepsilon(t)\leq\left|{\displaystyle \sum_{n\in\Delta}\left|a_{n}\right|^{2}e^{-itQ_{2}(n)}\left[e^{-it\mathcal{A}_{h}^{(3)}(\zeta)\frac{(2\pi)^{3}}{6}(n-n_{0})^{3}}-1\right]}\right|\]
\[
+\left|{\displaystyle \sum_{n\in\Gamma}\left|a_{n}\right|^{2}e^{-itQ_{2}(n)}\left[e^{-it\mathcal{A}_{h}^{(3)}(\zeta)\frac{(2\pi)^{3}}{6}(n-n_{0})^{3}}-1\right]}\right|.\]
Since :\foreignlanguage{english}{\[
\left|{\displaystyle \sum_{n\in\Gamma}\left|a_{n}\right|^{2}e^{-itQ_{2}(n)}\left[e^{-it\mathcal{A}_{h}^{(3)}(\zeta)\frac{(2\pi)^{3}}{6}(n-n_{0})^{3}}-1\right]}\right|\leq2\sum_{n\in\Gamma}\left|a_{n}\right|^{2}=O\left|\frac{1}{\ln(h)^{\infty}}\right|\]
}by the lemma 4.15. 

On the other hand, with the lemma 6.1: for all integer $n\in\Delta$
and for all real number $t\in\left[0,\left|\ln(h)\right|^{\beta}\right]$\[
\left|t\mathcal{A}_{h}^{(3)}(\zeta)\frac{(2\pi)^{3}}{6}(n-n_{0})^{3}\right|\leq M\left|\ln(h)\right|^{\beta+3\gamma-4}\]
where $M>0$. Thus, for all integer $n\in\Delta$ and for all real
number $t\in\left[0,\left|\ln(h)\right|^{\beta}\right]$ we have \[
e^{-it\mathcal{A}_{h}^{(3)}(\zeta)\frac{(2\pi)^{3}}{6}(n-n_{0})^{3}}-1=O\left(\left|\ln(h)\right|^{\beta+3\gamma-4}\right)\]
hence for all $t\in\left[0,\left|\ln(h)\right|^{\beta}\right]$ we
get\foreignlanguage{english}{\[
\left|{\displaystyle \sum_{n\in\Delta}\left|a_{n}\right|^{2}e^{-itQ_{2}(n)}\left[e^{-it\mathcal{A}_{h}^{(3)}(\zeta)\frac{(2\pi)^{3}}{6}(n-n_{0})^{3}}-1\right]}\right|\]
}\[
\leq O\left(\left|\ln(h)\right|^{\beta+3\gamma-4}\right)\sum_{n\in\Delta}\left|a_{n}\right|^{2}\]
\[
\leq O\left(\left|\ln(h)\right|^{\beta+3\gamma-4}\right)\sum_{n\in\mathbb{N}}\left|a_{n}\right|^{2}=O\left(\left|\ln(h)\right|^{\beta+3\gamma-4}\right).\]
So, for all $t\in\left[0,\left|\ln(h)\right|^{\beta}\right]$ we get
finally \[
\varepsilon(t)=O\left(\left|\ln(h)\right|^{\beta+3\gamma-4}\right).\]
\end{proof}
\begin{defn}
Let us define the second order approximation of the partial autocorrelation
$\mathbf{a}(t)$ by :\[
\mathbf{a}_{\mathbf{2}}\,:\, t\mapsto\sum_{n\in\mathbb{N}}\left|a_{n}\right|^{2}e^{-itQ_{2}(n)}.\]
And we also define the function \[
\mathbf{\widetilde{a_{2}}}:\, t\mapsto\sum_{n\in\mathbb{N}}\left|a_{n}\right|^{2}e^{-it\left(\mathcal{A}_{h}^{\prime}(2\pi n_{0})2\pi(n-n_{0})+\mathcal{A}_{h}^{\prime\prime}(2\pi n_{0})2\pi^{2}(n-n_{0})^{2}\right)}.\]

\end{defn}
Thus, we get\[
\mathbf{a_{2}}(t)=e^{-it\mathcal{A}_{h}(2\pi n_{0})}\mathbf{\widetilde{a_{2}}}(t)\]
and, we have also\[
\left|\mathbf{a_{2}}(t)\right|=\left|\mathbf{\mathbf{\mathbf{\widetilde{a_{2}}}}}(t)\right|=\left|\sum_{n\in\mathbb{N}}\left|a_{n}\right|^{2}e^{-it\left(\mathcal{A}_{h}^{\prime}(2\pi n_{0})2\pi(n-n_{0})+\mathcal{A}_{h}^{\prime\prime}(2\pi n_{0})2\pi^{2}(n-n_{0})^{2}\right)}\right|.\]

\subsection{Full revival theorem}

Let us start by some notations.
\begin{defn}
Let us define the revival time $T_{rev}=T_{rev}(h,E)$ by : \[
T_{rev}:=\frac{1}{\pi\mathcal{A}_{h}^{\prime\prime}(2\pi n_{0})}\]
and we also denote the integer $N_{h}=N(h)$ defined by :\[
N_{h}:=E\left[\frac{T_{rev}}{T_{hyp}}\right]\in\mathbb{N}.\]

\end{defn}
Hence, there exists an unique real number $\Theta_{h}\in[0,1[$ such
that :\[
\frac{T_{rev}}{T_{hyp}}=N_{h}+\Theta_{h}.\]

\begin{prop}
If we suppose that \[
\lim_{h\rightarrow0}\left(\mathcal{Y}_{h}^{\prime\prime}\circ\mathcal{A}_{h}\right)(2\pi n_{0})=K\]
where $K\neq0$ is a constant, then\[
T_{rev}=\frac{\left|\ln(h)\right|^{3}}{K\left(-V^{''}(0)\right)^{\frac{3}{2}}}+O(1).\]
\end{prop}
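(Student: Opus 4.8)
The plan is to reduce the whole statement to a single asymptotic estimate, namely that of $\mathcal{A}_h''(2\pi n_0)$, since by definition $T_{rev}=1/(\pi\mathcal{A}_h''(2\pi n_0))$. For this I would recycle the inverse-function differentiation identity already written down in the proof of Lemma~5.1: for every $x\in\mathcal{Y}_h([-1,1])$,
\[
\mathcal{A}_h''(x)=-\frac{(\mathcal{Y}_h''\circ\mathcal{A}_h)(x)}{(\mathcal{Y}_h'\circ\mathcal{A}_h)^3(x)}.
\]
Since $n_0\in\mathbf{I}_h$ we have $2\pi n_0\in\mathcal{Y}_h([-1,1])$, so we may evaluate this identity at $x=2\pi n_0$, and moreover $\mathcal{A}_h(2\pi n_0)\in[-1,1]$, so Proposition~3.2 and the standing hypothesis both apply at the point $\mathcal{A}_h(2\pi n_0)$.

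Next I would substitute the two available estimates. The numerator is the hypothesis: $(\mathcal{Y}_h''\circ\mathcal{A}_h)(2\pi n_0)=K+o(1)$ as $h\to 0$. The denominator is controlled by Proposition~3.2: uniformly on $[-1,1]$ one has $\mathcal{Y}_h'(\lambda)=\frac{\ln h}{\sqrt{-V''(0)}}+O(1)=\frac{\ln h}{\sqrt{-V''(0)}}\bigl(1+O(1/|\ln h|)\bigr)$, hence
\[
(\mathcal{Y}_h'\circ\mathcal{A}_h)^3(2\pi n_0)=\frac{(\ln h)^3}{(-V''(0))^{3/2}}\bigl(1+O(1/|\ln h|)\bigr).
\]
Dividing, and using $(\ln h)^3=-|\ln h|^3$ to sort out the sign, we obtain
\[
\mathcal{A}_h''(2\pi n_0)=\frac{(K+o(1))(-V''(0))^{3/2}}{|\ln h|^3}\bigl(1+O(1/|\ln h|)\bigr),
\]
and therefore, inverting and dividing by $\pi$,
\[
T_{rev}=\frac{1}{\pi\mathcal{A}_h''(2\pi n_0)}=\frac{|\ln h|^3}{\pi K(-V''(0))^{3/2}}\,(1+o(1)),
\]
which is the asserted leading behaviour, the correction being absorbed into the stated remainder.

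The one genuinely delicate point is the precise order of that remainder. With only the limit hypothesis on $\mathcal{Y}_h''\circ\mathcal{A}_h$ and the bare $O(1)$ remainder of Proposition~3.2, the computation above yields a relative error $o(1)$ (at best $O(1/|\ln h|)$, i.e.\ an absolute error $O(|\ln h|^2)$, if that convergence is known with a rate). To push the remainder down to $O(1)$ one has to go back to the explicit expressions behind $\mathcal{Y}_h$: differentiate $F_h(\lambda h)$ and the $\arccos$ term exactly as in the proof of Lemma~5.1, and extract from the singular actions $S^{\pm}$, from $\varepsilon$, and from the $\arg\Gamma$ contribution a refinement of $\mathcal{Y}_h'(\lambda)=\frac{\ln h}{\sqrt{-V''(0)}}+O(1)$ accurate enough (together with a sufficiently fast convergence of $\mathcal{Y}_h''\circ\mathcal{A}_h$ to $K$) that after cubing the denominator and inverting nothing of size larger than $O(1)$ survives. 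I expect this bookkeeping with the sub-leading terms to be the main obstacle; everything else is the one-line substitution carried out above.
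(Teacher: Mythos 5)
Your proof follows exactly the paper's route: the paper's own (one-line) proof consists of writing $T_{rev}=-\bigl(\mathcal{Y}_h'\circ\mathcal{A}_h\bigr)^3(2\pi n_0)\big/\bigl(\pi(\mathcal{Y}_h''\circ\mathcal{A}_h)(2\pi n_0)\bigr)$ and substituting the hypothesis together with Proposition 3.2, which is precisely your computation. Your two reservations are both well-founded and are not resolved by the paper: its statement and the last line of its proof drop the factor $\pi$ that is visibly present in its own intermediate identity (an apparent typo), and it offers no justification for the claimed $O(1)$ remainder --- with only $\mathcal{Y}_h''\circ\mathcal{A}_h\to K$ and the bare $O(1)$ error of Proposition 3.2 one indeed gets only a multiplicative $1+o(1)$, i.e.\ an additive $o(|\ln h|^3)$, exactly as you observe.
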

\begin{proof}
Since\[
T_{rev}=-\frac{\left(\mathcal{Y}_{h}^{\prime}\circ\mathcal{A}_{h}\right)^{3}(2\pi n_{0})}{\pi\left(\mathcal{Y}_{h}^{\prime\prime}\circ\mathcal{A}_{h}\right)(2\pi n_{0})};\]
if we suppose that \[
\lim_{h\rightarrow0}\left(\mathcal{Y}_{h}^{\prime\prime}\circ\mathcal{A}_{h}\right)(2\pi n_{0})=K;\]
then we deduce \[
T_{rev}=\frac{-\ln(h)^{3}}{K\left(-V^{''}(0)\right)^{\frac{3}{2}}}+O(1).\]

\end{proof}
The full revival theorem is the following :
\begin{thm}
With the previous notations we have :

\textbf{(i)} if $\Theta_{h}=0$; then for all $t\geq0$ : \[
\mathbf{\widetilde{a_{2}}}(t+N_{h}T_{hyp})=\mathbf{\widetilde{a_{2}}}(t+T_{rev})=\mathbf{\widetilde{a_{2}}}(t);\]

\textbf{(ii)} if $\Theta_{h}\in]0,1[$, then for all $t\geq0$\[
\mathbf{\mathbf{\widetilde{a_{2}}}}(t+N_{h}T_{hyp})=\mathbf{\widetilde{a_{2}}}(t)+O\left|\frac{1}{\ln(h)^{2-2\gamma}}\right|;\]
in particular, we have\[
\left|\mathbf{a}_{\mathbf{2}}(t+N_{h}T_{hyp})\right|=\left|\mathbf{a_{2}}(t)\right|+O\left|\frac{1}{\ln(h)^{2-2\gamma}}\right|.\]
\end{thm}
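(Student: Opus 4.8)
The plan is to follow each phase in $\mathbf{\widetilde{a_{2}}}$ through the translation $t\mapsto t+N_{h}T_{hyp}$ and then show that the leftover is negligible. Writing $\mathcal{A}_{h}^{\prime}(2\pi n_{0})=1/T_{hyp}$ and $\mathcal{A}_{h}^{\prime\prime}(2\pi n_{0})=1/(\pi T_{rev})$, the second order approximation reads
\[
\mathbf{\widetilde{a_{2}}}(t)=\sum_{n\in\mathbb{N}}\left|a_{n}\right|^{2}\exp\left(-it\left(\frac{2\pi(n-n_{0})}{T_{hyp}}+\frac{2\pi(n-n_{0})^{2}}{T_{rev}}\right)\right).
\]
Replacing $t$ by $t+N_{h}T_{hyp}$ multiplies the $n$-th summand by $\exp\left(-i\bigl(2\pi N_{h}(n-n_{0})+2\pi(n-n_{0})^{2}\,N_{h}T_{hyp}/T_{rev}\bigr)\right)$. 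Since $N_{h}\in\mathbb{N}$ and $n-n_{0}\in\mathbb{Z}$, the term $2\pi N_{h}(n-n_{0})$ is an integer multiple of $2\pi$ and drops out. For the remaining term, $N_{h}T_{hyp}/T_{rev}=N_{h}/(N_{h}+\Theta_{h})=1-\Theta_{h}\,T_{hyp}/T_{rev}$, and since $(n-n_{0})^{2}\in\mathbb{Z}$ the contribution of the $1$ is again a multiple of $2\pi$. Hence the translation amounts to multiplying the $n$-th summand by the single residual factor $\exp\left(2\pi i(n-n_{0})^{2}\,\Theta_{h}\,T_{hyp}/T_{rev}\right)$.

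Part (i) then follows at once: if $\Theta_{h}=0$ this residual factor equals $1$ for every $n$, so $\mathbf{\widetilde{a_{2}}}(t+N_{h}T_{hyp})=\mathbf{\widetilde{a_{2}}}(t)$; and since $\Theta_{h}=0$ means precisely $T_{rev}=N_{h}T_{hyp}$, the same bookkeeping with the shift $T_{rev}$ gives $\mathbf{\widetilde{a_{2}}}(t+T_{rev})=\mathbf{\widetilde{a_{2}}}(t)$.

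For part (ii) I would estimate
\[
\mathbf{\widetilde{a_{2}}}(t+N_{h}T_{hyp})-\mathbf{\widetilde{a_{2}}}(t)=\sum_{n\in\mathbb{N}}\left|a_{n}\right|^{2}e^{-it(\cdots)}\left(e^{2\pi i(n-n_{0})^{2}\,\Theta_{h}T_{hyp}/T_{rev}}-1\right)
\]
by splitting the sum over the sets $\Delta$ and $\Gamma=\mathbb{N}\setminus\Delta$ introduced in Section 4.3. The quantitative input is that $T_{hyp}/T_{rev}=\pi\mathcal{A}_{h}^{\prime\prime}(2\pi n_{0})/\mathcal{A}_{h}^{\prime}(2\pi n_{0})=O(1/\left|\ln h\right|^{2})$: indeed Lemma 5.1 gives $\mathcal{A}_{h}^{\prime\prime}(2\pi n_{0})=\left(\mathcal{A}_{h}^{\prime\prime}\circ\mathcal{Y}_{h}\right)(\mathcal{A}_{h}(2\pi n_{0}))=O(1/\left|\ln h\right|^{3})$ while Proposition 3.2 gives $\mathcal{A}_{h}^{\prime}(2\pi n_{0})\sim 2\pi\sqrt{-V^{''}(0)}/\left|\ln h\right|$, and $0\leq\Theta_{h}<1$. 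On $\Delta$ we have $\left|n-n_{0}\right|\leq\left|\ln h\right|^{\gamma}$, so $2\pi(n-n_{0})^{2}\,\Theta_{h}T_{hyp}/T_{rev}=O(\left|\ln h\right|^{2\gamma-2})$, which tends to $0$ because $\gamma<1$; hence $e^{2\pi i(n-n_{0})^{2}\,\Theta_{h}T_{hyp}/T_{rev}}-1=O(\left|\ln h\right|^{2\gamma-2})$ uniformly on $\Delta$ and in $t$, and summing against $\sum_{n\in\Delta}\left|a_{n}\right|^{2}\leq 1+O(\left|\ln h\right|^{-\infty})$ (Theorem 4.12) bounds the $\Delta$-part by $O(\left|\ln h\right|^{2\gamma-2})$. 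On $\Gamma$ the factor in parentheses is at most $2$ and $\sum_{n\in\Gamma}\left|a_{n}\right|^{2}=O(\left|\ln h\right|^{-\infty})$ by Lemma 4.15, so the $\Gamma$-part is negligible. This yields $\mathbf{\widetilde{a_{2}}}(t+N_{h}T_{hyp})=\mathbf{\widetilde{a_{2}}}(t)+O(\left|\ln h\right|^{-(2-2\gamma)})$ uniformly in $t\geq 0$; finally $\left|\mathbf{a_{2}}\right|=\left|\mathbf{\widetilde{a_{2}}}\right|$ together with the reverse triangle inequality gives the stated estimate for $\left|\mathbf{a_{2}}(t+N_{h}T_{hyp})\right|$.

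The computation is largely bookkeeping with the two time scales; the one point that needs genuine care is the bound $T_{hyp}/T_{rev}=O(1/\left|\ln h\right|^{2})$ and the check that the exponents assemble into $2\gamma-2<0$, i.e. that the localization width $\left|\ln h\right|^{\gamma}$ of the initial wave packet stays small compared with the revival scale $\left|\ln h\right|^{3}$, so that the residual quadratic-phase mismatch coming from $T_{rev}\neq N_{h}T_{hyp}$ does not spoil the approximate periodicity. This is exactly where the hypothesis $\gamma<1$ is used.
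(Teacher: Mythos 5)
Your proof is correct and follows essentially the same route as the paper: reduce the shift by $N_{h}T_{hyp}$ to the residual quadratic phase factor $e^{2i\pi(n-n_{0})^{2}\Theta_{h}T_{hyp}/T_{rev}}$, then split the sum over $\Delta$ and $\Gamma$, bounding the $\Delta$-part by $O\left(\left|\ln h\right|^{2\gamma-2}\right)$ and the $\Gamma$-part via Lemma 4.15. Your explicit derivation of $T_{hyp}/T_{rev}=O\left(1/\left|\ln h\right|^{2}\right)$ from Lemma 5.1 and Proposition 3.2 is a welcome detail that the paper's proof leaves implicit.
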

\begin{proof}
For all \textit{$t\geq0$}\[
\mathbf{\widetilde{a_{2}}}(t+N_{h}T_{hyp})={\displaystyle \sum_{n\in\mathbb{N}}\left|a_{n}\right|^{2}e^{-2i\pi\frac{t+N_{h}T_{hyp}}{T_{hyp}}(n-n_{0})}}e^{-2i\pi\frac{(t+T_{rev}-T_{hyp}\Theta_{h})}{T_{rev}}(n-n_{0})^{2}}\]
\[
={\displaystyle \sum_{n\in\mathbb{N}}\left|a_{n}\right|^{2}e^{-2i\pi\frac{t}{T_{hyp}}(n-n_{0})}}e^{-2i\pi\frac{(t-T_{hyp}\Theta_{h})}{T_{rev}}(n-n_{0})^{2}}.\]
If we suppose that \textit{$\Theta_{h}=0$}; then for all $t\geq0$\[
\mathbf{\widetilde{a_{2}}}(t+N_{h}T_{hyp})=\mathbf{\widetilde{a_{2}}}(t).\]
Suppose that $\Theta_{h}\in]0,1[$, then for all \textit{$t\in\mathbb{R}_{+}$}\[
\left|\mathbf{\widetilde{a_{2}}}(t+N_{h}T_{hyp})-\mathbf{\widetilde{a_{2}}}(t)\right|\]
\[
=\left|{\displaystyle \sum_{n\in\mathbb{N}}\left|a_{n}\right|^{2}e^{-2i\pi\frac{t}{T_{hyp}}(n-n_{0})}}e^{-2i\pi\frac{(t-T_{hyp}\Theta_{h})}{T_{rev}}(n-n_{0})^{2}}\right.\]
\[
\left.-{\displaystyle \sum_{n\in\mathbb{N}}\left|a_{n}\right|^{2}e^{-2i\pi\frac{t}{T_{hyp}}(n-n_{0})}}e^{-2i\pi\frac{t}{T_{rev}}(n-n_{0})^{2}}\right|\]
\[
=\left|{\displaystyle \sum_{n\in\mathbb{N}}\left|a_{n}\right|^{2}e^{-2i\pi\frac{t}{T_{hyp}}(n-n_{0})}}e^{-2i\pi\frac{t}{T_{rev}}(n-n_{0})^{2}}\left(e^{2i\pi\Theta_{h}\frac{T_{hyp}}{T_{rev}}(n-n_{0})^{2}}-1\right)\right|\]
\[
\leq\left|{\displaystyle \sum_{n\in\Delta}\left|a_{n}\right|^{2}e^{-2i\pi\frac{t}{T_{hyp}}(n-n_{0})}}e^{-2i\pi\frac{t}{T_{rev}}(n-n_{0})^{2}}\left(e^{2i\pi\Theta_{h}\frac{T_{hyp}}{T_{rev}}(n-n_{0})^{2}}-1\right)\right|\]
\[
+\left|{\displaystyle \sum_{n\in\Gamma}\left|a_{n}\right|^{2}e^{-2i\pi\frac{t}{T_{hyp}}(n-n_{0})}}e^{-2i\pi\frac{t}{T_{rev}}(n-n_{0})^{2}}\left(e^{2i\pi\Theta_{h}\frac{T_{hyp}}{T_{rev}}(n-n_{0})^{2}}-1\right)\right|.\]
For all integer $n\in\Delta$ we have $\left(n-n_{0}\right)^{2}\leq\left|\ln(h)\right|^{2\gamma}$,
hence, for $h\rightarrow0$, we get, for all integer $n\in\Delta$
\[
e^{2i\pi\Theta_{h}\frac{T_{hyp}}{T_{rev}}(n-n_{0})^{2}}=1+O\left(\left|\ln(h)\right|^{2\gamma-2}\right).\]
Thus, for all $t\geq0$\[
\left|{\displaystyle \sum_{n\in\Delta}\left|a_{n}\right|^{2}e^{-2i\pi\frac{t}{T_{hyp}}(n-n_{0})}}e^{-2i\pi\frac{t}{T_{rev}}(n-n_{0})^{2}}\left(e^{2i\pi\Theta_{h}\frac{T_{hyp}}{T_{rev}}(n-n_{0})^{2}}-1\right)\right|\]
\[
\leq O\left(\left|\ln(h)\right|^{2\gamma-2}\right)\sum_{n\in\Delta}\left|a_{n}\right|^{2}\]
\[
\leq O\left(\left|\ln(h)\right|^{2\gamma-2}\right)\sum_{n\in\mathbb{N}}\left|a_{n}\right|^{2}=O\left(\left|\ln(h)\right|^{2\gamma-2}\right).\]
On the other hand, we have\[
\left|{\displaystyle \sum_{n\in\Gamma}\left|a_{n}\right|^{2}e^{-2i\pi\frac{t}{T_{hyp}}(n-n_{0})}}e^{-2i\pi\frac{t}{T_{rev}}(n-n_{0})^{2}}\left(e^{2i\pi\Theta_{h}\frac{T_{hyp}}{T_{rev}}(n-n_{0})^{2}}-1\right)\right|\]
\[
\leq{\displaystyle \sum_{n\in\Gamma}2\left|a_{n}\right|^{2}}=O\left|\frac{1}{\ln(h)^{\infty}}\right|.\]
\end{proof}
\begin{rem}
For $h\rightarrow0$, we have \[
\frac{N_{h}T_{hyp}}{T_{rev}}=\frac{T_{rev}-\Theta_{h}T_{hyp}}{T_{rev}}=1-\frac{\Theta_{h}T_{hyp}}{T_{rev}}\rightarrow1\]
thus, for $h\rightarrow0$ we have also \[
N_{h}T_{hyp}\sim T_{rev}.\]

\end{rem}
The previous theorem show that the function $t\mapsto\mathbf{\widetilde{a_{2}}}(t)$,
is, modulo $O\left(\left|\ln(h)\right|^{2\gamma-2}\right)$, periodic,
with a period equivalent ( for $h\rightarrow0)$ to $T_{rev}$. This
is the full revival phenomenon.

\subsection{Fractional revivals theorem}

The aim of this section is to study the dynamics for time close to
$\frac{p}{q}T_{rev}$ , where $\frac{p}{q}\in\mathbb{Q}.$

\subsubsection{Preliminaries }
\begin{notation}
For all $(p,q)\in\mathbb{Z}\times\mathbb{N}^{*}$ let us consider
the sequence $\left(\sigma_{h}(p,q)\right)$ defined by : \[
\left(\sigma_{h}(p,q)\right)_{n}:=e^{-2i\pi\frac{p}{q}(n-n_{0})^{2}}\;,\, n\in\mathbb{Z}.\]

\end{notation}
The periodicity of this sequence is caracterised by the following
easy proposition. 
\begin{prop}
For all $(p,q)\in\mathbb{Z}\times\mathbb{N}^{*}$, the sequence $\left(\sigma_{h}(p,q)\right)_{n}$
is $\ell$-periodic if and only if the integer $\ell$ satisfy the
equation :\[
\forall m\in\mathbb{Z},\;\frac{2p\ell}{q}m+\frac{p\ell^{2}}{q}\equiv0\;(mod\,1).\]

\end{prop}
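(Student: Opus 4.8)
The plan is to unwind the definition of $\ell$-periodicity for the sequence and reduce it, by taking arguments of complex exponentials, to the stated congruence. First I would record what $\ell$-periodicity means: $\left(\sigma_h(p,q)\right)_{n+\ell}=\left(\sigma_h(p,q)\right)_n$ for every $n\in\mathbb{Z}$, that is,
\[
e^{-2i\pi\frac{p}{q}(n+\ell-n_0)^2}=e^{-2i\pi\frac{p}{q}(n-n_0)^2}\qquad\text{for all }n\in\mathbb{Z}.
\]
Dividing the two sides and using $e^{2i\pi x}=1\iff x\in\mathbb{Z}$, this is equivalent to
\[
\frac{p}{q}\bigl[(n+\ell-n_0)^2-(n-n_0)^2\bigr]\equiv 0\pmod{1}\qquad\text{for all }n\in\mathbb{Z}.
\]

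Next I would expand the bracket by the difference-of-squares identity: writing it as $\bigl((n-n_0)+\ell\bigr)^2-(n-n_0)^2=2\ell(n-n_0)+\ell^2$, the condition becomes $\frac{2p\ell}{q}(n-n_0)+\frac{p\ell^2}{q}\equiv 0\pmod{1}$ for all $n\in\mathbb{Z}$. Since $n_0\in\mathbb{N}\subset\mathbb{Z}$ (Definition 4.5), the reindexing $m=n-n_0$ is a bijection of $\mathbb{Z}$ onto itself, so this is equivalent to
\[
\frac{2p\ell}{q}m+\frac{p\ell^2}{q}\equiv 0\pmod{1}\qquad\text{for all }m\in\mathbb{Z},
\]
which is exactly the asserted equation. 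As every step is an equivalence, the converse direction comes for free.

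There is essentially no obstacle here, as the proposition is elementary; the only point that deserves an explicit word is that $n_0$ is an integer, so that the substitution $m=n-n_0$ legitimately runs over all of $\mathbb{Z}$, and the difference-of-squares expansion does the rest.
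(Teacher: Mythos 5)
Your proof is correct and complete: the paper itself omits the proof of this proposition (labelling it an ``easy proposition''), and your argument---unwinding the periodicity condition, using $e^{2i\pi x}=1\iff x\in\mathbb{Z}$, expanding $(m+\ell)^{2}-m^{2}=2\ell m+\ell^{2}$, and reindexing by $m=n-n_{0}$ with $n_{0}\in\mathbb{Z}$---is exactly the intended one. Every step is an equivalence, so both directions of the ``if and only if'' are established.
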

Now, we solve this equation.
\begin{prop}
Suppose $p\wedge q=1$, then the set $\mathcal{E}$ of solutions $\ell$
such that :\[
\forall m\in\mathbb{Z},\;\frac{2p\ell}{q}m+\frac{p\ell^{2}}{q}\equiv0\;(mod\,1)\]
is caracterised by :

\textbf{(i)} if $q$ is odd then $\mathcal{E}=\left\{ q\mathbb{Z}\right\} ;$

\textbf{(ii)} si $q$ even and $\frac{q}{2}$ odd then $\mathcal{E}=\left\{ q\mathbb{Z}\right\} ;$

\textbf{(iii)} si $q$ even et $\frac{q}{2}$ even then $\mathcal{E}=\left\{ \frac{q}{2}\mathbb{Z}\right\} .$\end{prop}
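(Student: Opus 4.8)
The plan is to strip the statement down to a pure divisibility question and then settle it by a short parity analysis. First I would remove the quantifier over $m$: requiring $\frac{2p\ell}{q}m+\frac{p\ell^{2}}{q}\equiv0\pmod 1$ for every $m\in\mathbb Z$ is equivalent to requiring it for $m=0$ and $m=1$ only. Indeed, $m=0$ forces $q\mid p\ell^{2}$, and then $m=1$ forces $q\mid 2p\ell$; conversely, once both divisibilities hold the affine-in-$m$ quantity $\frac{2p\ell}{q}m+\frac{p\ell^{2}}{q}$ is an integer for every $m$. Hence $\ell\in\mathcal E$ if and only if $q\mid p\ell^{2}$ and $q\mid 2p\ell$, and using $p\wedge q=1$ this simplifies to
\[
\mathcal E=\bigl\{\ell\in\mathbb Z\ :\ q\mid\ell^{2}\ \text{ and }\ q\mid 2\ell\bigr\}.
\]

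Next I would run the parity discussion. If $q$ is odd, then $q\mid 2\ell$ already gives $q\mid\ell$ (as $\gcd(2,q)=1$), and $q\mid\ell$ conversely implies both conditions, so $\mathcal E=q\mathbb Z$: this is case (i). If $q$ is even, write $q=2r$; then $q\mid 2\ell$ is exactly $r\mid\ell$, so I set $\ell=rk$ and rewrite $q\mid\ell^{2}$ as $2r\mid r^{2}k^{2}$, i.e.\ $2\mid rk^{2}$. When $r$ is odd (that is, $q\equiv2\pmod4$) this forces $k$ even, hence $q=2r\mid\ell$, giving $\mathcal E=q\mathbb Z$: case (ii). When $r$ is even (that is, $4\mid q$) the condition $2\mid rk^{2}$ is automatically satisfied, so the only constraint left is $r\mid\ell$, giving $\mathcal E=r\mathbb Z=\tfrac q2\mathbb Z$: case (iii).

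I do not anticipate a genuine obstacle. The one point needing care is the reduction in the first step — checking that the universally quantified congruence collapses to the two divisibilities obtained at $m=0,1$ — together with the bookkeeping in the second step of which of the two divisibilities is the binding one: for $q$ odd and for $q\equiv2\pmod4$ it is $q\mid2\ell$ that decides, whereas for $4\mid q$ the constraint $q\mid\ell^{2}$ becomes vacuous once $\tfrac q2\mid\ell$. A sanity check on small moduli such as $q=2,3,4,6,8$ would confirm the three formulas.
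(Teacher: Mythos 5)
Your proof is correct and follows essentially the same route as the paper: reduce the universally quantified congruence to the two divisibility conditions $q\mid\ell^{2}$ and $q\mid 2\ell$ obtained at $m=0,1$, strip off $p$ via $p\wedge q=1$ (Gauss's lemma), and finish with the same parity split on $q$ and $q/2$. The only cosmetic difference is that you parametrize by $q=2r$, $\ell=rk$ where the paper writes $\ell=k\,q/2$ directly; the substance is identical.
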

\begin{proof}
Let us start by a remark : to be a multiple of $q$ is always a necessary
condition to be a solution. Now, let us study sufficient conditions.
We have\[
\ell\in\mathcal{E}\Leftrightarrow\forall m\in\mathbb{Z},\;\frac{2p\ell}{q}m+\frac{p\ell^{2}}{q}\equiv0\;(mod\,1)\]
\[
\Leftrightarrow\forall m\in\mathbb{Z},\; q|(2p\ell m+p\ell^{2});\]
and, since $p\wedge q=1$, by Gauss lemma we have \[
\ell\in\mathcal{E}\Leftrightarrow\forall m\in\mathbb{Z},\; q|(2\ell m+\ell^{2}).\]
Let $\ell$ an integer solution of $\mathcal{E}$; then if we take
$m=0$ we see that $q|\ell^{2}$, hence there exist $\alpha\in\mathbb{Z}$
such that $\ell^{2}=\alpha q$. On the other hand, since for all $m\in\mathbb{Z}$
we have $q|(2\ell m+\ell^{2})$ we deduce that for all $m\in\mathbb{Z}$,
there exists $\beta_{m}\in\mathbb{Z}$ such that $2\ell m+\ell^{2}=\beta_{m}q$.
Thus, since $\ell^{2}=\alpha q$, we get for all $m\in\mathbb{Z}$
the equality $2\ell m+\alpha q=\beta_{m}q$. In particular, with $m=1$
we deduce that :\[
2\ell=(\beta_{1}-\alpha)q.\]

$\bullet$ If q is odd : then a $\beta_{1}-\alpha$ is even; thus
$\ell=\underbrace{\frac{(\beta_{1}-\alpha)}{2}}_{\in\mathbb{Z}}q$,
and $q|\ell$, thus \textit{$\mathcal{E}=\left\{ q\mathbb{Z}\right\} $.}

$\bullet$If q is even : then $\ell=(\beta_{1}-\alpha)\frac{q}{2}$;
thus $\frac{q}{2}|\ell$. If we write $\ell=k\frac{q}{2}$ with $k\in\mathbb{Z}$,
then \[
\forall m\in\mathbb{Z},\;\frac{2p\ell}{q}m+\frac{p\ell^{2}}{q}=kpm+\frac{pk^{2}q}{4}.\]
Thus \[
\forall m\in\mathbb{Z},\; kpm+\frac{pk^{2}q}{4}\in\mathbb{Z}\Leftrightarrow pk^{2}q\in4\mathbb{Z}.\]
And, since $p$ is odd, finaly we get, with $\ell=k\frac{q}{2}$ :\[
\forall m\in\mathbb{Z},\;\frac{2p\ell}{q}m+\frac{p\ell^{2}}{q}\in\mathbb{Z}\Leftrightarrow k^{2}q\in4\mathbb{Z}.\]
Then, we have two cases :

$\bullet$ if $\frac{q}{2}$ is odd : then $k^{2}q\in4\mathbb{Z}\Leftrightarrow k^{2}$
is even, equivalent to $k$ even, hence $\ell=k\frac{q}{2}$ is a
multiple of $q$, hence \textit{$\mathcal{E}=\left\{ q\mathbb{Z}\right\} $.}

$\bullet$ If $\frac{q}{2}$ is even : then $q=4q'$ where $q'\in\mathbb{Z}^{*}$,
hence for all $k\in\mathbb{Z};\, k^{2}q=k^{2}4q'\in4\mathbb{Z}$ ,
thus \textit{$\mathcal{E}=\left\{ \frac{q}{2}\mathbb{Z}\right\} $.}
\end{proof}
For a period $\ell\in\mathbb{Z}$, let us defined the set of sequences
$\ell-$periodic with a scalar product.
\begin{defn}
For a integer $\ell$$\in\mathbb{Z}^{*}$; let us denote by $\mathfrak{S}_{\ell}(\mathbb{Z})$
set of sequences $\ell-$periodic : \[
\mathfrak{S}_{\ell}(\mathbb{Z}):=\left\{ u_{n}\in\mathbb{C}^{\mathbb{Z}};\,\forall n\in\mathbb{Z},\, u_{n+\ell}=u_{n}\right\} .\]

\end{defn}
So we have the :
\begin{prop}
The application\[
\left\langle \,,\,\right\rangle _{\mathfrak{S}_{\ell}}:\left\{ \begin{array}{cc}
\mathfrak{S}_{\ell}(\mathbb{Z})^{2}\rightarrow\mathbb{C}\\
\\(u,v)\mapsto\left\langle u,v\right\rangle _{\mathfrak{S}_{\ell}}:=\frac{1}{|\ell|}{\displaystyle \sum_{k=0}^{|\ell|-1}u_{k}\overline{v_{k}}}\end{array}\right.\]
is a Hermitean product on the space $\mathfrak{S}_{\ell}(\mathbb{Z})$
.\end{prop}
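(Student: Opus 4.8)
The plan is to identify $\mathfrak{S}_{\ell}(\mathbb{Z})$ with $\mathbb{C}^{|\ell|}$ and then recognise $\left\langle \,,\,\right\rangle_{\mathfrak{S}_{\ell}}$ as a positive scalar multiple of the canonical Hermitian product. First I would note that $\mathfrak{S}_{\ell}(\mathbb{Z})$ is a complex vector space, being the kernel of the linear operator $(u_n)_n\mapsto(u_{n+\ell}-u_n)_n$ on $\mathbb{C}^{\mathbb{Z}}$, and that the restriction map $\rho:u\mapsto(u_0,\ldots,u_{|\ell|-1})$ is a linear bijection from $\mathfrak{S}_{\ell}(\mathbb{Z})$ onto $\mathbb{C}^{|\ell|}$: an $\ell$-periodic (equivalently $|\ell|$-periodic) sequence is uniquely determined by its values on $\{0,\ldots,|\ell|-1\}$, and conversely any such tuple extends to a unique element of $\mathfrak{S}_{\ell}(\mathbb{Z})$. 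Under $\rho$ the proposed form is precisely $\frac{1}{|\ell|}$ times the standard Hermitian product $\langle z,w\rangle=\sum_{k}z_k\overline{w_k}$ on $\mathbb{C}^{|\ell|}$.

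Then I would check the three defining properties, all of which follow by working termwise in a finite sum. Linearity in the first variable and conjugate-linearity in the second come from distributivity of the sum together with $\overline{\lambda v_k}=\overline{\lambda}\,\overline{v_k}$; Hermitian symmetry $\langle v,u\rangle_{\mathfrak{S}_{\ell}}=\overline{\langle u,v\rangle_{\mathfrak{S}_{\ell}}}$ holds because $|\ell|$ is a positive real and complex conjugation is additive and swaps the two factors in each term; and $\langle u,u\rangle_{\mathfrak{S}_{\ell}}=\frac{1}{|\ell|}\sum_{k=0}^{|\ell|-1}|u_k|^2\geq 0$ is a sum of nonnegative reals.

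Finally, for definiteness I would observe that $\langle u,u\rangle_{\mathfrak{S}_{\ell}}=0$ forces $u_0=\cdots=u_{|\ell|-1}=0$, and then invoke $\ell$-periodicity: writing an arbitrary $n\in\mathbb{Z}$ as $n=k+m|\ell|$ with $0\le k<|\ell|$ and $m\in\mathbb{Z}$ gives $u_n=u_k=0$, so $u$ is the zero sequence of $\mathfrak{S}_{\ell}(\mathbb{Z})$. This last step — promoting ``vanishes on one period'' to ``vanishes everywhere'' — is the only point where the specific structure of $\mathfrak{S}_{\ell}(\mathbb{Z})$ genuinely enters; everything else is the routine remark that a positive rescaling of the standard Hermitian form on a finite-dimensional space is again a Hermitian inner product, so I do not anticipate any real obstacle.
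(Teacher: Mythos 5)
Your proof is correct and matches the paper's intent exactly: the paper simply states ``the proof is elementary,'' and your verification (sesquilinearity, Hermitian symmetry, positivity, and definiteness via the identification of an $\ell$-periodic sequence with its values on $\{0,\ldots,|\ell|-1\}$) is precisely the routine check being alluded to. No issues.
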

\begin{proof}
The proof is elementary.\end{proof}
\begin{prop}
Let us consider $\phi_{n}^{k}:=e^{-\frac{2i\pi kn}{\ell}}$ where
$(k,n)\in\mathbb{Z}^{2}$; then the family $\left\{ \left(\phi_{n}^{k}\right)_{n\in\mathbb{Z}}\right\} _{k=0...\ell-1}$
is an orthonormal basis of the space vector $\mathfrak{S}_{\ell}(\mathbb{Z})$.\end{prop}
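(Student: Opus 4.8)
The plan is to check the three defining properties of an orthonormal basis in turn: that every $\phi^{k}:=\left(\phi_{n}^{k}\right)_{n\in\mathbb{Z}}$ actually belongs to $\mathfrak{S}_{\ell}(\mathbb{Z})$, that the family $\left(\phi^{k}\right)_{k=0,\dots,\ell-1}$ is orthonormal for the Hermitean product $\left\langle\,,\,\right\rangle_{\mathfrak{S}_{\ell}}$, and that it spans $\mathfrak{S}_{\ell}(\mathbb{Z})$, this last point being obtained by a dimension count. I may assume $\ell\geq 1$ (for $\ell<0$ one replaces the index range by $k=0,\dots,|\ell|-1$; since $\phi^{k}$ depends on $k$ only through its residue modulo $\ell$, nothing changes). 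First, for every $k,n$,
\[
\phi_{n+\ell}^{k}=e^{-\frac{2i\pi k(n+\ell)}{\ell}}=e^{-\frac{2i\pi kn}{\ell}}\,e^{-2i\pi k}=e^{-\frac{2i\pi kn}{\ell}}=\phi_{n}^{k},
\]
so $\phi^{k}\in\mathfrak{S}_{\ell}(\mathbb{Z})$.

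Next I would compute, for $k,k'\in\{0,\dots,\ell-1\}$,
\[
\left\langle\phi^{k},\phi^{k'}\right\rangle_{\mathfrak{S}_{\ell}}=\frac{1}{\ell}\sum_{n=0}^{\ell-1}e^{-\frac{2i\pi kn}{\ell}}\,\overline{e^{-\frac{2i\pi k'n}{\ell}}}=\frac{1}{\ell}\sum_{n=0}^{\ell-1}r^{n},\qquad r:=e^{\frac{2i\pi(k'-k)}{\ell}}.
\]
If $k=k'$ then $r=1$ and the sum equals $1$. If $k\neq k'$ then $0<|k'-k|<\ell$, hence $r\neq 1$, and the geometric sum equals $\frac{1}{\ell}\cdot\frac{1-r^{\ell}}{1-r}=0$ because $r^{\ell}=e^{2i\pi(k'-k)}=1$. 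Thus $\left\langle\phi^{k},\phi^{k'}\right\rangle_{\mathfrak{S}_{\ell}}$ is $1$ if $k=k'$ and $0$ otherwise; in particular the vectors $\phi^{k}$, $k=0,\dots,\ell-1$, are pairwise distinct and linearly independent.

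Finally, the evaluation map $u\mapsto(u_{0},\dots,u_{\ell-1})$ is a linear isomorphism of $\mathfrak{S}_{\ell}(\mathbb{Z})$ onto $\mathbb{C}^{\ell}$ (an $\ell$-periodic sequence is determined by its values on one period, and those values may be prescribed arbitrarily), so $\dim_{\mathbb{C}}\mathfrak{S}_{\ell}(\mathbb{Z})=\ell$. Having produced $\ell$ orthonormal --- hence linearly independent --- elements $\phi^{0},\dots,\phi^{\ell-1}$ of this $\ell$-dimensional space, I conclude that they form an orthonormal basis, which is the assertion. There is essentially no obstacle here; the only step requiring a little attention is the evaluation of the geometric sum when $k\neq k'$, i.e.\ the remark that $r\neq1$, which is precisely where it matters that $k$ and $k'$ run over a system of representatives of $\mathbb{Z}/\ell\mathbb{Z}$.
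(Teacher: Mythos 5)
Your proof is correct and follows essentially the same route as the paper: check that each $\phi^{k}$ is $\ell$-periodic, establish orthonormality via the geometric sum $\frac{1}{\ell}\sum_{n=0}^{\ell-1}r^{n}=\delta_{k,k'}$, and conclude by counting $\ell$ orthonormal vectors in the $\ell$-dimensional space $\mathfrak{S}_{\ell}(\mathbb{Z})$. You are merely more explicit than the paper about the dimension argument (the isomorphism with $\mathbb{C}^{\ell}$) and the sign of $\ell$, which the paper leaves implicit.
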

\begin{proof}
For $\ell>0$, the family \textit{$\left\{ \left(\phi_{n}^{k}\right)_{n\in\mathbb{Z}}\right\} _{k=0...\ell-1}$}
is clearly a familly of $\ell$ vectors of \textit{$\mathfrak{S}_{\ell}(\mathbb{Z})$.}
Next, for all pair $(p,q)\in\left\{ 0...\ell-1\right\} ^{2}$ we have\[
\left\langle \phi^{p},\phi^{q}\right\rangle _{\mathfrak{S}_{\ell}}=\frac{1}{\ell}{\displaystyle \sum_{k=0}^{\ell-1}\phi_{k}^{p}\overline{\phi_{k}^{q}}=\frac{1}{\ell}{\displaystyle \sum_{k=0}^{\ell-1}\left(e^{\frac{2i\pi(q-p)}{\ell}}\right)^{k}}=\delta_{p,q}.}\]
So $\left\{ \left(\phi_{n}^{k}\right)_{n\in\mathbb{Z}}\right\} _{k=0...\ell-1}$
is a orthonormal basis of the space vector \textit{$\mathfrak{S}_{\ell}(\mathbb{Z})$.}
\end{proof}

\subsubsection{The main theorem}
\begin{thm}
For all $(p,q)\in\mathbb{Z}\times\mathbb{N}^{*}$ such that $p\wedge q=1$;
there exists a family of $\ell$ complex numbers $\left(\widetilde{b_{k}}(\ell)\right)_{k\in\left\{ 0...\ell-1\right\} }$
where the integer $\ell\in\mathbb{Z}$ is solution of :\[
\forall m\in\mathbb{Z},\;\frac{2p\ell}{q}m+\frac{pl^{2}}{q}\equiv0\;[1];\]
such that for all $t\in\left[0,\left|\ln(h)\right|^{\alpha}\right]$
we get :\[
\mathbf{\widetilde{a_{2}}}\left(t+\frac{p}{q}N_{h}T_{hyp}\right)={\displaystyle \sum_{k=0}^{\ell-1}\widetilde{b_{k}}(\ell)\mathbf{\widetilde{a_{1}}}\left(t+T_{hyp}\left(\frac{k}{\ell}+\frac{p}{q}N_{h}\right)\right)+O\left(\left|\ln(h)\right|^{\alpha+2\gamma-3}\right)}\]
The numbers $\widetilde{b_{k}}(\ell)$ are called fractionnals coefficients;
and for all $k\in\left\{ 0...\ell-1\right\} $\[
\widetilde{b_{k}}(\ell)=e^{-\frac{2i\pi kn_{0}}{\ell}}b_{k}(\ell)\]
where\[
b_{k}(\ell)=b_{k}(h,\ell)=\left\langle \sigma_{h}(p,q),\phi^{k}\right\rangle _{\mathfrak{S}_{\ell}}=\frac{1}{\ell}{\displaystyle \sum_{n=0}^{\ell-1}e^{-2i\pi\frac{p}{q}(n-n_{0})^{2}}e^{-\frac{2i\pi kn}{\ell}}}.\]
Moreover, if we suppose $\frac{T_{rev}}{T_{hyp}}\in\mathbb{Q}$, then
we have \[
\mathbf{\widetilde{a_{2}}}\left(t+\frac{p}{q}T_{rev}\right)={\displaystyle \sum_{k=0}^{\ell-1}\widetilde{b_{k}}(l)\mathbf{\widetilde{a_{1}}}\left(t+T_{hyp}\left(\frac{k}{\ell}+\frac{p}{q}N_{h}\right)\right).}\]
\end{thm}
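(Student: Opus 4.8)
The plan is to start from the closed form of the second order approximation, valid by Definition 6.4 together with $\mathcal{A}_{h}^{\prime}(2\pi n_{0})=1/T_{hyp}$ and $\mathcal{A}_{h}^{\prime\prime}(2\pi n_{0})=1/(\pi T_{rev})$,
\[
\mathbf{\widetilde{a_2}}(t)=\sum_{n\in\mathbb{N}}\left|a_{n}\right|^{2}e^{-2i\pi\frac{t}{T_{hyp}}(n-n_{0})}\,e^{-2i\pi\frac{t}{T_{rev}}(n-n_{0})^{2}},
\]
and to substitute $t\mapsto t+\frac{p}{q}N_{h}T_{hyp}$. Using the defining relation $N_{h}T_{hyp}=T_{rev}-\Theta_{h}T_{hyp}$ of Definition 6.5, the linear factor becomes $e^{-2i\pi(\frac{t}{T_{hyp}}+\frac{p}{q}N_{h})(n-n_{0})}$, while the quadratic factor splits \emph{exactly} as
\[
e^{-2i\pi\frac{t+\frac{p}{q}N_{h}T_{hyp}}{T_{rev}}(n-n_{0})^{2}}=e^{-2i\pi\frac{t}{T_{rev}}(n-n_{0})^{2}}\,\sigma_{h}(p,q)_{n}\,e^{2i\pi\frac{p}{q}\Theta_{h}\frac{T_{hyp}}{T_{rev}}(n-n_{0})^{2}},
\]
where $\sigma_{h}(p,q)_{n}=e^{-2i\pi\frac{p}{q}(n-n_{0})^{2}}$ is the sequence of Notation 6.9. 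So far this is an exact identity: $\mathbf{\widetilde{a_2}}(t+\frac{p}{q}N_{h}T_{hyp})$ equals
\[
\sum_{n\in\mathbb{N}}\left|a_{n}\right|^{2}e^{-2i\pi(\frac{t}{T_{hyp}}+\frac{p}{q}N_{h})(n-n_{0})}\,\sigma_{h}(p,q)_{n}\,e^{-2i\pi\frac{t}{T_{rev}}(n-n_{0})^{2}}\,e^{2i\pi\frac{p}{q}\Theta_{h}\frac{T_{hyp}}{T_{rev}}(n-n_{0})^{2}}.
\]

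The second step is to discard the two ``slow'' quadratic exponentials. Split the sum along $\mathbb{N}=\Delta\sqcup\Gamma$ (Definition 4.13); since $\sigma_{h}(p,q)$ has modulus one, the $\Gamma$-part is bounded by $2\sum_{n\in\Gamma}\left|a_{n}\right|^{2}=O\left|\frac{1}{\ln(h)^{\infty}}\right|$ (Lemma 4.15). On $\Delta$ one has $(n-n_{0})^{2}\leq|\ln h|^{2\gamma}$, so for $t\in[0,|\ln h|^{\alpha}]$, using $T_{rev}\gtrsim|\ln h|^{3}$ (Lemma 5.1, since $\mathcal{A}_{h}^{\prime\prime}(2\pi n_{0})=O(|\ln h|^{-3})$) and $T_{hyp}=O(|\ln h|)$,
\[
e^{-2i\pi\frac{t}{T_{rev}}(n-n_{0})^{2}}=1+O\left(|\ln h|^{\alpha+2\gamma-3}\right),\qquad e^{2i\pi\frac{p}{q}\Theta_{h}\frac{T_{hyp}}{T_{rev}}(n-n_{0})^{2}}=1+O\left(|\ln h|^{2\gamma-2}\right),
\]
uniformly in $n\in\Delta$ and in $t$; since $1<\alpha<3-2\gamma$ (the range fixed in Section 5.5), both errors are $O(|\ln h|^{\alpha+2\gamma-3})$. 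Multiplying by $\left|a_{n}\right|^{2}$, summing (with $\sum_{n}\left|a_{n}\right|^{2}=1+O\left|\frac{1}{\ln(h)^{\infty}}\right|$ by Theorem 4.12), and then restoring the $\Gamma$-tail, one obtains
\[
\mathbf{\widetilde{a_2}}\left(t+\tfrac{p}{q}N_{h}T_{hyp}\right)=\sum_{n\in\mathbb{N}}\left|a_{n}\right|^{2}e^{-2i\pi(\frac{t}{T_{hyp}}+\frac{p}{q}N_{h})(n-n_{0})}\,\sigma_{h}(p,q)_{n}+O\left(|\ln h|^{\alpha+2\gamma-3}\right).
\]

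The third step is to diagonalise $\sigma_{h}(p,q)$. By Proposition 6.10 this sequence is $\ell$-periodic exactly when $\ell$ solves the congruence of the statement, and by Proposition 6.14 the characters $\phi^{k}$, $k=0,\dots,\ell-1$, form an orthonormal basis of $\mathfrak{S}_{\ell}(\mathbb{Z})$; hence $\sigma_{h}(p,q)_{n}=\sum_{k=0}^{\ell-1}b_{k}(\ell)\,e^{-2i\pi kn/\ell}$ with $b_{k}(\ell)=\langle\sigma_{h}(p,q),\phi^{k}\rangle_{\mathfrak{S}_{\ell}}$. Writing $e^{-2i\pi kn/\ell}=e^{-2i\pi kn_{0}/\ell}\,e^{-2i\pi k(n-n_{0})/\ell}$, interchanging the finite sum over $k$ with the sum over $n$, and recalling the definition of $\mathbf{\widetilde{a_1}}$ (Definition 5.3, with $\mathcal{A}_{h}^{\prime}(2\pi n_{0})=1/T_{hyp}$), each inner series equals $\mathbf{\widetilde{a_1}}(t+T_{hyp}(\frac{k}{\ell}+\frac{p}{q}N_{h}))$. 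Setting $\widetilde{b_{k}}(\ell):=e^{-2i\pi kn_{0}/\ell}b_{k}(\ell)$ then gives exactly the claimed formula. For the ``moreover'' clause: if moreover $T_{rev}/T_{hyp}\in\mathbb{Q}$, then $\mathbf{\widetilde{a_1}}$ and $\mathbf{\widetilde{a_2}}$ are genuinely periodic functions, and the same computation carried out at $t+\frac{p}{q}T_{rev}$ goes through with exact equalities in place of the estimates of the second step, so the remainder vanishes.

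The step I expect to be the main obstacle is the second one: one must establish $T_{rev}\gtrsim|\ln h|^{3}$ and $T_{hyp}=O(|\ln h|)$, estimate the two quadratic exponentials \emph{uniformly} over $t\in[0,|\ln h|^{\alpha}]$ and $n\in\Delta$, and verify that the resulting remainder carries exactly the exponent $\alpha+2\gamma-3$ asked for; the reduction to the set $\Delta$ — hence the negligibility of everything outside it — rests on Lemma 4.15 and Theorem 4.12, whereas the final algebraic rearrangement is just the harmonic analysis on $\mathfrak{S}_{\ell}(\mathbb{Z})$ developed immediately before the statement.
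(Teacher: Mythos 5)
Your proof follows essentially the same route as the paper's: substitute $t+\frac{p}{q}N_{h}T_{hyp}$, use $N_{h}T_{hyp}=T_{rev}-\Theta_{h}T_{hyp}$ to extract the $\ell$-periodic sequence $\sigma_{h}(p,q)$, expand it on the characters $\phi^{k}$, and control the residual quadratic phases by splitting over $\Delta\sqcup\Gamma$ with Lemma 4.15 together with the bounds $T_{hyp}=O(|\ln h|)$ and $T_{rev}\gtrsim|\ln h|^{3}$ (the paper merely performs the character expansion before, rather than after, discarding those phases, which is immaterial, and its bound $O(|\ln h|^{2\gamma-2})$ on the $\Delta$-part is absorbed into $O(|\ln h|^{\alpha+2\gamma-3})$ exactly as you argue). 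The only soft spot, shared with the paper's own one-line treatment (``the case $t=0$ \dots is clear''), is the \emph{moreover} clause: for $t\neq0$ the factor $e^{-2i\pi t(n-n_{0})^{2}/T_{rev}}$ does not go away, so the claimed exact equality is really only justified at $t=0$, and your appeal to periodicity does not repair this any more than the paper does.
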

\begin{proof}
Let us denote the integer $\widetilde{n}:=n-n_{0}$; and let us consider
$\ell\in\mathbb{Z}$ a solution of the equation $\mathcal{E}$. So,
we have for all $t\geq0$\[
\mathbf{\widetilde{a_{2}}}\left(t+\frac{p}{q}N_{h}T_{cl}\right)={\displaystyle \sum_{n\in\mathbb{N}}\left|a_{n}\right|^{2}e^{-2i\pi\frac{t}{T_{hyp}}\widetilde{n}}}e^{-2i\pi\frac{t}{T_{rev}}\widetilde{n}^{2}}e^{-2i\pi\frac{p}{q}\widetilde{n}N_{h}}e^{-2i\pi\frac{p}{q}\frac{N_{h}T_{hyp}}{T_{rev}}\widetilde{n}^{2}}.\]
And, since $T_{rev}=N_{h}T_{hyp}+\Theta_{h}T_{hyp}$ we have \[
e^{-2i\pi\frac{p}{q}\frac{N_{h}T_{hyp}}{T_{rev}}\widetilde{n}^{2}}=e^{-2i\pi\frac{p}{q}\widetilde{n}^{2}}e^{2i\pi\frac{p}{q}\frac{\Theta_{h}T_{hyp}}{T_{rev}}\widetilde{n}^{2}}\]
and \[
e^{-2i\pi\frac{p}{q}\widetilde{n}^{2}}=e^{-2i\pi\frac{p}{q}(n-n_{0})^{2}}\in\mathfrak{S}_{\ell}(\mathbb{Z});\]
hence, there exist a unique decomposition of the sequence $\left(e^{-2i\pi\frac{p}{q}(n-n_{0})^{2}}\right)_{n}$
on the basis $\left(\phi^{k}\right)_{k\in\left\{ 0...\ell-1\right\} }$:
\[
e^{-2i\pi\frac{p}{q}\widetilde{n}^{2}}=e^{-2i\pi\frac{p}{q}(n-n_{0})^{2}}={\displaystyle \sum_{k=0}^{\ell-1}\left\langle \sigma_{h}(p,q),\phi^{k}\right\rangle _{\mathfrak{S}_{\ell}}\phi_{n}^{k}}\]
\[
={\displaystyle \sum_{k=0}^{\ell-1}b_{k}(\ell)e^{-2i\pi\frac{k}{\ell}n}.}\]
Thus, \foreignlanguage{english}{for all $t\geq0$}\[
\mathbf{\widetilde{a_{2}}}\left(t+\frac{p}{q}N_{h}T_{hyp}\right)\]
\foreignlanguage{english}{\[
={\displaystyle \sum_{n\in\mathbb{N}}\left|a_{n}\right|^{2}e^{-2i\pi\frac{t}{T_{hyp}}\widetilde{n}}}e^{-2i\pi\frac{t}{T_{rev}}\widetilde{n}^{2}}e^{-2i\pi\frac{p}{q}\widetilde{n}N_{h}}\left({\displaystyle \sum_{k=0}^{\ell-1}b_{k}(\ell)e^{-2i\pi\frac{k}{\ell}n}}\right)e^{2i\pi\frac{p}{q}\frac{\Theta_{h}T_{hyp}}{T_{rev}}\widetilde{n}^{2}}\]
\[
={\displaystyle \sum_{n\in\mathbb{N}}{\displaystyle \sum_{k=0}^{\ell-1}b_{k}(\ell)}\left|a_{n}\right|^{2}e^{-2i\pi\frac{t}{T_{hyp}}\widetilde{n}}}e^{-2i\pi\frac{t}{T_{rev}}\widetilde{n}^{2}}e^{-2i\pi\frac{p}{q}\widetilde{n}N_{h}}e^{-2i\pi\frac{k}{\ell}n}e^{2i\pi\frac{p}{q}\frac{\Theta_{h}T_{hyp}}{T_{rev}}\widetilde{n}^{2}}.\]
For all $t\geq0$ we have \[
\mathbf{\mathbf{\widetilde{a_{1}}}}\left(t+T_{hyp}\left(\frac{k}{\ell}+\frac{p}{q}N_{h}\right)\right)={\displaystyle \sum_{n\in\mathbb{N}}\left|a_{n}\right|^{2}e^{-2i\pi\frac{t}{T_{hyp}}\widetilde{n}}}e^{-2i\pi\frac{k}{\ell}\widetilde{n}}e^{-2i\pi\frac{p}{q}N_{h}\widetilde{n}}\]
thus, }\[
{\displaystyle \sum_{k=0}^{\ell-1}\widetilde{b_{k}}(\ell)\mathbf{\mathbf{\widetilde{a_{1}}}}\left(t+T_{hyp}\left(\frac{k}{\ell}+\frac{p}{q}N_{h}\right)\right)}\]
\[
=\sum_{k=0}^{\ell-1}e^{-\frac{2i\pi kn_{0}}{\ell}}b_{k}(\ell){\displaystyle \sum_{n\in\mathbb{N}}\left|a_{n}\right|^{2}e^{-2i\pi\frac{t}{T_{hyp}}\widetilde{n}}}e^{-2i\pi\frac{k}{\ell}\widetilde{n}}e^{-2i\pi\frac{p}{q}N_{h}\widetilde{n}}\]
\[
={\displaystyle \sum_{n\in\mathbb{N}}\sum_{k=0}^{\ell-1}\left|a_{n}\right|^{2}b_{k}(\ell)e^{-2i\pi\frac{t}{T_{hyp}}\widetilde{n}}}e^{-2i\pi\frac{kn}{\ell}}e^{-2i\pi\frac{p}{q}N_{h}\widetilde{n}}.\]
Hence\[
\left|\mathbf{\widetilde{a_{2}}}\left(t+\frac{p}{q}N_{h}T_{hyp}\right)-{\displaystyle \sum_{k=0}^{\ell-1}\widetilde{b_{k}}(\ell)\mathbf{\widetilde{a_{1}}}\left(t+T_{hyp}\left(\frac{k}{\ell}+\frac{p}{q}N_{h}\right)\right)}\right|\]
\[
=\left|{\displaystyle \sum_{n\in\mathbb{N}}\sum_{k=0}^{\ell-1}\left|a_{n}\right|^{2}b_{k}(\ell)e^{-2i\pi\frac{t}{T_{hyp}}\widetilde{n}}}e^{-2i\pi\frac{p}{q}\widetilde{n}N_{h}}e^{-\frac{2i\pi kn}{\ell}}\left(e^{-2i\pi\frac{t}{T_{rev}}\widetilde{n}^{2}}e^{+2i\pi\frac{p}{q}\frac{\Theta_{h}T_{hyp}}{T_{rev}}\widetilde{n}^{2}}-1\right)\right|;\]
with the sets $\Delta,$ $\Gamma$ and by triangular inequality we
get : \[
\leq\left|\sum_{n\in\Gamma}\sum_{k=0}^{\ell-1}\left|a_{n}\right|^{2}b_{k}(\ell)e^{-2i\pi\frac{t}{T_{hyp}}\widetilde{n}}e^{-2i\pi\frac{p}{q}\widetilde{n}N_{h}}e^{-\frac{2i\pi kn}{\ell}}\left(e^{-2i\pi\frac{t}{T_{rev}}\widetilde{n}^{2}}e^{+2i\pi\frac{p}{q}\frac{\Theta_{h}T_{hyp}}{T_{rev}}\widetilde{n}^{2}}-1\right)\right|\]
\[
+\left|\sum_{n\in\Delta}\sum_{k=0}^{\ell-1}\left|a_{n}\right|^{2}b_{k}(\ell)e^{-2i\pi\frac{t}{T_{hyp}}\widetilde{n}}e^{-2i\pi\frac{p}{q}\widetilde{n}N_{h}}e^{-\frac{2i\pi kn}{\ell}}\left(e^{-2i\pi\frac{t}{T_{rev}}\widetilde{n}^{2}}e^{+2i\pi\frac{p}{q}\frac{\Theta_{h}T_{hyp}}{T_{rev}}\widetilde{n}^{2}}-1\right)\right|.\]
Increase the first serie :\[
\left|\sum_{n\in\Gamma}\sum_{k=0}^{\ell-1}\left|a_{n}\right|^{2}b_{k}(\ell)e^{-2i\pi\frac{t}{T_{hyp}}\widetilde{n}}e^{-2i\pi\frac{p}{q}\widetilde{n}N_{h}}e^{-\frac{2i\pi kn}{\ell}}\left(e^{-2i\pi\frac{t}{T_{rev}}\widetilde{n}^{2}}e^{+2i\pi\frac{p}{q}\frac{\Theta_{h}T_{hyp}}{T_{rev}}\widetilde{n}^{2}}-1\right)\right|\]
\[
\leq2\left(\sum_{n\in\Gamma}\left|a_{n}\right|^{2}\right)\left(\sum_{k=0}^{\ell-1}{\displaystyle \left|b_{k}(\ell)\right|}\right)=O\left|\frac{1}{\ln(h)^{\infty}}\right|.\]
Next, for the second serie, observe the following term :\[
e^{-2i\pi\frac{t}{T_{rev}}\widetilde{n}^{2}}e^{+2i\pi\frac{p}{q}\frac{\Theta_{h}T_{hyp}}{T_{rev}}\widetilde{n}^{2}}=e^{-2i\pi\left(t+\Theta_{h}T_{hyp}\frac{p}{q}\right)\frac{1}{T_{rev}}\widetilde{n}^{2}}.\]
For all $n\in\Delta$ and for all \foreignlanguage{english}{$t\in\left[0,\left|\ln(h)\right|^{\alpha}\right]$}
there exist a constant $C>0$ such that \[
\left|\left(t+\frac{\Theta_{h}T_{hyp\,}p}{q}\right)\frac{(n-n_{0})^{2}}{T_{rev}}\right|\leq C\left(\left|\ln(h)\right|^{2\gamma-3+\alpha}+\left|\ln(h)\right|^{2\gamma-2}\right)\leq2C\left|\ln(h)\right|^{2\gamma-2}.\]
Hence, there exist $A>0$ such that for all \foreignlanguage{english}{$t\in\left[0,\left|\ln(h)\right|^{\alpha}\right]$}\[
\left|{\displaystyle \sum_{n\in\Delta}\sum_{k=0}^{\ell-1}\left|a_{n}\right|^{2}b_{k}(\ell)e^{-2i\pi\frac{t}{T_{hyp}}\widetilde{n}}e^{-2i\pi\frac{p}{q}\widetilde{n}N_{h}}e^{-\frac{2i\pi kn}{\ell}}\left(e^{-2i\pi\frac{t\widetilde{n}{}^{2}}{T_{rev}}}e^{+2i\pi\frac{p}{q}\frac{\Theta_{h}T_{hyp}\widetilde{n}{}^{2}}{T_{rev}}}-1\right)}\right|\]
\[
\leq A\left|\ln(h)\right|^{2\gamma-2}\sum_{n\in\Delta}\left|a_{n}\right|^{2}\leq A\left|\ln(h)\right|^{2\gamma-2}\sum_{n=0}^{+\infty}\left|a_{n}\right|^{2}.\]
For finish, the case where $t=0$ and $\frac{T_{rev}}{T_{hyp}}\in\mathbb{Q}$
is clear.\end{proof}
\begin{cor}
We have the equality :\[
\sum_{k=0}^{\ell-1}\left|b_{k}(\ell)\right|^{2}=1.\]
\end{cor}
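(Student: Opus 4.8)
The plan is to read the identity as the Parseval relation attached to the orthonormal basis $\left(\phi^{k}\right)_{k=0,\dots,\ell-1}$ of the finite-dimensional Hermitian space $\mathfrak{S}_{\ell}(\mathbb{Z})$ constructed in Proposition 6.17. The point is simply that the $b_{k}(\ell)$ are, by their very definition $b_{k}(\ell)=\left\langle \sigma_{h}(p,q),\phi^{k}\right\rangle _{\mathfrak{S}_{\ell}}$, the coordinates of the sequence $\sigma_{h}(p,q)$ in that orthonormal basis.

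First I would check that $\sigma_{h}(p,q)$ actually lies in $\mathfrak{S}_{\ell}(\mathbb{Z})$, so that the expansion makes sense: the integer $\ell$ appearing in Theorem 6.18 is chosen as a solution of the congruence $\forall m\in\mathbb{Z},\ \frac{2p\ell}{q}m+\frac{p\ell^{2}}{q}\equiv0\ (mod\ 1)$, and by Proposition 6.12 this congruence is exactly the condition for the sequence $\left(\sigma_{h}(p,q)\right)_{n}=\left(e^{-2i\pi\frac{p}{q}(n-n_{0})^{2}}\right)_{n}$ to be $\ell$-periodic, i.e. to belong to $\mathfrak{S}_{\ell}(\mathbb{Z})$.

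Then, applying the Parseval/Pythagorean identity in the Hermitian space $\mathfrak{S}_{\ell}(\mathbb{Z})$ together with the explicit form of the inner product given in Proposition 6.16, one gets
\[
\sum_{k=0}^{\ell-1}\left|b_{k}(\ell)\right|^{2}=\left\Vert \sigma_{h}(p,q)\right\Vert _{\mathfrak{S}_{\ell}}^{2}=\frac{1}{|\ell|}\sum_{n=0}^{|\ell|-1}\left|e^{-2i\pi\frac{p}{q}(n-n_{0})^{2}}\right|^{2}=\frac{1}{|\ell|}\sum_{n=0}^{|\ell|-1}1=1,
\]
where the second-to-last equality uses that each $e^{-2i\pi\frac{p}{q}(n-n_{0})^{2}}$ is a complex number of modulus one. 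There is essentially no obstacle in this argument: the only step requiring any attention is the membership $\sigma_{h}(p,q)\in\mathfrak{S}_{\ell}(\mathbb{Z})$, which, as noted, is immediate from the way $\ell$ was selected; everything else is the standard Bessel--Parseval equality for an orthonormal basis.
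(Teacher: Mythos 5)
Your proof is correct and follows exactly the paper's own argument: the $b_{k}(\ell)$ are the coordinates of $\sigma_{h}(p,q)$ in the orthonormal basis $\left(\phi^{k}\right)_{k}$, so the Pythagorean (Parseval) identity gives $\sum_{k}\left|b_{k}(\ell)\right|^{2}=\left\Vert \sigma_{h}(p,q)\right\Vert _{\mathfrak{S}_{\ell}}^{2}=1$. You are in fact slightly more careful than the paper, which asserts $\left\Vert \sigma_{h}(p,q)\right\Vert _{\mathfrak{S}_{\ell}}^{2}=1$ without writing out the modulus-one computation or the $\ell$-periodicity check.
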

\begin{proof}
Since $b_{k}(\ell)=\left\langle \sigma_{h}(p,q),\phi^{k}\right\rangle _{\mathfrak{S}_{\ell}}$
and $\left(\phi^{k}\right)_{k\in\left\{ 0...\ell-1\right\} }$ is
a orthomoral basis of the space vector $\mathfrak{S}_{\ell}(\mathbb{Z})$,
by Pythagorean equality we get\[
\sum_{k=0}^{\ell-1}\left|b_{k}(\ell)\right|^{2}=\left\Vert \sigma_{h}(p,q)\right\Vert _{\mathfrak{S}_{\ell}}^{2}=1.\]

\end{proof}
Now, let us examine the case $\frac{p}{q}=1$ and the case $\frac{p}{q}=\frac{1}{2}$. 
\begin{cor}
With the same notation as in the theorem 6.15, we have 

\textbf{(i)} for all $t\in\left[0,\left|\ln(h)\right|^{\alpha}\right]$\[
\mathbf{\widetilde{a_{2}}}(t+N_{h}T_{hyp})=\mathbf{\widetilde{a_{1}}}(t)+O\left(\left|\ln(h)\right|^{\alpha+2\gamma-3}\right);\]
\textbf{(ii)} for all $t\in\left[0,\left|\ln(h)\right|^{\alpha}\right]$\[
\mathbf{\widetilde{a_{2}}}\left(t+\frac{N_{h}T_{hyp}}{2}\right)=\mathbf{\mathbf{\widetilde{a_{1}}}}\left(t+\left(\frac{N_{h}+1}{2}\right)T_{hyp}\right)+O\left(\left|\ln(h)\right|^{\alpha+2\gamma-3}\right);\]
in particular, if $N_{h}$ is odd, then :\[
\mathbf{\mathbf{\widetilde{a_{2}}}}\left(t+\frac{N_{h}T_{hyp}}{2}\right)=\mathbf{\mathbf{\mathbf{\widetilde{a_{1}}}}}(t)+O\left(\left|\ln(h)\right|^{\alpha+2\gamma-3}\right)\]
and if $N_{h}$ is even, then :\[
\mathbf{\mathbf{\widetilde{a_{2}}}}\left(t+\frac{N_{h}T_{hyp}}{2}\right)=\mathbf{\widetilde{a_{1}}}\left(t+\frac{T_{hyp}}{2}\right)+O\left(\left|\ln(h)\right|^{\alpha+2\gamma-3}\right).\]
\end{cor}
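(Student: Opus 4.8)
The plan is to derive both statements as immediate specializations of Theorem 6.15, applied with $(p,q)=(1,1)$ for part (i) and with $(p,q)=(1,2)$ for part (ii), together with one elementary observation used throughout: the function $\mathbf{\widetilde{a_{1}}}$ is $T_{hyp}$-periodic. Indeed, from the definition of $\mathbf{\widetilde{a_{1}}}$ and the relation $T_{hyp}=1/\mathcal{A}_{h}^{\prime}(2\pi n_{0})$ one has $\mathbf{\widetilde{a_{1}}}(t)=\sum_{n}\left|a_{n}\right|^{2}e^{-2i\pi(t/T_{hyp})(n-n_{0})}$, which, since $n-n_{0}\in\mathbb{Z}$, is unchanged under $t\mapsto t+T_{hyp}$.

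For part (i) I would take $p=q=1$: then $p\wedge q=1$ and, $q$ being odd, Proposition 6.14(i) gives $\mathcal{E}=q\mathbb{Z}$, so the period $\ell=1$ is admissible. The sequence $\sigma_{h}(1,1)=\left(e^{-2i\pi(n-n_{0})^{2}}\right)_{n}$ is identically $1$ because $(n-n_{0})^{2}\in\mathbb{Z}$; in the one-dimensional space $\mathfrak{S}_{1}(\mathbb{Z})$, with $\phi^{0}\equiv1$, this forces $b_{0}(1)=1$, hence $\widetilde{b_{0}}(1)=e^{0}b_{0}(1)=1$. Theorem 6.15 then yields, for $t\in\left[0,\left|\ln(h)\right|^{\alpha}\right]$,
\[
\mathbf{\widetilde{a_{2}}}(t+N_{h}T_{hyp})=\mathbf{\widetilde{a_{1}}}(t+N_{h}T_{hyp})+O\left(\left|\ln(h)\right|^{\alpha+2\gamma-3}\right),
\]
and the $T_{hyp}$-periodicity of $\mathbf{\widetilde{a_{1}}}$ (with $N_{h}\in\mathbb{N}$) replaces the leading term by $\mathbf{\widetilde{a_{1}}}(t)$.

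For part (ii) I would take $p=1$, $q=2$: then $p\wedge q=1$, and since $q$ is even with $q/2=1$ odd, Proposition 6.14(ii) gives $\mathcal{E}=q\mathbb{Z}=2\mathbb{Z}$, so $\ell=2$ is admissible. The key simplification is the congruence $(n-n_{0})^{2}\equiv n-n_{0}\ (\mathrm{mod}\ 2)$, which gives $\sigma_{h}(1,2)_{n}=e^{-i\pi(n-n_{0})^{2}}=(-1)^{n-n_{0}}=(-1)^{n_{0}}\phi_{n}^{1}$ in $\mathfrak{S}_{2}(\mathbb{Z})$, where $\phi^{0}\equiv1$ and $\phi_{n}^{1}=(-1)^{n}$. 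Hence $b_{0}(2)=0$ and $b_{1}(2)=(-1)^{n_{0}}$, so $\widetilde{b_{0}}(2)=0$ while $\widetilde{b_{1}}(2)=e^{-i\pi n_{0}}b_{1}(2)=(-1)^{n_{0}}(-1)^{n_{0}}=1$; the twist $e^{-2i\pi kn_{0}/\ell}$ in the definition of $\widetilde{b_{k}}$ exactly cancels the sign in $b_{1}(2)$. Feeding this into Theorem 6.15 kills the $k=0$ contribution and leaves, for $t\in\left[0,\left|\ln(h)\right|^{\alpha}\right]$,
\[
\mathbf{\widetilde{a_{2}}}\left(t+\frac{N_{h}T_{hyp}}{2}\right)=\mathbf{\widetilde{a_{1}}}\left(t+T_{hyp}\left(\frac{1}{2}+\frac{N_{h}}{2}\right)\right)+O\left(\left|\ln(h)\right|^{\alpha+2\gamma-3}\right)=\mathbf{\widetilde{a_{1}}}\left(t+\frac{N_{h}+1}{2}T_{hyp}\right)+O\left(\left|\ln(h)\right|^{\alpha+2\gamma-3}\right),
\]
which is the first assertion of (ii); the two sub-cases then follow from $T_{hyp}$-periodicity, since $\frac{N_{h}+1}{2}\in\mathbb{Z}$ when $N_{h}$ is odd, and $\frac{N_{h}+1}{2}=\frac{N_{h}}{2}+\frac{1}{2}$ with $\frac{N_{h}}{2}\in\mathbb{Z}$ when $N_{h}$ is even.

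I do not expect a genuine obstacle: the corollary is essentially bookkeeping on top of Theorem 6.15. The points needing care are (a) selecting the correct admissible period $\ell$ from the parity dichotomy of Proposition 6.14 — in particular not taking $\ell=q$ when $q\equiv0\ (\mathrm{mod}\ 4)$, though that case does not arise for $q\in\{1,2\}$; (b) the congruence $(n-n_{0})^{2}\equiv n-n_{0}\ (\mathrm{mod}\ 2)$ collapsing the quadratic Gauss sum for $b_{k}(2)$ to a trivial one; and (c) tracking the phase $e^{-2i\pi kn_{0}/\ell}$ relating $\widetilde{b_{k}}$ to $b_{k}$. A final harmless subtlety is that the argument shift $T_{hyp}\!\left(\frac{k}{\ell}+\frac{p}{q}N_{h}\right)$ is of order $\left|\ln(h)\right|^{3}$, but since $\mathbf{\widetilde{a_{1}}}$ is exactly $T_{hyp}$-periodic only its bounded remainder modulo $T_{hyp}$ matters, and the time variable stays in the range $\left[0,\left|\ln(h)\right|^{\alpha}\right]$ where Theorem 6.15 applies.
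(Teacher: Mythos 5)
Your proposal is correct and follows essentially the same route as the paper: both specialize Theorem 6.15 to $(p,q)=(1,1)$ and $(1,2)$, compute $b_{0}(1)=1$, $b_{0}(2)=0$, $b_{1}(2)=(-1)^{n_{0}}$ hence $\widetilde{b_{1}}(2)=1$, and then use the exact $T_{hyp}$-periodicity of $\mathbf{\widetilde{a_{1}}}$ to reduce the argument shifts. Your additional checks (admissibility of $\ell$ via the characterization of $\mathcal{E}$, and the congruence $(n-n_{0})^{2}\equiv n-n_{0}\ (\mathrm{mod}\ 2)$ collapsing the Gauss sum) are consistent with, and slightly more explicit than, the paper's computation.
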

\begin{proof}
In the case (i) : $p=1,\, q=\ell=1;$ thus by the previous theorem
we get, for all \textit{$t\in\left[0,\left|\ln(h)\right|{}^{\alpha}\right]$}
:\[
\mathbf{\mathbf{\widetilde{a_{2}}}}\left(t+\frac{p}{q}N_{h}T_{hyp}\right)={\displaystyle \widetilde{b_{0}}(1)\mathbf{\widetilde{a_{1}}}\left(t+T_{hyp}N_{h}\right)+O\left(\left|\ln(h)\right|^{\alpha+2\gamma-3}\right)}\]
\[
={\displaystyle \widetilde{b_{0}}(1)\mathbf{\mathbf{\widetilde{a_{1}}}}(t)+O\left(\left|\ln(h)\right|^{\alpha+2\gamma-3}\right)}\]
and $b_{0}(1)=\frac{1}{1}e^{-2i\pi n_{0}^{2}}=1$, thus\[
\widetilde{b_{0}}(1)=1.\]
Next, for the case (ii) : $p=1,\, q=\ell=2$ , then \[
\mathbf{\widetilde{a_{2}}}\left(t+\frac{p}{q}N_{h}T_{hyp}\right)=\]
\[
{\displaystyle \widetilde{b_{0}}(2)\mathbf{\mathbf{\widetilde{a_{1}}}}\left(t+\frac{T_{hyp}N_{h}}{2}\right)+\widetilde{b_{1}}(2)\mathbf{\mathbf{\widetilde{a_{1}}}}\left(t+T_{hyp}\left(\frac{1}{2}+\frac{N_{h}}{2}\right)\right)+O\left(\left|\ln(h)\right|^{\alpha+2\gamma-3}\right).}\]
We have\[
b_{0}(2)=\frac{1}{2}e^{-2i\pi\frac{1}{2}n_{0}^{2}}+\frac{1}{2}e^{-2i\pi\frac{1}{2}(1-n_{0})^{2}}=0\]
thus \[
\widetilde{b_{0}}(2)=0.\]
 And\[
b_{1}(2)=\frac{1}{2}e^{-2i\pi\frac{1}{2}n_{0}^{2}}e^{\frac{2i\pi0}{2}}+\frac{1}{2}e^{-2i\pi\frac{1}{2}(1-n_{0})^{2}}e^{\frac{2i\pi}{2}}\]
\[
=\frac{1}{2}\left(e^{-i\pi n_{0}{}^{2}}+(-1)e^{-i\pi(n_{0}-1)^{2}}\right)=(-1)^{n_{0}}\]
then \[
\widetilde{b_{1}}(2)=e^{-i\pi n_{0}}(-1)^{n_{0}}=1.\]

\end{proof}

\subsection{Explicit values of modulus for revivals coefficients}

About the coefficients $\widetilde{b_{k}}(\ell)$ we just know that
:\[
{\displaystyle \sum_{k=0}^{\ell-1}\left|\widetilde{b_{k}}(\ell)\right|^{2}}=1.\]
But we can say more. We can calculate $\left|\widetilde{b_{k}}(\ell)\right|=\left|b_{k}(\ell)\right|$
for all $k$. From the proposition 6.11 we consider two cases : the
case $\ell=q$ and the case $\ell=\frac{q}{2}$.

\subsubsection{Case $\ell=q$}

If the integer $q$ is odd we have :
\begin{thm}
\textbf{{[}Lab2{]}.} For all integers $p$ and $q$, such that $p\wedge q=1$
and $q$ odd, then for all $k\in\left\{ 0...q-1\right\} $ we get
:\[
\left|b_{k}(q)\right|^{2}=\frac{1}{q}.\]

\end{thm}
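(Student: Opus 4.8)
The plan is to reduce the statement to a homogeneity (transitivity) property of the coordinates $b_k(q)=\langle\sigma_h(p,q),\phi^k\rangle_{\mathfrak{S}_q}$ under shifts, and then invoke Corollary 6.16. Since $q$ is odd, Proposition 6.11 gives period $\ell=q$, so $\sigma:=\sigma_h(p,q)\in\mathfrak{S}_q(\mathbb{Z})$ and, by Proposition 6.14, the numbers $b_0(q),\dots,b_{q-1}(q)$ are exactly the components of $\sigma$ in the orthonormal basis $\{\phi^k\}_{k=0,\dots,q-1}$; Corollary 6.16 already tells us $\sum_{k=0}^{q-1}|b_k(q)|^2=1$. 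Hence it suffices to show that all the moduli $|b_0(q)|,\dots,|b_{q-1}(q)|$ coincide: dividing $1$ by $q$ then yields $|b_k(q)|^2=1/q$.

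To compare the moduli I would introduce, for $j\in\mathbb{Z}$, the shift operator $T_j$ on $\mathfrak{S}_q(\mathbb{Z})$ given by $(T_ju)_n:=u_{n+j}$. By $q$-periodicity it is well defined and, for the Hermitian product $\langle\,,\,\rangle_{\mathfrak{S}_q}$, unitary; moreover every basis vector is an eigenvector, $T_j\phi^m=e^{-2i\pi mj/q}\phi^m$ with a unit-modulus eigenvalue. On the other hand, completing the square via $(n+j-n_0)^2=(n-n_0)^2+2j(n-n_0)+j^2$ gives
\[
(T_j\sigma)_n=e^{-2i\pi\frac{p}{q}\left(2j(n-n_0)+j^2\right)}\sigma_n=c_j\,e^{-2i\pi\frac{2pj}{q}n}\,\sigma_n=c_j\,\phi^{2pj}_n\,\sigma_n,
\]
where $c_j:=e^{-2i\pi\frac{p}{q}(j^2-2jn_0)}$ has modulus $1$ and the upper index of $\phi$ is read modulo $q$.

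Comparing the two descriptions of $T_j\sigma$: on one side, $|\langle T_j\sigma,\phi^m\rangle_{\mathfrak{S}_q}|=|b_m(q)|$ because $T_j$ is unitary and $\phi^m$ an eigenvector with unit eigenvalue; on the other side, $\langle c_j\phi^{2pj}\sigma,\phi^m\rangle_{\mathfrak{S}_q}=c_j\langle\sigma,\phi^{\,m-2pj}\rangle_{\mathfrak{S}_q}$, of modulus $|b_{\,m-2pj\bmod q}(q)|$. Thus $|b_m(q)|=|b_{\,m-2pj\bmod q}(q)|$ for all $m$ and all $j$. Now the hypotheses enter: $q$ odd and $p\wedge q=1$ make $2p$ invertible modulo $q$, so $j\mapsto 2pj\bmod q$ is a bijection of $\mathbb{Z}/q\mathbb{Z}$, and for any $k,k'\in\{0,\dots,q-1\}$ a suitable choice of $j$ gives $|b_k(q)|=|b_{k'}(q)|$. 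All the moduli being equal with squares summing to $1$, we get $|b_k(q)|^2=1/q$.

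The whole argument is bookkeeping with roots of unity; the one delicate point to watch is the index and phase arithmetic modulo $q$ in the identity $\langle c_j\phi^{2pj}\sigma,\phi^m\rangle=c_j\langle\sigma,\phi^{m-2pj}\rangle$, and the place where oddness of $q$ is genuinely used is precisely the invertibility of $2$ mod $q$ (for $q$ even the shift orbit would only be half of the index set, consistent with Proposition 6.11(iii) and the separate treatment of $\ell=q/2$). An alternative, more computational route would expand $(n-n_0)^2$, write $q\,b_k(q)$ as a Gauss sum $\sum_{n\bmod q}e^{2i\pi(an^2+bn)/q}$ with $\gcd(a,q)=1$, complete the square to reduce it to $\sum_{n\bmod q}e^{2i\pi an^2/q}$, and use the classical evaluation $\bigl|\sum_{n\bmod q}e^{2i\pi an^2/q}\bigr|=\sqrt q$ for $q$ odd, giving $|b_k(q)|=\sqrt q/q$ at once.
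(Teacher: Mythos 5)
Your proof is correct. Note first that the paper itself gives no argument for this statement: Theorem 6.18 is quoted from the thesis \textbf{[Lab2]} without proof, so there is no in-paper argument to compare against; your proposal supplies a complete, self-contained justification.

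The argument checks out in detail. The shift $T_j$ is indeed unitary on $\mathfrak{S}_q(\mathbb{Z})$ (the averaged sum over a full period is shift-invariant), and $T_j\phi^m=e^{-2i\pi mj/q}\phi^m$, so $\left|\left\langle T_j\sigma,\phi^m\right\rangle_{\mathfrak{S}_q}\right|=\left|b_m(q)\right|$. The completion of the square gives
\[
(T_j\sigma)_n=c_j\,e^{-2i\pi\frac{2pj}{q}n}\,\sigma_n,\qquad |c_j|=1,
\]
and since $e^{-2i\pi\frac{2pj}{q}n}\,\overline{\phi^m_n}=\overline{\phi^{\,m-2pj}_n}$, the second evaluation $\left\langle T_j\sigma,\phi^m\right\rangle_{\mathfrak{S}_q}=c_j\,b_{\,m-2pj\bmod q}(q)$ is also right. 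Oddness of $q$ together with $p\wedge q=1$ makes $2p$ invertible modulo $q$, so the orbit of any index under $m\mapsto m-2pj$ is all of $\mathbb{Z}/q\mathbb{Z}$; all moduli coincide and Corollary 6.16 (which is just $\|\sigma_h(p,q)\|_{\mathfrak{S}_q}^2=1$, as $|\sigma_n|=1$) forces $|b_k(q)|^2=1/q$. Your closing remark correctly locates where the parity hypothesis enters: for $q$ even the shift only connects indices of equal parity, which is exactly the dichotomy appearing in Theorem 6.19. The alternative route via the classical evaluation $\bigl|\sum_{n\bmod q}e^{2i\pi an^2/q}\bigr|=\sqrt{q}$ for $q$ odd and $\gcd(a,q)=1$ would work equally well and is the more traditional computation; the shift-orbit argument has the advantage of needing no Gauss-sum evaluation, only the Pythagorean identity already established in the paper.
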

And in the case $q$ even :
\begin{thm}
\textbf{{[}Lab2{]}}. For all integers $p$ and $q$, such that $p\wedge q=1$
and $q$ even, then for all $k\in\left\{ 0...q-1\right\} $ we get
:\[
If\:\frac{q}{2}\, is\, even,\, then:\;\left|b_{k}(q)\right|^{2}=\left\{ \begin{array}{cc}
\frac{2}{q}\;\; if\; k\; is\; even\\
\\0\;\; else.\end{array}\right.\]
\[
If\:\frac{q}{2}\, is\, odd,\, then:\;\left|b_{k}(q)\right|^{2}=\left\{ \begin{array}{cc}
0\;\; if\; k\; is\; even\\
\\\frac{2}{q}\; else.\end{array}\right.\]

\end{thm}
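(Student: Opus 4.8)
The plan is to compute $\left|b_{k}(q)\right|^{2}$ (which equals $\left|\widetilde{b_{k}}(q)\right|^{2}$) head-on, treating it as a generalised quadratic Gauss sum and using only the orthogonality of the additive characters of $\mathbb{Z}/q\mathbb{Z}$. First I would note that, since $p(n+q-n_{0})^{2}\equiv p(n-n_{0})^{2}$ modulo $q$ (here $n_{0}\in\mathbb{Z}$), the summand $e^{-2i\pi\frac{p}{q}(n-n_{0})^{2}}e^{-\frac{2i\pi kn}{q}}$ defining $b_{k}(q)$ is $q$-periodic in $n$, so the summation index --- and all the manipulations below --- may be taken over $\mathbb{Z}/q\mathbb{Z}$.

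Then I would expand the modulus square,
\[
\left|b_{k}(q)\right|^{2}=\frac{1}{q^{2}}\sum_{n,m\in\mathbb{Z}/q\mathbb{Z}}e^{-\frac{2i\pi}{q}\left(p(n-n_{0})^{2}-p(m-n_{0})^{2}+k(n-m)\right)},
\]
and perform the change of variables $n=m+j$, a bijection of $(\mathbb{Z}/q\mathbb{Z})^{2}$. The identity $(m+j-n_{0})^{2}-(m-n_{0})^{2}=j^{2}+2j(m-n_{0})$ makes the exponent affine in $m$, so the $m$-sum becomes a geometric sum and the expression factors:
\[
\left|b_{k}(q)\right|^{2}=\frac{1}{q^{2}}\sum_{j\in\mathbb{Z}/q\mathbb{Z}}e^{-\frac{2i\pi}{q}\left(pj^{2}-2pjn_{0}+kj\right)}\sum_{m\in\mathbb{Z}/q\mathbb{Z}}e^{-\frac{2i\pi}{q}\,2pjm}.
\]
The inner sum equals $q$ when $q\mid 2pj$ and $0$ otherwise; since $p\wedge q=1$ and $q$ is even, this occurs exactly for $j\in\left\{0,\tfrac{q}{2}\right\}$ in $\mathbb{Z}/q\mathbb{Z}$.

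Keeping only these two terms gives $\left|b_{k}(q)\right|^{2}=\frac{1}{q}\bigl(1+e^{-\frac{2i\pi}{q}\cdot\frac{q}{2}\left(p\frac{q}{2}-2pn_{0}+k\right)}\bigr)$. It then remains to simplify the phase: $e^{2i\pi pn_{0}}=1$, and $p$ is odd (forced by $p\wedge q=1$ with $q$ even), so $e^{-i\pi p\frac{q}{2}}=e^{-i\pi\frac{q}{2}}$, and the phase collapses to $e^{-i\pi(q/2+k)}=(-1)^{q/2}(-1)^{k}$. Hence $\left|b_{k}(q)\right|^{2}=\frac{1}{q}\bigl(1+(-1)^{q/2}(-1)^{k}\bigr)$, and distinguishing the parity of $q/2$ yields the two stated formulas; as a consistency check, $\sum_{k=0}^{q-1}\left|b_{k}(q)\right|^{2}=1$, matching Corollary 6.16.

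I do not anticipate a genuine obstacle: the whole proof is the Gauss-sum reduction above. The only points demanding care are purely bookkeeping --- verifying that the change of variables $n=m+j$ is legitimate (every factor being honestly $q$-periodic in both $m$ and $j$) and correctly isolating that only $j=0$ and $j=q/2$ contribute. One could instead cite the classical closed form for quadratic Gauss sums with a linear term, but the direct argument is cleaner here because the surviving part of the sum is trivial.
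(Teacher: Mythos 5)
Your proof is correct. The paper itself gives no argument for this statement --- it is simply quoted from the thesis \textbf{[Lab2]} --- so there is nothing to compare against line by line; your direct computation is a complete, self-contained derivation. The key steps all check out: the summand is genuinely $q$-periodic in $n$ (since $p(n+q-n_{0})^{2}\equiv p(n-n_{0})^{2}\pmod q$), the substitution $n=m+j$ is a bijection of $(\mathbb{Z}/q\mathbb{Z})^{2}$ with every factor well defined modulo $q$, the inner geometric sum over $m$ isolates exactly $j\in\{0,q/2\}$ because $\gcd(p,q)=1$ and $q$ is even, and the surviving phase at $j=q/2$ collapses to $(-1)^{q/2}(-1)^{k}$ using $e^{2i\pi pn_{0}}=1$ and the fact that $p$ must be odd. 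The resulting formula $\left|b_{k}(q)\right|^{2}=\frac{1}{q}\bigl(1+(-1)^{q/2+k}\bigr)$ reproduces both cases of the theorem and is consistent with $\sum_{k}\left|b_{k}(q)\right|^{2}=1$. The same computation with $q$ odd (where only $j=0$ survives) would also recover Theorem 6.18, so your method handles the whole family of revival coefficients uniformly.
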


\subsubsection{Case $\ell=\frac{q}{2}$}

It is the case $q\in4\mathbb{Z}$. 
\begin{thm}
\textbf{{[}Lab2{]}}. For all integers $p$ and $q$, such that $p\wedge q=1$
and $q\in4\mathbb{Z}^{*}$; for all $k\in\left\{ 0...\frac{q}{2}-1\right\} $
we have \[
\left|b_{k}\left(\frac{q}{2}\right)\right|^{2}=\frac{2}{q}.\]

\end{thm}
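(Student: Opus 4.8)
The plan is to evaluate $\left|b_{k}(q/2)\right|^{2}$ directly by writing it as a double sum and recognising the (normalised) autocorrelation of the sequence $\sigma_{h}(p,q)$. Throughout set $\ell:=q/2$. Since $q\in 4\mathbb{Z}^{*}$ we may write $q=4q'$ with $q'\in\mathbb{Z}^{*}$, so $\ell=2q'$; and because $p\wedge q=1$ we get $p\wedge\ell=1$. Recall from theorem 6.15 that
\[
b_{k}(\ell)=\frac{1}{\ell}\sum_{n=0}^{\ell-1}e^{-2i\pi\frac{p}{q}(n-n_{0})^{2}}e^{-\frac{2i\pi kn}{\ell}}=\left\langle \sigma_{h}(p,q),\phi^{k}\right\rangle _{\mathfrak{S}_{\ell}},
\]
and that, by proposition 6.11 (iii), the sequence $\sigma_{h}(p,q)=\left(e^{-2i\pi\frac{p}{q}(n-n_{0})^{2}}\right)_{n}$ indeed belongs to $\mathfrak{S}_{\ell}(\mathbb{Z})$, so this expression makes sense.

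First I would expand the square:
\[
\left|b_{k}(\ell)\right|^{2}=\frac{1}{\ell^{2}}\sum_{n,m=0}^{\ell-1}e^{-2i\pi\frac{p}{q}\left[(n-n_{0})^{2}-(m-n_{0})^{2}\right]}e^{-\frac{2i\pi k(n-m)}{\ell}},
\]
and reorganise the double sum by the variable $j\equiv n-m \pmod{\ell}$. Using the $\ell$-periodicity of every factor this gives $\left|b_{k}(\ell)\right|^{2}=\frac{1}{\ell}\sum_{j=0}^{\ell-1}c_{j}\,e^{-\frac{2i\pi kj}{\ell}}$, where $c_{j}:=\frac{1}{\ell}\sum_{m=0}^{\ell-1}\left(\sigma_{h}(p,q)\right)_{m+j}\overline{\left(\sigma_{h}(p,q)\right)_{m}}$. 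It therefore suffices to show that $c_{j}=0$ for $1\le j\le\ell-1$: this will at once force $\left|b_{k}(\ell)\right|^{2}=c_{0}/\ell$ to be independent of $k$, and then either a one-line computation of $c_{0}$ or an appeal to corollary 6.16 (which gives $\sum_{k=0}^{\ell-1}\left|b_{k}(\ell)\right|^{2}=1$) yields the value $1/\ell=2/q$.

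To evaluate $c_{j}$, expand $(m+j-n_{0})^{2}-(m-n_{0})^{2}=2j(m-n_{0})+j^{2}$, so that
\[
c_{j}=\frac{1}{\ell}\,e^{-2i\pi\frac{p}{q}j^{2}}e^{2i\pi\frac{2pj}{q}n_{0}}\sum_{m=0}^{\ell-1}e^{-2i\pi\frac{2pj}{q}m}.
\]
Here is the one place where case (iii) really enters: since $q=2\ell$ we have $\frac{2pj}{q}=\frac{pj}{\ell}$, so the remaining sum is the geometric sum $\sum_{m=0}^{\ell-1}e^{-2i\pi\frac{pj}{\ell}m}$, which equals $\ell$ if $\ell\mid pj$ and $0$ otherwise. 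Because $p\wedge\ell=1$, the condition $\ell\mid pj$ is equivalent to $\ell\mid j$, which for $0\le j\le\ell-1$ holds only for $j=0$; and $c_{0}=\frac1\ell\sum_{m}1=1$. Hence $c_{j}=0$ for $1\le j\le\ell-1$ and $\left|b_{k}(q/2)\right|^{2}=c_{0}/\ell=2/q$ for every $k$, as claimed.

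The main obstacle is essentially the bookkeeping in the second step — the re-indexation $j=n-m$ and the identification with an autocorrelation — together with remembering to invoke proposition 6.11 (iii) (to know $\ell=q/2$ is a period) and $p\wedge q=1$ (to get $p\wedge\ell=1$) at the right moments; once the sum is cast in autocorrelation form, the remaining computation is a routine geometric-sum evaluation and no general quadratic Gauss-sum identity is needed.
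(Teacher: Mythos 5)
Your argument is correct. Note first that the paper itself gives no proof of this statement: Theorems 6.18--6.20 are quoted from the thesis \textbf{[Lab2]}, where the usual route is an explicit evaluation of the quadratic Gauss sums $b_{k}(\ell)=\frac{1}{\ell}\sum_{n}e^{-2i\pi\frac{p}{q}(n-n_{0})^{2}}e^{-2i\pi kn/\ell}$. Your autocorrelation argument is a genuinely different and more elementary route: writing $\left|b_{k}(\ell)\right|^{2}=\frac{1}{\ell}\sum_{j}c_{j}e^{-2i\pi kj/\ell}$ with $c_{j}$ the normalised lag-$j$ autocorrelation of $\sigma_{h}(p,q)$, and showing $c_{j}=\delta_{j,0}$, immediately forces all the moduli to be equal without ever computing a Gauss sum. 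Each step checks out: Proposition 6.11 (iii) does give that $\ell=q/2$ is a period when $q\in4\mathbb{Z}^{*}$ (so the re-indexation $j=n-m$ modulo $\ell$ is legitimate); the identity $(m+j-n_{0})^{2}-(m-n_{0})^{2}=2j(m-n_{0})+j^{2}$ reduces $c_{j}$ to the geometric sum $\sum_{m=0}^{\ell-1}e^{-2i\pi pjm/\ell}$ precisely because $q=2\ell$; and $p\wedge q=1$ with $\ell\mid q$ gives $p\wedge\ell=1$, so that sum vanishes for $1\leq j\leq\ell-1$. The stray phase $e^{2i\pi\frac{2pjn_{0}}{q}}$ is harmless since it multiplies a quantity that is zero for $j\neq0$. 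One instructive remark your approach makes transparent: this vanishing of all nonzero-lag autocorrelations is special to the case $\ell=q/2$ with $q\in4\mathbb{Z}$; in the settings of Theorems 6.18 and 6.19 the autocorrelation does not vanish identically off $j=0$, which is exactly why half the coefficients can vanish there while here all $\frac{q}{2}$ of them share the common value $2/q$.
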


\subsection{Comparison between time scale approximation, hyperbolic period and
revivals periods}

Since $\gamma<1$ we have $1<3-2\gamma,$ thus there exist $\alpha$
such that $\alpha\in\left]1,3-2\gamma\right[$; hence : \[
\left|\ln(h)\right|<\left|\ln(h)\right|^{\alpha}<\left|\ln(h)\right|^{3-2\gamma}.\]
So we can choose $\alpha$ such that : for $h$ small enough :\[
\left[0,T_{hyp}\right]\subset\left[0,\left|\ln(h)\right|^{\alpha}\right].\]
Next, with $\gamma<\frac{1}{3}$ we get $3<4-3\gamma$ and there exist
$\beta$ such that $\beta\in\left]1,4-3\gamma\right[$ hence : \[
\left|\ln(h)\right|^{3}<\left|\ln(h)\right|^{\beta}<\left|\ln(h)\right|^{4-3\gamma};\]
so, for $h$ small enough we have :\[
\left[0,T_{rev}\right]\subset\left[0,\left|\ln(h)\right|^{\beta}\right].\]

\selectlanguage{english}%

\vspace{1.5cm}\textbf{\large Olivier Labl\'ee}{\large \par}

\hspace{-0.5cm}

\hspace{-0.5cm}Universit\'{e} Grenoble 1-CNRS

\hspace{-0.5cm}Institut Fourier

\hspace{-0.5cm}UFR de Math\'ematiques

\hspace{-0.5cm}UMR 5582

\hspace{-0.5cm}BP 74 38402 Saint Martin d'H\`eres 

\hspace{-0.5cm}mail: \textcolor{blue}{lablee@ujf-grenoble.fr}

\hspace{-0.5cm}http://www-fourier.ujf-grenoble.fr/\textasciitilde{}lablee/

\begin{thebibliography}{VuN}
\bibitem[Ab-St]{key-124}\foreignlanguage{english}{M. A\textsc{BRAMOWITZ
}\& I.A STEGUN, \textit{Handbook of Mathematical functions with formulas,
graphs and mathematical tables}. New York: Dover, pp. 255-258 and
260-263, 1972.}

\selectlanguage{english}%
\bibitem[Av-Pe]{key-8-1}I. Sh. AVERBUKH \& N. F PERELMAN, \textit{Fractional
revivals : universality in the long-term evolution of quantum wave
packets beyond the correspondance principle dynamics.} Physics Letters
A, 139, p. 449-453, 1989.

\bibitem[BKP]{key-45}R. BLUHM, V. A. KOSTELECKY \& J. A. PORTER,
\textit{The evolution and revival structure of localized quantum wave
packets}. American Journal of Physics 64, 944, 1996. 

\bibitem[Bl-Ko]{key-46}R. BLUHM \& V. A. KOSTELECKY , \textit{Long
term evolution and revival structure of Rydberg wave packets for hydrogen
and alkali-metal} \textit{atoms}. Phys. Rev A51, 4767-4786, 1995. 

\bibitem[Ch-VuN]{key-25-1}L. CHARLES \& S. V\~U NG\d OC, \textit{Spectral
asymptotics via the semi-classical birkhoff normal form}, math.SP.0605096.

\bibitem[Co-Pa1]{key-64}Y. COLIN DE VERDI\`ERE \& B. PARISSE, \textit{Equilibre
instable en r\'egime semi-classique I: Concentration microlocale,} Comm
PDE, 19 : 1535-1564, 1994.

\bibitem[Co-Pa2]{key-58}Y. COLIN DE VERDI\`ERE \& B. PARISSE, \textit{Equilibre
instable en r\'egime semi-classique II: Conditions de Bohr-Sommerferld,}
Ann IHP, 61 : 347-367, 1994.

\bibitem[CLP]{key-43-1-1}Y. COLIN DE VERDI\`ERE, M. LOMBARDI \&
J. POLLET, \textit{The microlocal Landau-Zener formula,} Ann. IHP,
71 : 95-127, 1999.

\bibitem[Co-Pa3]{key-43-1}Y. COLIN DE VERDI\`ERE \& B. PARISSE,
\textit{Singular Bohr-Sommerfeld rules,} Commun. Math. Phys, 205 :
459-500, 2000.

\bibitem[Col]{key-3-1}Y. COLIN DE VERDI\`ERE, \textit{Bohr-Sommerfeld
rules to all orders} , Henri Poincaré Acta, 6, 925-936, 2005.

\bibitem[Co-Ro]{key-47}M. COMBESCURE \& D. ROBERT, \textit{A phase
study of the quantum Loschmidt Echo in the semi-classical limit},
{[}arXiv : quant-ph0510151{]}, 2005.

\bibitem[DB-Ro]{key-46-1}S. DE BIEVRE \& D. ROBERT, \textit{Semi-classical
propagation on $\log(\hbar)$ time scale}s.Int. Math. Res. Not, No
12, 667-696, 2003. 

\bibitem[He-Ro]{key-70-1}B. HELFFER \& D. ROBERT, \textit{Puit de
potentiel généralisé et asymptotique semi-classique,} Annales de l'IHP
Physique Théorique 41(3):291-331, 1984.

\bibitem[Lab1]{key-71-2}O. LABL\'EE, \textit{Spectre du laplacien
et de l'op\'erateur de Schr\"{o}dinger sur une vari\'et\'e : de la g\'eom\'etrie
spectrale \`a l'analyse semi-classique,} Gazette des Math\'ematiciens,
Soci\'et\'e Math\'ematique de France :116, 2008.

\bibitem[Lab2]{key-100-1}O. LABL\'EE, \textit{Autour de la dynamique
semi-classique de certains syst\`emes int\'egrables}. Th\`ese de doctorat
en math\'ematiques, Universit\'e de Grenoble 1, 2009.

\bibitem[Lab3]{key-100-1}O. LABL\'EE, \textit{Sur le spectre semi-classique
d'un syst\`eme int\'egrable de dimension 1 autour d'une singularit\'e hyperbolique}.
Annales de la facult\'e des sciences de Toulouse, Math\'ematiques, vol.
19, 1., pp. 85-123, 2010.

\bibitem[Lab4]{key-100-1-1}O. LABL\'EE, \textit{{}``Quantum revivals in integrable systems with two degrees of freedom : the torus case'', }(pre-print), 2010.

\bibitem[LAS]{key-8-1-1}C. LEICHTLE, I. Sh. AVERBUKH \& W. SCHLEICH,
\textit{Multilevel quantum beats : An analytical approach..} Phys.
Rev. A, 54, pp. 5299-5312, 1996.

\bibitem[Pau1]{key-110-1}T. PAUL, \textit{Echelles de temps pour
l'\'evolution quantique \`a petite constante de planck}, Séminaire X-EDP,
2007.

\bibitem[Pau2]{key-110-1-1}T. PAUL, \textit{Reconstruction and non-reconstruction
of wave packets}, Preprint, 2008.

\bibitem[Pau3]{key-110-1-2}T. PAUL, \textit{Reconstruction of wave
packets on a hyperbolic trajectory, }Preprint, 2008.

\bibitem[Ram]{key-73-1}T. RAMOND, \textit{Semiclassical study of
quantum scattering on the line,} Comm. Math. Phys., 177, p.221-254,
1996.

\bibitem[Rob]{key-56-2}D. ROBERT, \textit{Revivals of wave packets
and Bohr-Sommerfeld quantization rules, }Adventures in mathematical
physics, 219--235, Contemp. Math., 447, Amer. Math. Soc., Providence,
RI, 2007.

\bibitem[Robi1]{key-37}R. W. ROBINNET, \textit{Quantum wave packet
revivals}, Physics Reports. 392, 1-119, 2005.

\bibitem[Robi2]{key-37}R. W. ROBINNET, \textit{Wave packets revivals
and quasirevivals in one-dimesionnal power law potentials}, Journal
of Mathematical Physics, Volume 41(4), 2000.

\bibitem[VuN]{key-73}S. V\~U NG\d OC, \textit{Systèmes int\'egrables
semi-classiques: du local au global,} Panoramas et synthèses 22, SMF,
2006.

\end{thebibliography}
\end{document}